\newcommand{\dd}{\mathrm{d}}
\newcommand{\ffi}{\varphi}
\newcommand{\ep}{\varepsilon}
\newcommand{\vv}{\boldsymbol{v}}
\newcommand{\ww}{\boldsymbol{w}}
\newcommand{\rr}{\boldsymbol{r}}
\newcommand{\uu}{\boldsymbol{u}}
\newcommand{\cchi}{\boldsymbol{\chi}}
\newcommand{\Rr}{\boldsymbol{R}}
\newcommand{\Qq}{\boldsymbol{Q}}
\def\N{\mathbb{N}}
\def\R{\mathbb{R}}
\def\L{\textnormal{L}}
\def\P{\textnormal{P}}
\def\W{\textnormal{W}}
\def\E{\textnormal{E}}
\def\H{\textnormal{H}}
\def\V{\textnormal{V}}
\def\pa{\partial}
\def\var{\varepsilon}
\def\Id{\textnormal{Id}}
\newtheorem{Theo}{Theorem}[section]
\newtheorem{lem}[Theo]{Lemma}
\newtheorem{Propo}[Theo]{Proposition}
\newtheorem{definition}[Theo]{Definition}
\begin{document}


\begin{center}
\Large{Entropy, Duality and Cross Diffusion}
 \end{center}
\bigskip


\medskip
\centerline{\scshape L. Desvillettes}
\medskip
{\footnotesize
  \centerline{CMLA, ENS Cachan \& CNRS}
  \centerline{61 Av. du Pdt. Wilson, 94235 Cachan Cedex, France}
\centerline{E-mail : desville@cmla.ens-cachan.fr}}
\bigskip

\centerline{\scshape Th. Lepoutre}
\medskip
{\footnotesize
\centerline{INRIA Rh\^one Alpes}
\centerline{Project-team DRACULA}
\centerline{Universit\'e de Lyon}
\centerline{CNRS UMR 5208}
\centerline{Universit\'e Lyon 1}
\centerline{Institut Camille Jordan}
\centerline{43 blvd. du 11 novembre 1918}
\centerline{F-69622 Villeurbanne cedex France}
\centerline{E-mail : thomas.lepoutre@inria.fr}}
\bigskip

\centerline{\scshape A. Moussa}
\medskip
{\footnotesize
\centerline{UPMC Université Paris 06 \& CNRS}
\centerline{UMR 7598, LJLL, F-75005, Paris, France}
\centerline{\& INRIA Paris-Rocquencourt, Équipe-Projet REO}
\centerline{BP 105, F-78153 Le Chesnay Cedex, France}
\centerline{E-mail : moussa@ann.jussieu.fr}
\bigskip
\centerline{\today}
\medskip






\begin{abstract}
This paper is devoted to the use of the entropy and duality methods for the existence theory of 
reaction-cross diffusion systems consisting of two equations, in any dimension of space. Those systems
appear in population dynamics when the diffusion rates of individuals of two species depend on the concentration of individuals of the same species (self-diffusion), or of the other species (cross diffusion). 
\end{abstract}

\section{Introduction}\label{sec:system}

We are interested in populations consisting of individuals belonging to two distinct species (or two classes in the same species) of (typically) animals, and interacting through diffusion and competition.
\par
The unknowns are the concentration (number density) of individuals of the first species $u_1:=u_1(t,x)\ge0$ and of the second species $u_2 := u_2(t,x)\ge 0$. 
\par

In absence of competition, the
respective populations $u_1$ and $u_2$ would grow at a (strictly positive) rate $r_1$ and $r_2$.
\par 
The competition is taken into account through logistic-type terms, in such a way that the growth rate
becomes $r_1-s_{11}(u_1)-s_{12}(u_2)$ for the first species, and $r_2-s_{22}(u_2)-s_{21}(u_1)$ for
the second one.  The (nonnegative) terms $s_{11}(u_1)$ and $s_{22}(u_2)$ are called intraspecific competition, while
the (nonnegative) terms $s_{12}(u_2)$ and $s_{21}(u_1)$ are by definition the interspecific competition.
Since we are interested here in situations when there is no cooperation (or predator-prey type interaction), we shall assume in the sequel that all functions $s_{ij}$ are nonnegative and increasing (see \cite{Murray}, p.94).
\par
We also assume that the individuals of the two species diffuse with a rate that depends on the
number density of both species: we denote $d_{11}(u_1) + a_{12}(u_2)$ the diffusion rate of the first
species, and $d_{22}(u_2) + a_{21}(u_1)$ the diffusion rate of the second species. The diffusion terms
related to $d_{11}(u_1)$ and $d_{22}(u_2)$ are called self diffusion, those related to $a_{12}(u_2)$ and
$a_{21}(u_1)$ are called cross diffusion. We are interested in this paper in the case when all the functions $d_{ii}$, $a_{ij}$ are nondecreasing (that is, all individuals try to leave the points where the
competition is highest). This type of model were introduced in \cite{Shigesada1979}. 
\par
In the sequel, we systematically denote 
\begin{equation}
\label{eq:aii_dii}
 a_{ii}(u_i) = u_i\, d_{ii}(u_i).
\end{equation}
 We are led to write down the following system (on $\R_+\times\Omega$, 
where $\Omega$ is a smooth bounded open subset of $\R^N$ with outward unit normal vector $n(x)$ at point
$x\in\pa\Omega$):
\begin{align}\label{eq:system:total1}
 \partial_t u_1 - \Delta\big[a_{11}(u_1)+u_1a_{12}(u_2)\big]
 &= u_1\big(r_1-s_{11}(u_1)-s_{12}(u_2)\big):=R_{12}(u_1,u_2), \\
\label{eq:system:total2} \partial_t u_2 - \Delta\big[a_{22}(u_2)+u_2a_{21}(u_1)\big]
 &= u_2\big(r_2-s_{22}(u_2)-s_{21}(u_1)\big):=R_{21}(u_2,u_1),
\end{align}
with the homogeneous Neumann boundary conditions 
\begin{equation}\label{HN}
\nabla u_1 \cdot n(x) = \nabla u_2 \cdot n(x) = 0
 \quad {\hbox{ on }} \quad  \R_+\times\partial\Omega,
\end{equation}
and the initial condition 
\begin{equation}\label{CI}
 u_1 (0, \cdot) = u_1^{in}\ge 0,  \qquad u_2 (0, \cdot) = u_2^{in}\ge 0.
\end{equation}
  The case treated in this paper corresponds to a situation where the reaction terms
 are strictly subquadratic or dominated by the self diffusion, and where the cross diffusions are
subquadratic. Extensions to other cases with faster growth of reaction terms or cross diffusion pressure will be studied in a future work. 
\bigskip

 We detail below the set of mathematical assumptions that will be imposed on the coefficients 
 $a_{ij}$, $s_{ij}$, and which correspond to the case described above:

\begin{itemize}
\item[\textbf{H1}] For $i,j \in\{1,2\}$, $r_i\in \R_+$ and $s_{ij}$ is a nonnegative continuous  function on $\R_+$ which is either strictly sublinear: $\lim_{x \to +\infty} \frac{s_{ij}(x)}{x} = 0$; or
dominated by the self diffusion in the following sense: $\lim_{x \to +\infty} \frac{s_{ii}(x)}{x\, d_{ii}(x)} = 0$ and (for $i\not=j$) $\lim_{x \to +\infty} \frac{s_{ij}(x)}{x\, \sqrt{d_{jj}(x)  + a_{ij}(x)} } = 0$; 
\item[\textbf{H2}] For $i\neq j\in\{1,2\}$, $a_{ij}$ is continuous on $\R_+$ and belongs to $\mathscr{C}^2(]0, +\infty[)$. It is also nonnegative, nondecreasing, concave and vanishes at point $0$. Furthermore, there exists $\alpha\in]0,1[$, $C>0$,
such that
\begin{align*}
\forall x\in ]0, +\infty[,\quad x^\alpha a_{ij}'(x) \geq C.
\end{align*}
\item[\textbf{H3}] The self diffusion rate $d_{ii}$ (recall notation (\ref{eq:aii_dii})) is
continuous on $\R_+$ and belongs to $\mathscr{C}^1(]0, +\infty[)$. It is also nonnegative, nondecreasing 
and such that $d_{ii}(0) >0$ (note 
 in particular that $a_{ii}'$ is bounded below by a strictly positive constant). In the proof of our main Theorem, we
 use $d_{ii}\ge 1$ for the sake of readability.
\end{itemize}
 The union of the assumptions \textbf{H1}, \textnormal{\textbf{H2}} and \textnormal{\textbf{H3}} on the parameters [for $i,j \in \{1,2\}$] $r_i$, $s_{ij}$, $a_{ij}$ (and thus $d_{ii}$) will be called in the sequel the \textnormal{\textbf{H}} assumptions.

\medskip

Since we consider the homogeneous Neumann boundary condition, it is useful to introduce the following notation :
\begin{definition}\label{defineu}
 for any space of functions defined on $\Omega$  whose gradient has a well-defined trace on the boundary $\partial\Omega$ (such as $\mathscr{C}^\infty(\overline{\Omega})$ or $\H^2(\Omega)$ for instance), we add the subscript $\nu$ (the former spaces hence become $\mathscr{C}^\infty_\nu(\overline{\Omega})$ or $\H^2_\nu(\Omega)$) to describe the subspace of functions satisfying the homogeneous Neumann boundary condition.
\end{definition}
 \bigskip
 
 Our main Theorem reads:
 
\begin{Theo}
\label{theo:theo}
Let $\Omega$ be a smooth ($\mathscr{C}^2$) bounded open subset of $\R^N$ ($N\ge 1$). Let $\uu^0:=(u_1^{in},u_2^{in})\in \L^2(\Omega)^2$ be a couple of nonnegative functions and assume that assumptions \textnormal{\textbf{H}} are satisfied on the coefficients of the system. 
\par 
Then, for any $T>0$, there exists a couple $\uu:=(u_1,u_2)$ of nonnegative weak solutions
to \eqref{eq:system:total1}-- \eqref{eq:system:total2} -- \eqref{HN} on $[0,T]$
in the following sense: for $i=1,2$ ($j\neq i$),
\begin{equation}\label{eq:estimationL2}
 \int_0^T \int_{\Omega} \bigg[d_{ii}(u_i(t,x)) + a_{ij}(u_j(t,x))\bigg]
\, |u_i(t,x)|^2 \, \dd x\,\dd t < +\infty, 
\end{equation}
and for any $\theta \in {\mathscr{D}}([0, T[; \mathscr{C}_\nu^{\infty}(\overline{\Omega}))$, we have the weak formulation   
\begin{align*}
 & - \int_{\Omega} u_1^{in}(x)\, \theta(0,x)\, \dd x - \int_0^{\infty}\int_{\Omega} u_1(t,x)  \,\partial_t \theta(t,x)\, \dd x\,\dd t \\
 & - \int_0^{\infty}\int_{\Omega} \Delta \theta(t,x)\,  \Big[ a_{11}(u_1(t,x)) + u_1(t,x)\, a_{12}(u_2(t,x))\Big]\, \dd x\,\dd t \\
= &\int_0^{\infty}\int_{\Omega} R_{12}(u_1(t,x),u_2(t,x))\, \dd x\,\dd t,
\end{align*}
and
\begin{align*}
 & - \int_{\Omega} u_2^{in}(x)\, \theta(0,x)\, \dd x - \int_0^{\infty}\int_{\Omega} u_2(t,x) \,\partial_t \theta(t,x)\, \dd x\,\dd t \\
 & - \int_0^{\infty}\int_{\Omega} \Delta \theta(t,x)\,  \Big[ a_{22}(u_2(t,x)) + u_2(t,x)\, a_{21}(u_1(t,x))\Big]\, \dd x\,\dd t \\
= &\int_0^{\infty}\int_{\Omega} R_{21}(u_1(t,x),u_2(t,x))\, \dd x\,\dd t.
\end{align*}
Moreover, denoting $Q_T:=[0,T]\times\Omega$, the following bounds hold:
\begin{align*}
&\left\| \frac{1}{u_1}{a_{21}'}(u_1)\nabla u_1 \right\|_{\L^{2}(Q_T)} + \left\|\frac{1}{u_2}{a_{12}'}(u_2) \nabla u_2 \right\|_{\L^{2}(Q_T)} \leq K_T ( 1+ \|\uu^0\|_{\L^2(\Omega)}),\\
&\left\| \nabla  \sqrt{a_{21}(u_1)} \right\|_{\L^{2}(Q_T)} + \left\|\nabla  \sqrt{a_{12}(u_2)}\right\|_{\L^{2}(Q_T)} + \left\|\nabla  \sqrt{a_{21}(u_1)a_{12}(u_2)}\right\|_{\L^{2}(Q_T)}  \leq K_T ( 1+ \|\uu^0\|_{\L^2(\Omega)}),
\end{align*}
for some positive constant $K_T$ depending only on $T$ and the data of the equations ($a_{ij}$, $r_i$, $s_{ij}$).
\end{Theo}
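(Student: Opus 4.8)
The plan is to construct the solution by a double approximation and to pass to the limit using entropy estimates coupled with a duality argument. First I would regularize the system: replace the degenerate cross-diffusion coefficients by strictly elliptic ones (for instance add $\varepsilon\Delta u_i$ to each equation, or equivalently work with $a_{ij}^\varepsilon(x)=a_{ij}(x)+\varepsilon x$ and a truncation of the reaction terms), and smooth the initial data, so that standard quasilinear parabolic theory (or a Schauder/Leray fixed point on the mild formulation) yields global nonnegative smooth solutions $\uu^\varepsilon=(u_1^\varepsilon,u_2^\varepsilon)$ on $[0,T]$; nonnegativity follows from the structure of the reaction and diffusion terms (the diffusion flux of $u_i$ vanishes where $u_i=0$ and $R_{12},R_{21}$ are quasi-positive). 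The bulk of the work is then a set of uniform-in-$\varepsilon$ a priori estimates.

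The key estimates come in two families. The \textbf{entropy estimate}: one looks for a Lyapunov functional of the form $E(\uu)=\int_\Omega [h_1(u_1)+h_2(u_2)]\,\dd x$ adapted to the cross-diffusion structure of \eqref{eq:system:total1}--\eqref{eq:system:total2}; differentiating in time and using the boundary conditions produces a nonnegative dissipation term controlling quantities like $\int_\Omega a_{11}'(u_1)|\nabla u_1|^2$, $\int_\Omega a_{ij}'(u_j)|\nabla u_j|^2$ weighted appropriately, and cross terms, while the reaction contribution is absorbed thanks to the sublinearity/domination in \textbf{H1} and the lower bound $x^\alpha a_{ij}'(x)\ge C$ in \textbf{H2} (which guarantees $a_{ij}(x)\gtrsim x^{1-\alpha}$, so square roots of $a_{ij}$ have controlled gradients). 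Concretely this gives the bounds on $\|\nabla\sqrt{a_{ij}(u_j)}\|_{\L^2(Q_T)}$, $\|\frac1{u_j}a_{ij}'(u_j)\nabla u_j\|_{\L^2(Q_T)}$ and the mixed term $\|\nabla\sqrt{a_{21}(u_1)a_{12}(u_2)}\|_{\L^2(Q_T)}$ by a product-rule/Cauchy--Schwarz manipulation combined with the single-species gradient bounds. The \textbf{duality estimate}: summing the two equations one sees that $w=u_1+u_2$ satisfies $\partial_t w-\Delta(A)=R$ with $A=a_{11}(u_1)+u_1a_{12}(u_2)+a_{22}(u_2)+u_2a_{21}(u_1)\ge \mu w$ for some $\mu>0$ (by \textbf{H3}, $d_{ii}\ge1$); testing against the solution of a dual backward heat equation $-\partial_t\varphi-\frac{A}{w}\Delta\varphi=\Theta$ and using the maximum principle for the dual problem yields $\iint_{Q_T}\frac{A}{w}\,w^2\,\dd x\,\dd t\le C(1+\|\uu^0\|_{\L^2}^2)$, which is exactly the $\L^2$-type bound \eqref{eq:estimationL2} and in particular gives $u_i\in\L^2(Q_T)$, hence $a_{ij}(u_j)u_i\in\L^1$ and the reaction terms in $\L^1$. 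I would also record the $\L^2(\Omega)$ bound on $\uu^\varepsilon$ uniform in time via a direct energy estimate on $w$.

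With these bounds in hand, I would extract weakly/strongly convergent subsequences. The duality estimate gives $u_i^\varepsilon$ bounded in $\L^2(Q_T)$; the entropy dissipation gives compactness in space of suitable nonlinear functions of $u_i^\varepsilon$ (e.g. $\sqrt{a_{ij}(u_j^\varepsilon)}$ bounded in $\L^2_t\H^1_x$), and the equations give control of the time derivatives in some negative Sobolev space, so an Aubin--Lions--Simon argument yields strong $\L^2$ (or $\L^p$, $p<2$) convergence of $u_i^\varepsilon$ along a subsequence, hence a.e. convergence. Continuity of $a_{ij}$, $d_{ii}$, $s_{ij}$ together with uniform integrability (from the $\L^2$ bound and the subquadratic/domination assumptions, giving equi-integrability of the diffusion pressures $a_{ii}(u_i^\varepsilon)+u_i^\varepsilon a_{ij}(u_j^\varepsilon)$ and of the reaction terms) then lets me pass to the limit in the weak formulation tested against $\theta\in\mathscr{D}([0,T[;\mathscr{C}^\infty_\nu(\overline\Omega))$, and the a priori bounds pass to the limit by weak lower semicontinuity of norms, giving the stated inequalities with $K_T$.

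The main obstacle I expect is twofold and intertwined: (i) \emph{finding the right entropy functional} $h_1,h_2$ whose dissipation genuinely controls the mixed cross-diffusion gradient term $\nabla\sqrt{a_{21}(u_1)a_{12}(u_2)}$ — this is the delicate algebraic heart, since the off-diagonal structure $\Delta(u_i a_{ij}(u_j))$ does not come from a gradient flow in general and one must exploit the specific product form together with \textbf{H2} (concavity, the $x^\alpha a_{ij}'\ge C$ lower bound) to close the computation and absorb the error terms; and (ii) \emph{upgrading weak to strong convergence of the nonlinear fluxes}, because the equation for $u_i$ alone has a diffusion coefficient depending on the \emph{other} component, so one cannot directly invoke a single nonlinear parabolic compactness result — the fix is to combine the duality bound (which pins down $u_i$ in $\L^2$ rather than merely some nonlinear function of it) with the entropy-derived spatial regularity and a careful Aubin--Lions argument, and this coupling is where most of the technical care will go.
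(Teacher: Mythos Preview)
Your overall strategy---entropy dissipation plus a duality (Pierre--Schmitt) estimate, then Aubin--Lions compactness to pass to the limit---is exactly the paper's strategy, and you correctly flag the algebraic identification of the entropy as the heart of the matter. Two points of comparison are worth making.

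\textbf{The approximation scheme is where you are too optimistic.} You write that after adding $\varepsilon\Delta u_i$ (or setting $a_{ij}^\varepsilon=a_{ij}+\varepsilon x$) and truncating the reactions, ``standard quasilinear parabolic theory \dots\ yields global nonnegative smooth solutions''. This is precisely what does \emph{not} go through smoothly: the $\varepsilon$-regularized system is still a genuine cross-diffusion system (the diffusion of $u_1$ depends on $u_2$ and conversely), so Amann's local theory may give you short-time solutions but global existence already needs the very a~priori bounds you are trying to prove. The paper therefore does not attempt to produce a smooth regularized solution in continuous time; instead it layers three approximations---an implicit Euler time step $\tau$, a Galerkin projection onto finite-dimensional spaces $V_n\subset \mathscr{C}^\infty_\nu(\overline\Omega)$, and the $\varepsilon$-perturbation (which also includes a term $\varepsilon(w_i-\Delta w_i)$ with $w_i=\varphi_i^\varepsilon(u_i)$ to make the fixed-point map coercive)---and solves each time step by a Leray--Schauder argument on the Galerkin space, using the discrete entropy inequality as the a~priori bound. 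This is where most of the technical length lies, and your proposal would need something equivalent.

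\textbf{The entropy and the duality are handled slightly differently.} The paper's entropy is completely explicit: $h_i=\psi_i$ with $\psi_i'=\varphi_i$, $\varphi_i'(x)=a_{ji}'(x)/x$; after the change of variable $w_i=\varphi_i(u_i)$ the system becomes $\partial_t \uu=\mathrm{div}(A(\uu)\nabla\ww)+\Rr$ with $A$ symmetric, and the concavity of $a_{ij}$ ($i\neq j$) is exactly what makes the cross part of $A$ nonnegative and yields the three gradient bounds you list. For the duality, you sum the two equations and apply the lemma to $w=u_1+u_2$; the paper instead runs the (discrete-in-time) dual problem on each equation separately, with diffusion rate $b=d_{ii}^\varepsilon(u_i)+a_{ij}^\varepsilon(u_j)$. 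Your summed version is legitimate (and somewhat cleaner) at the formal level, since $A\ge u_1+u_2$ by $d_{ii}\ge 1$; the paper's per-equation version is dictated by its approximation, where the extra $\varepsilon(w_i-\Delta w_i)$ term sits inside each equation and must be controlled individually. Either route yields \eqref{eq:estimationL2}.
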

Note that all the terms in the weak formulation above are well-defined thanks to assumptions \textnormal{\textbf{H}}. Indeed, remembering that $d_{ii}$ is bounded below, estimate \eqref{eq:estimationL2} implies $u_i\in\L^2([0,T]\times \Omega)$ so that the reaction terms are integrable (thanks to \textbf{H1}). Using the identity
$$  s_{12}(u_2)\,u_1 = \left(\frac{s_{12}(u_2)}{u_2\, \sqrt{d_{22}(u_1) + a_{12}(u_2)}} \right)\,
\,  \left( \frac{u_2\,\sqrt{d_{22}(u_2) + a_{12}(u_2)}}{\sqrt{a_{12}(u_2)} } \right)\,\,u_1\,\sqrt{a_{12}(u_2)} , $$
 the terms coming out of cross diffusion are also well defined due to concavity assumptions \textnormal{\textbf{H2}}. Those coming out of self diffusion are integrable thanks to (\ref{eq:estimationL2}).
\bigskip

We now comment the assumptions on the initial data and coefficients of system \eqref{eq:system:total1}--\eqref{eq:system:total2}. The requirement that $\uu^0\in (\L^2(\Omega))^2$ can certainly be relaxed to $\uu^0\in \textnormal{H}^{-1}(\Omega)^2$ and $\psi_i(u_i^{in})\in \L^1(\Omega)$, with $\psi_i$ defined later in Definition~\ref{defips} (this should be enough to keep estimate \eqref{eq:estimationL2}). It is most likely possible to relax 
assumption \textbf{H1}, provided that \textnormal{\textbf{H2}} and \textnormal{\textbf{H3}} are reinforced. 
Finally, there is some hope of treating the special case when reaction terms are exactly quadratic,
 thanks to recent improvements in the theory of duality Lemmas \cite{CDFO}. 
\smallskip
 
We restricted in this paper our study to the case when both $a_{ij}$, $i\neq j$ are concave, whereas our methods should adapt in the case when one of them is concave and the
other one is convex. The corresponding theory is then quite different and will be left to a future work.
Note that when both $a_{ij}$, $i\neq j$ are convex, our feeling is that existence of weak global solutions does
not hold in general.
\smallskip

Our opinion is that for systems involving cross diffusion and consisting of more than two equations, the type of Lyapounov functional that we build in the sequel can exist only for a very small class of cross diffusion terms 
(that is, strong algebraic constraints have to be assumed on the cross diffusion coefficients).
\smallskip

Finally, we do not treat the case when one cross diffusion term is missing (sometimes called the ``triangular case"), since the Lyapounov functional that we introduce in the sequel degenerates in this case.
\bigskip

Let us also explain the meaning of assumptions \textnormal{\textbf{H}} when all the functions appearing in \eqref{eq:system:total1}--\eqref{eq:system:total2} are of the form $x \mapsto x^q$ (or $x \mapsto x + x^q$ for the self diffusion), with $q\in \R$. In this case, if $s_{ij}(x) = S_{ij}\, x^{\sigma_{ij}}$ and $a_{ij}(x) = D_i\, x\, {\delta_{ij}} + A_{ij}\, x^{\alpha_{ij}}$, with $S_{ij}, A_{ij}, D_i>0$ and  $\sigma_{ij}, \alpha_{ij} \in \R$, those assumptions become:
 $$ \forall i=1,2, \qquad  0 \le\sigma_{ii} < \sup(1,\alpha_{ii}); $$
 $$  \forall i \neq j, \qquad 0 \le \sigma_{ij} < \sup\left( \frac{\alpha_{jj} + 1}2, 1 + \frac{\alpha_{ij}}2 \,\right), $$
$$  \qquad 0 < \alpha_{ij} <1 . $$
 \par
  As a consequence, our Theorem provides existence of global weak solutions for systems like 
  \begin{align}\label{eqpa1}
 \partial_t u_1 - \Delta\big[ (D_1 + A_{11}\,u_1^{\alpha} + A_{12}\,u_2^{\beta})\, u_1\big]
 &= u_1\big(r_1-S_{11}\,u_1-S_{12}\,u_2\big), \\
\label{eqpa2} \partial_t u_2 - \Delta\big[ (D_2 + A_{21}\, u_1^{\gamma} + A_{22}\,u_2^{\delta})\, u_2\big]
 &= u_2\big(r_2-S_{21}\,u_1-S_{22}\,u_2\big),
\end{align}
 with $0\le \alpha, \delta<1$; $\beta,\gamma>0$, and $r_i, D_i, A_{ij}, S_{ij}>0$.
 \bigskip
 
 Let us now describe how our work fits in the existing literature.

The question of local and global existence of classical solutions of systems like \eqref{eq:system:total1}--\eqref{eq:system:total2} was treated in many particular cases. Most of them deal either with the case when the system is in fact parabolic (that is, when the diffusion matrix is elliptic), which amounts to assume that self diffusion is dominant w.r.t.
cross diffusion (for instance when $8\,A_{11}\geq A_{12}, 8\,A_{22}\geq A_{21}$ in eq. (\ref{eqpa1}), (\ref{eqpa2}) with
$\alpha=\beta=\gamma=\delta =1$, cf. \cite{Yagi}), or in the triangular case (when $a_{21}(u_1)=0$, cf. \cite{LNW}). The question of local existence is usually treated using Amann's theorem \cite{Amann90b}, and extension to global existence requires additional structure to preserve boundedness of solutions see \cite{Choi2004,Deuring1987} for instance.  
\smallskip

 Our work is concerned with situations that are neither parabolic, nor triangular.
 It extends to a very large class of problems the result of \cite{Chen2006}, dedicated to the 
cross diffusion model for population dynamics introduced by Shigesada, Kawasaki and Teramoto in \cite{Shigesada1979}, where
 the cross diffusions have a linear form, that is $\beta=\gamma=1$ in eq. (\ref{eqpa1}), (\ref{eqpa2}). 
Note that this system can be seen as a limiting case of the equations that we treat.
 \smallskip
 
 Our results rely on two main ingredients: entropy structure and duality Lemmas.
We show that our systems possess a hidden entropy-like structure, strongly reminiscent of the 
entropy structure exhibited in \cite{Chen2006}. 
 In general, this structure however gives less estimates than in \cite{Chen2006}. We therefore need
another ingredient in order to recover existence of weak global solutions, namely duality Lemmas: we
 recall that duality Lemmas enable to recover $\L^2$  type estimate for solutions
 to linear singular parabolic equations (with variable coefficients) when the diffusion rate is inside the Laplacian. This is how estimate \eqref{eq:estimationL2} is derived.
\smallskip

 For the use of an underlying entropy structure, its link with symmetrization, and its applications to existence of weak solutions, we refer to \cite{Kawa_Shiz,Degond_Sym}. The possibility to use such a structure in the case of cross diffusion was  first noticed in \cite{Galiano_Num_Math}, and exploited in \cite{Chen2006,Chen2004,Jungel_Stelzer,Hitmeir}.
  The duality estimate that we use comes from \cite{PiSc}, and was applied together with entropy methods in the framework of reaction-diffusion systems in \cite{Desvillettes2007a}. Is was also applied in the framework of cross diffusion or similar models in \cite{Lepoutre_JMPA, BoPiRo, PierreD}.  
\smallskip

 We finally quote some works dealing with other aspects of cross diffusion models. For modeling issues, we refer
to \cite{Mimura_cross, Desvillettes-Conforto,BoPiRo,Lepoutre_JMPA}.
For the analysis of equilibria, we cite \cite{Mimura_cross,Lou_Martinez} for instance. 
\bigskip

Unfortunately, as often in papers dealing with cross diffusion, the process of approximation enabling to make the estimates and structures rigorous is quite involved, and gives rise to various difficulties which explain the length and technicality of the proofs.  

 Our paper is structured as follows:  in Section \ref{section2}, we introduce notations which are used in the proof of our main Theorem, especially those related to the Lyapounov functional that we systematically use. We also present some classical lemmas used in the sequel. Then, Section \ref{section3} is devoted to the proof of existence of a solution to a finite-dimensional (Galerkin) approximation of a discrete time version of our (smoothed) system. \emph{A priori} estimates (and their dependence with respect to the various approximations) are provided for such solutions. We let the dimension of the Galerkin approximation go
to infinity in Section \ref{section4}. The duality estimate is presented and proven in Section \ref{section5}. The last section (that is, Section \ref{section6}) is devoted to the relaxation of all remaining 
approximations, which leads to the proof of our main Theorem. 
 
\section{Preliminaries}\label{section2}

\subsection{Entropy structure} \label{sub21}

We first introduce some notations which enable to rewrite our system under a form
in which a Lyapounov functional naturally appears.
\bigskip
 
\begin{definition}\label{defips}
For given cross diffusion parameters $a_{12}$ and $a_{21}$ (satisfying assumption
\textnormal{\textbf{H2}}), we introduce $\ffi_i$, $\psi_i$ (for $i=1,2$), as:
 $$ \ffi_1(x):=\int_{1}^x \frac{a'_{21}(t)}{t}\dd t,
 \,\,\ffi_2(x):=\int_{1}^x \frac{a'_{12}(t)}{t}\dd t, $$
$$
 \psi_i(x):=\int_{1}^x \ffi_i(t)\dd t. $$
We also define
$$ w_i(t,x) := \ffi_i(u_i(t,x)).$$

\end{definition}
 
\vspace{2mm}
One can then rewrite the system in a symmetric form:
\begin{align}\label{eq:symetric}
\partial_t \begin{pmatrix}
            u_1 \\
            u_2
           \end{pmatrix}
- \textnormal{div} \stackrel{:=A(u_1,u_2)}{\overbrace{\begin{pmatrix}
\frac{{a_{11}'}(u_1)+{a_{12}}(u_2)}{{a_{21}'}(u_1)}u_1 & u_1 u_2\\
u_1 u_2 & \frac{{a_{22}'}(u_2)+{a_{21}}(u_1)}{{a_{12}'}(u_2)}u_2
\end{pmatrix}}}
\begin{pmatrix}
 \nabla w_1 \\\nabla w_2
\end{pmatrix}
=\begin{pmatrix}
 R_{12}(u_1,u_2) \\
R_{21}(u_2,u_1)
 \end{pmatrix}.
\end{align}
 The terms $\nabla w_i$ have to be considered as scalars for the matrix product, the divergence being understood line by line (after the matrix product).\vspace{2mm}\\
 Multiplying the equations of the system by $w_1,w_2$ and integrating in space we 
 obtain formally the entropy identity
 \begin{equation}
 \label{eq:entropy}
 \frac{d}{dt}\int_\Omega \left(\psi_1(u_1)+\psi_2(u_2) \right) 
+\int_\Omega (\nabla w_1,\nabla w_2)A(u_1,u_2)
\begin{pmatrix}\nabla w_1\\ \nabla w_2\end{pmatrix}=  (w_1, w_2)\begin{pmatrix} R_{12}\\  R_{21}\end{pmatrix}.
 \end{equation}
 
\subsection{Properties of the symmetric matrix} \label{sub22}

For $u_1,u_2>0$, $\uu:=(u_1,u_2)$, the matrix $A(\uu)$ is defined by
\begin{align} \label{mat}
\displaystyle A(u_1,u_2):=\begin{pmatrix}
\frac{{a_{11}'}(u_1)+{a_{12}}(u_2)}{{a_{21}'}(u_1)}u_1 & u_1 u_2\\
u_1 u_2 & \frac{{a_{22}'}(u_2)+{a_{21}}(u_1)}{{a_{12}'}(u_2)}u_2
\end{pmatrix},
\end{align}
and we denote the associate quadratic form $\Qq(\uu)$. For a function $\uu=(u_1,u_2):\Omega\rightarrow ]0, +\infty[^2$, we defined in the previous subsection $\ww=(w_1,w_2)=(\ffi_1(u_1),\ffi_2(u_2))$. We will have to deal with the expression
\begin{align*}
\Qq(\uu)(\nabla \ww),
\end{align*}  
 since this term naturally appears in the entropy estimate. We establish the following proposition, that will be useful later.\vspace{2mm}\\
\begin{Propo}\label{prop:propertyA}
  \noindent Under the \emph{\textbf{H}} assumptions, the application $A:{]0,+\infty[}^2\rightarrow\mathcal{M}_2(\R)$ (defined by (\ref{mat})) belongs to $\mathscr{C}^0({]0,+\infty[}^2)$ and takes its values in $S_2^{++}(\R)$ (space of strictly positive 
symmetric matrices). We have furthermore the two following estimates on the quadratic form $\Qq$
\begin{align}
\label{ineq:propertyA1} \Qq(\uu)(\nabla \ww)  &\geq \frac{1}{u_1}{a_{21}'}(u_1)|\nabla u_1|^2 + \frac{1}{u_2}{a_{12}'}(u_2)|\nabla u_2|^2,\\
\label{ineq:propertyA2}\Qq(\uu)(\nabla\ww)&\geq  4\,\Big\{ \left|\nabla  \sqrt{a_{21}(u_1)}\right|^2 + \left|\nabla  \sqrt{a_{12}(u_2)}\right|^2 + \left|\nabla  \sqrt{a_{21}(u_1)a_{12}(u_2)}\right|^2\Big\}.
\end{align} 
\end{Propo}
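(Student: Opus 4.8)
The plan is to compute the quadratic form $\Qq(\uu)(\nabla\ww)$ explicitly in terms of $\nabla u_1,\nabla u_2$, using the definitions $w_i=\ffi_i(u_i)$ and $\ffi_1'(x)=a_{21}'(x)/x$, $\ffi_2'(x)=a_{12}'(x)/x$, so that
\[
\nabla w_1 = \frac{a_{21}'(u_1)}{u_1}\,\nabla u_1,\qquad \nabla w_2 = \frac{a_{12}'(u_2)}{u_2}\,\nabla u_2.
\]
Plugging these into $(\nabla w_1,\nabla w_2)\,A(\uu)\,(\nabla w_1,\nabla w_2)^{\mathrm T}$ and using the specific entries of $A$ in \eqref{mat}, the off-diagonal terms produce a cross term $2\,a_{21}'(u_1)\,a_{12}'(u_2)\,\nabla u_1\cdot\nabla u_2$, while the diagonal terms give $\frac{1}{u_1}\big(a_{11}'(u_1)+a_{12}(u_2)\big)\,a_{21}'(u_1)\,|\nabla u_1|^2$ and the symmetric expression in the second variable. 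The first task is to record this identity cleanly; everything afterwards is a matter of bounding it from below. For positivity of $A$ (hence of $\Qq$), I would note that continuity on $]0,+\infty[^2$ is immediate from \textbf{H2}--\textbf{H3} (the denominators $a_{ij}'$ are continuous and, by \textbf{H2} together with \textbf{H3}, strictly positive on $]0,+\infty[$), the diagonal entries are strictly positive since $a_{11}',a_{22}'\ge 1$ by \textbf{H3} and $a_{12},a_{21}\ge 0$, and the determinant is
\[
\det A(\uu)=\frac{\big(a_{11}'(u_1)+a_{12}(u_2)\big)\big(a_{22}'(u_2)+a_{21}(u_1)\big)}{a_{21}'(u_1)\,a_{12}'(u_2)}\,u_1u_2 - u_1^2u_2^2,
\]
which is $>0$ precisely because expanding the numerator gives $a_{11}'(u_1)a_{22}'(u_2)u_1u_2$ plus strictly positive leftover terms dominating $u_1^2u_2^2$; in fact I expect the clean estimate $\big(a_{11}'(u_1)+a_{12}(u_2)\big)\big(a_{22}'(u_2)+a_{21}(u_1)\big)\ge a_{12}(u_2)a_{21}(u_1)+\dots\ge u_1u_2\,a_{12}'(u_2)a_{21}'(u_1)$ using concavity of $a_{ij}$, which says $x\,a_{ij}'(x)\le a_{ij}(x)$ since $a_{ij}(0)=0$. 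This concavity inequality $x\,a_{ij}'(x)\le a_{ij}(x)$ is the one genuinely structural fact and I would isolate it as a preliminary remark.

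For \eqref{ineq:propertyA1}, I would simply drop the nonnegative pieces: from the identity, $\Qq(\uu)(\nabla\ww)\ge \frac{a_{11}'(u_1)}{u_1}a_{21}'(u_1)|\nabla u_1|^2 + 2a_{21}'(u_1)a_{12}'(u_2)\nabla u_1\cdot\nabla u_2 + \frac{a_{22}'(u_2)}{u_2}a_{12}'(u_2)|\nabla u_2|^2$ plus the terms containing $a_{12}(u_2),a_{21}(u_1)$. Here I would discard the cross-diffusion diagonal contributions and keep the cross term; then using $a_{11}',a_{22}'\ge 1$ the remaining three terms form $\frac{a_{21}'(u_1)}{u_1}|\nabla u_1|^2 + \frac{a_{12}'(u_2)}{u_2}|\nabla u_2|^2$ plus a leftover, and the cross term is handled by Young's inequality against the discarded pieces — or, more simply, one keeps $\frac{a_{12}(u_2)}{u_1}a_{21}'(u_1)|\nabla u_1|^2$ and $\frac{a_{21}(u_1)}{u_2}a_{12}'(u_2)|\nabla u_2|^2$ and uses $2a_{21}'(u_1)a_{12}'(u_2)|\nabla u_1||\nabla u_2|\le \frac{a_{12}(u_2)}{u_1}a_{21}'(u_1)|\nabla u_1|^2 u_1 \frac{a_{21}'(u_1)}{a_{12}(u_2)} + (\dots)$, which is where the concavity bound $u\,a'(u)\le a(u)$ makes the cross term absorbable. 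The cleanest route: write the part of $\Qq$ involving the cross-diffusion terms as $\frac{a_{12}(u_2)a_{21}'(u_1)}{u_1}|\nabla u_1|^2 + 2 a_{21}'(u_1)a_{12}'(u_2)\nabla u_1\cdot\nabla u_2 + \frac{a_{21}(u_1)a_{12}'(u_2)}{u_2}|\nabla u_2|^2$ and check this is a nonnegative quadratic form in $(\nabla u_1,\nabla u_2)$ by verifying its discriminant is $\le 0$ — which again reduces to $a_{12}'(u_2)a_{21}'(u_1)\le \frac{a_{12}(u_2)}{u_2}\cdot\frac{a_{21}(u_1)}{u_1}$, i.e. the product of the two concavity inequalities. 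Then $\Qq(\uu)(\nabla\ww)\ge \frac{a_{11}'(u_1)a_{21}'(u_1)}{u_1}|\nabla u_1|^2 + \frac{a_{22}'(u_2)a_{12}'(u_2)}{u_2}|\nabla u_2|^2 \ge \frac{a_{21}'(u_1)}{u_1}|\nabla u_1|^2 + \frac{a_{12}'(u_2)}{u_2}|\nabla u_2|^2$ using $a_{ii}'\ge 1$, which is exactly \eqref{ineq:propertyA1}.

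For \eqref{ineq:propertyA2}, the strategy is to recognize the three gradient-square terms on the right as chain-rule expressions: $\nabla\sqrt{a_{21}(u_1)}=\frac{a_{21}'(u_1)}{2\sqrt{a_{21}(u_1)}}\nabla u_1$, similarly for $\sqrt{a_{12}(u_2)}$, and $\nabla\sqrt{a_{21}(u_1)a_{12}(u_2)} = \frac12\sqrt{a_{12}(u_2)}\,\frac{a_{21}'(u_1)}{\sqrt{a_{21}(u_1)}}\nabla u_1 + \frac12\sqrt{a_{21}(u_1)}\,\frac{a_{12}'(u_2)}{\sqrt{a_{12}(u_2)}}\nabla u_2$. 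So the right-hand side of \eqref{ineq:propertyA2} multiplied out equals $\frac{a_{21}'(u_1)^2}{a_{21}(u_1)}|\nabla u_1|^2 + \frac{a_{12}'(u_2)^2}{a_{12}(u_2)}|\nabla u_2|^2 + \big|\sqrt{a_{12}(u_2)}\frac{a_{21}'(u_1)}{\sqrt{a_{21}(u_1)}}\nabla u_1 + \sqrt{a_{21}(u_1)}\frac{a_{12}'(u_2)}{\sqrt{a_{12}(u_2)}}\nabla u_2\big|^2$; expanding the last square gives $\frac{a_{12}(u_2)a_{21}'(u_1)^2}{a_{21}(u_1)}|\nabla u_1|^2 + 2 a_{21}'(u_1)a_{12}'(u_2)\nabla u_1\cdot\nabla u_2 + \frac{a_{21}(u_1)a_{12}'(u_2)^2}{a_{12}(u_2)}|\nabla u_2|^2$. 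Comparing term by term with the identity for $\Qq(\uu)(\nabla\ww)$, I need $\frac{a_{11}'(u_1)a_{21}'(u_1)}{u_1}\ge \frac{a_{21}'(u_1)^2}{a_{21}(u_1)}$ (equivalently $a_{11}'(u_1)\ge \frac{u_1 a_{21}'(u_1)}{a_{21}(u_1)}$, which holds since $a_{11}'\ge 1$ and $u\,a_{21}'(u)\le a_{21}(u)$), the same in the other variable, $\frac{a_{12}(u_2)a_{21}'(u_1)}{u_1}\ge \frac{a_{12}(u_2)a_{21}'(u_1)^2}{a_{21}(u_1)}$ (same concavity bound), its symmetric counterpart, and finally the two cross terms match exactly. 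I expect the main obstacle to be purely bookkeeping: keeping the six-or-so terms of the expanded quadratic form organized and invoking the concavity inequality $u\,a_{ij}'(u)\le a_{ij}(u)$ (and $a_{ii}'\ge 1$) in the right place for each one — there is no deep idea beyond the chain-rule reinterpretation of the right-hand sides and that single concavity fact, but one must be careful that every leftover term discarded is genuinely nonnegative and that the cross terms in $\Qq$ and in the target are identical so nothing needs to be absorbed by Young's inequality.
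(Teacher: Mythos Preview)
Your proposal is correct and follows essentially the same route as the paper: the decomposition of $A$ into a strictly positive self-diffusion diagonal part plus a nonnegative cross-diffusion part (whose nonnegativity reduces to the concavity inequality $x\,a_{ij}'(x)\le a_{ij}(x)$), yielding \eqref{ineq:propertyA1} via $a_{ii}'\ge 1$; and for \eqref{ineq:propertyA2}, the direct expansion of $\Qq(\uu)(\nabla\ww)$ followed by the same concavity bound $\tfrac{a_{21}'(u_1)}{u_1}\ge \tfrac{a_{21}'(u_1)^2}{a_{21}(u_1)}$ and the chain-rule identification of the target, with the cross terms matching exactly. Your exploratory false starts aside, the final argument is the paper's.
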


\begin{proof}
The fact that $A \in\mathscr{C}^0(]0, +\infty[^2)$ is an easy consequence of the regularity of the functions $a_{ij}$. As for the strict positiveness of the matrix, we just decompose $A$ between self (clearly strictly positive) and cross diffusion:
\begin{align*}
A(u_1,u_2)=\stackrel{B(u_1,u_2)}{\overbrace{\begin{pmatrix}
\frac{{a_{11}'}(u_1)}{{a_{21}'}(u_1)}u_1  & 0  \\
0 & \frac{{a_{22}'}(u_2)}{{a_{12}'}(u_2)}u_2 
\end{pmatrix}}} 
+ \stackrel{C(u_1,u_2)}{\overbrace{\begin{pmatrix}
\frac{{a_{12}}(u_2)}{{a_{21}'}(u_1)}u_1 & u_1u_2  \\
u_1u_2 & \frac{{a_{21}}(u_1)}{{a_{12}'}(u_2)}u_2
\end{pmatrix}}}.
\end{align*}
We see that due to the assumptions on $a_{12}$ and $a_{21}$, the cross diffusion matrix $C(u_1,u_2)$ is nonnegative. Indeed, for $u_1,u_2>0$
\begin{align*}
\frac{{a_{12}}(u_2)}{{a_{21}'}(u_1)}u_1 \times \frac{{a_{21}}(u_1)}{{a_{12}'}(u_2)}u_2 &\geq u_1 u_2 \times u_1 u_2\\
&\Updownarrow\\
a_{12}(u_2)a_{21}(u_1) &\geq u_1 u_2 {a_{12}'}(u_2){a_{21}'}(u_1),
\end{align*}
which is true by concavity. We get
\begin{align*}
\Qq(\uu)(\nabla \ww) = \big[{}^t\!\,\nabla \ww\big]\big[ A(\uu) \nabla \ww\big] \geq \big[{}^t\!\,\nabla \ww \big]\big[B(\uu) \nabla \ww\big] = \frac{{a_{11}'}(u_1)}{{a_{21}'}(u_1)}u_1|\nabla w_1|^2 + \frac{{a_{22}'}(u_2)}{{a_{12}'}(u_2)}u_2|\nabla w_2|^2.
\end{align*}
Since $w_i = \ffi_i(u_i)$, with $\displaystyle \ffi_i'(x) = \frac{a_{ji}'(x)}{x}$, we end up with
\begin{align*}
\Qq(\uu)(\nabla \ww)  \geq \frac{{a_{11}'}(u_1)}{u_1}{a_{21}'}(u_1)|\nabla u_1|^2 + \frac{{a_{22}'}(u_2)}{u_2}{a_{12}'}(u_2)|\nabla u_2|^2,
\end{align*} 
which, using assumption \textnormal{\textbf{H3}}, leads to the first lower bound \eqref{ineq:propertyA1}. On the other hand, expanding directly $\Qq(\uu)(\nabla \ww)$ with the definitions of $A(\uu)$ and $\ww$, we get
\begin{align*}
\Qq(\uu)(\nabla \ww) &= \big[a_{11}'(u_1)+a_{12}(u_2)\big]\frac{a_{21}'(u_1)}{u_1}|\nabla u_1|^2 + \big[a_{22}'(u_2)+a_{21}(u_1)\big]\frac{a_{12}'(u_2)}{u_2}|\nabla u_2|^2 \\
&+ 2\, a_{21}'(u_1)a_{12}'(u_2)\langle \nabla u_1 , \nabla u_2 \rangle.
\end{align*}
The concavity property ${a_{ji}(u_i)\geq a_{ji}'(u_i)u_i}$ used before can be written (here for $i=1$, $j=2$)
\begin{align*}
\frac{a_{21}'(u_1)}{u_1} \geq \frac{{a_{21}'(u_1)}^2}{a_{21}(u_1)},
\end{align*}
and together we the assumption \textnormal{\textbf{H3}}, we get
\begin{align*}
\Qq(\uu)(\nabla \ww) &\geq a_{11}'(u_1)\frac{a_{21}'(u_1)^2}{a_{21}(u_1)}|\nabla u_1|^2 + a_{22}'(u_2)\frac{a_{12}'(u_2)^2}{a_{12}(u_2)}|\nabla u_2|^2 \\
&+ a_{12}(u_2)\frac{a_{21}'(u_1)^2}{a_{21}(u_1)}|\nabla u_1|^2 + a_{21}(u_1)\frac{a_{12}'(u_2)^2}{a_{12}(u_2)}|\nabla u_2|^2 + 2 a_{21}'(u_1)a_{12}'(u_2)\langle \nabla u_1 , \nabla u_2 \rangle\\
&=4  |\nabla\sqrt{a_{21}(u_1)}|^2 + 4  |\nabla\sqrt{a_{12}(u_2)}|^2 \\
&+ 4|\sqrt{a_{12}(u_2)}\nabla\sqrt{a_{21}(u_1)}|^2 +  4|\sqrt{a_{21}(u_1)}\nabla\sqrt{a_{12}(u_2)}|^2 \\
&+ 4\times 2 \sqrt{a_{21}(u_1)} \sqrt{a_{12}(u_2)} \langle \nabla \sqrt{a_{21}(u_1)}, \nabla \sqrt{a_{12}(u_2)}\rangle,
\end{align*}
and we thus end up with eq. \eqref{ineq:propertyA2}.
\end{proof}


\subsection{Properties of the functions $\ffi_i$ and $\psi_i$} \label{sub23}

We shall need in the sequel the following elementary result:

\begin{lem}\label{lem:convcav}
 Take $h,\ell\in\mathscr{C}^0(\R_+)\cap\mathscr{C}^1(]0, +\infty[)$ with $h$ concave and $\ell$ nonnegative and convex,
 with $\ell'(x)>0$ for all $x$ large enough.
 \par
 Then there exists a constant $A_{h,\ell}>0$ such that $h(x)\leq A_{h,\ell}\,(1+\ell(x))$ for all $x\in\R_+$.
\end{lem}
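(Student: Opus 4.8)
The plan is to prove the bound $h(x) \le A_{h,\ell}(1+\ell(x))$ by a comparison of asymptotic growth rates, exploiting the fact that a concave function grows at most linearly while the hypotheses force $\ell$ to eventually dominate any linear function. Since both $h$ and $\ell$ are continuous on the compact set $[0,1]$, it suffices to establish the inequality for $x$ large (say $x \ge x_0$ for some $x_0$), and then adjust the constant to absorb the bounded behaviour on $[0,x_0]$; indeed on $[0,x_0]$ we have $h(x) \le \max_{[0,x_0]} h =: M$ while $1+\ell(x) \ge 1$, so the inequality holds there with constant $M$.

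For the large-$x$ regime, first I would record the standard consequence of concavity: for a concave $h \in \mathscr{C}^1(]0,+\infty[)$, the difference quotient $\frac{h(x)-h(1)}{x-1}$ is nonincreasing, hence bounded above by $h'(1)$ for $x > 1$, which gives $h(x) \le h(1) + h'(1)(x-1) \le C_1(1+x)$ for all $x \ge 1$, with $C_1 := |h(1)| + |h'(1)|$. (If $h'(1)$ is not available one can use any point in $]0,+\infty[$ where $h$ is differentiable.) So the whole problem reduces to showing that $x \le C_2(1+\ell(x))$ for $x$ large enough. Here is where the convexity of $\ell$ together with $\ell \ge 0$ and $\ell'(x) > 0$ for $x$ large comes in: pick $x_1$ with $\ell'(x_1) > 0$; then for $x \ge x_1$, convexity gives $\ell(x) \ge \ell(x_1) + \ell'(x_1)(x - x_1) \ge \ell'(x_1)(x-x_1)$, so $x \le x_1 + \ell(x)/\ell'(x_1) \le C_2(1 + \ell(x))$ with $C_2 := \max(x_1, 1/\ell'(x_1))$.

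Combining the two displays, for $x \ge \max(1,x_1)$ we obtain $h(x) \le C_1(1+x) \le C_1(1 + C_2(1+\ell(x))) \le C_1(1+C_2) + C_1 C_2 \ell(x) \le C_1(1+C_2)(1+\ell(x))$, which is the desired estimate on the large-$x$ region. Taking $A_{h,\ell}$ to be the maximum of this constant $C_1(1+C_2)$ and the constant $M$ handling the compact part finishes the proof. The only mild subtlety — hardly an obstacle — is making sure the point at which one linearises $h$ (resp.\ $\ell$) lies in $]0,+\infty[$ where the derivative is defined, and that $\ell'$ is positive there; the hypothesis "$\ell'(x) > 0$ for all $x$ large enough" is exactly what guarantees this for $\ell$, and for $h$ any interior point of differentiability works since $h \in \mathscr{C}^1(]0,+\infty[)$. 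No monotonicity of $h$ or sign condition on $h$ is needed, since concavity alone yields the one-sided linear bound from above.
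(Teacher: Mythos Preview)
Your proof is correct and follows essentially the same approach as the paper's: bound $h$ from above by a tangent line using concavity, bound $\ell$ from below by a tangent line using convexity, and compare. The only cosmetic difference is that the paper first disposes of the case where $h$ is bounded above and then linearises both $h$ and $\ell$ at the same point $M$, whereas you linearise at two different points and handle small $x$ via a compact-set argument; your organisation is arguably cleaner since it avoids the bounded/unbounded case split.
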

\begin{proof}
If $h$ is bounded from above, then $A_{h,l}=\sup h$ works. Otherwise, $h'>0$ on $\R_+$.  
 Say that $0<\ell'(x)$ for $x>M$. Then for all $x\in\R_+$, $x-M\leq \frac{\ell(x)-\ell(M)}{\ell'(M)}$. Then
 since $h'>0$ and $h$ is concave, we can write 
 $$
 h(x)\leq h(M)+h'(M)(x-M)\leq h(M)+\frac{h'(M)}{\ell'(M)}(\ell(x)-\ell(M))\leq h(M)+\frac{h'(M)}{\ell'(M)}\ell(x),
 $$
 so that the constant $A_{h,l}=\max\left(h(M),\frac{h'(M)}{\ell'(M)}\right)$ works.
 That concludes the Proof of Lemma \ref{lem:convcav}.
\end{proof}

Using the previous assumptions on the $a_{ij}$, one can see that the $\ffi_i,\psi_i$ at least belong to$\mathscr{C}^2(]0,+\infty[)$.
The $\psi_i$ are convex functions and the $\ffi_i$ strictly nondecreasing. Moreover:
 
\begin{lem}\label{lem:ineqconv}
We assume \textnormal{\textbf{H2}}  on the coefficients $a_{ij}$ ($i \neq j$). 
Then the $\psi_i$ are convex functions and the $\ffi_i$ are strictly nondecreasing.
 Moreover (for $i\neq j$):
\begin{itemize}
 \item[(i)] For all $x,y\in\R_+$, $\psi'_i(x)(x-y)\geq \psi_i(x)-\psi_i(y)$.
\item[(ii)] $\psi'_i(x)=\operatorname*{o}_{x\rightarrow 0^+}(1/x)$ and hence $x\psi_i'(x)\geq B$, for some constant $B<0$  for all $x\in ]0, +\infty[$.
\item[(iii)] $\psi_i$ has a limit at point $0^+$ ($\psi_1(0)=a_{21}(1)$),
 furthermore $\psi_i$ is strictly positive on $\R_+$.
\item[(iv)] There exists a constant $D>0$ such that, for all $x\in\R_+$
\begin{align*}
  \forall \alpha\in[0,1], \quad x^\alpha + a_{ji}(x) &\leq D\,(1+\psi_i(x)),\\
\quad x\psi_i'(x) &\leq D\,(1+\psi_i(x)). 
\end{align*} 
\end{itemize}
\end{lem}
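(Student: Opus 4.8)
The plan is to establish the four items of Lemma~\ref{lem:ineqconv} in order, relying throughout on assumption \textbf{H2} and the definitions of $\ffi_i$ and $\psi_i$ from Definition~\ref{defips}. For the preliminary remarks: $\ffi_i$ is strictly increasing because $\ffi_i'(x)=a_{ji}'(x)/x>0$ on $]0,+\infty[$ (the numerator is positive by \textbf{H2}), and $\psi_i'=\ffi_i$ so $\psi_i''=\ffi_i'>0$, giving convexity of $\psi_i$. Item~(i) is then immediate: for a convex $\mathscr{C}^1$ function, $\psi_i(y)\geq\psi_i(x)+\psi_i'(x)(y-x)$, which rearranges to the claimed inequality.

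For item~(ii), I would estimate $\psi_i'(x)=\ffi_i(x)=\int_1^x a_{ji}'(t)/t\,\dd t$ as $x\to0^+$. Since $a_{ji}$ is concave and vanishes at $0$, we have $a_{ji}'(t)\le a_{ji}(t)/t$, and more simply $a_{ji}'$ is nonincreasing, so near $0$ the integrand $a_{ji}'(t)/t$ behaves at worst like $a_{ji}'(0^+)/t$; but in fact concavity plus $a_{ji}(0)=0$ forces $a_{ji}(t)\to 0$, and the key point is that $x\ffi_i(x)=x\int_1^x a_{ji}'(t)/t\,\dd t\to 0$. To see this cleanly, write $\abs{x\ffi_i(x)}\le x\int_x^1 a_{ji}'(t)/t\,\dd t$ for $x<1$ and use $a_{ji}'(t)\le a_{ji}(t)/t$ together with concavity (so $a_{ji}(t)/t$ is nonincreasing, hence $a_{ji}(t)/t\le a_{ji}(x)/x$ for $t\ge x$) to bound $x\int_x^1 a_{ji}'(t)/t\,\dd t\le \frac{a_{ji}(x)}{x}\,x\int_x^1 \dd t/t = a_{ji}(x)(-\ln x)\to 0$ since $a_{ji}(x)\to 0$ and $a_{ji}$ is $\alpha$-H\"older-like by the lower bound $x^\alpha a_{ji}'(x)\ge C$ — actually we need $a_{ji}(x)\ln(1/x)\to 0$, which follows from $a_{ji}(x)\le a_{ji}(1)x^{1-\alpha}$ (obtained by integrating $a_{ji}'(t)\le$ the bound from \textbf{H2}... wait, \textbf{H2} bounds $a_{ji}'$ below, not above; instead use $a_{ji}(x)\le a_{ji}'(0^+)\cdot x$ if $a_{ji}'$ is bounded, or simply $a_{ji}(x)\to0$ continuously, and note $a_{ji}(x)\ln(1/x)\to0$ need not hold in general). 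The safe route: since $a_{ji}$ is concave with $a_{ji}(0)=0$, $t\mapsto a_{ji}(t)/t$ is nonincreasing; hence $\ffi_i(x)\ge -\int_x^1 \frac{a_{ji}(t)}{t^2}\dd t\ge -a_{ji}(x)/x\cdot\int_x^1 \dd t = -a_{ji}(x)(1-x)/x$... this still has the $1/x$. The cleanest correct argument is: $x\ffi_i(x)\to 0$ iff $\ffi_i(x)=o(1/x)$; bound $\abs{\ffi_i(x)}\le\int_x^1\frac{a_{ji}'(t)}{t}\dd t$ and since $a_{ji}'$ is nonincreasing, $a_{ji}'(t)\le a_{ji}'(x)$ is wrong direction — instead $\int_x^1 \frac{a_{ji}'(t)}{t}\dd t\le \frac1x\int_x^1 a_{ji}'(t)\dd t=\frac{a_{ji}(1)-a_{ji}(x)}{x}$, still $O(1/x)$. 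So I expect the actual argument in the paper integrates by parts: $\int_x^1\frac{a_{ji}'(t)}{t}\dd t=[a_{ji}(t)/t]_x^1+\int_x^1\frac{a_{ji}(t)}{t^2}\dd t=a_{ji}(1)-\frac{a_{ji}(x)}{x}+\int_x^1\frac{a_{ji}(t)}{t^2}\dd t$, and then $\frac{a_{ji}(x)}{x}-$ (telescoping with the integral) gives cancellation of the singular parts since $\int_x^1\frac{a_{ji}(t)}{t^2}\dd t\le\frac{a_{ji}(x)}{x}\int_x^1\frac{\dd t}{t}\cdot\frac{t}{t}$... The telescoping $\frac{a_{ji}(x)}{x}-\int_x^1 \frac{a_{ji}(t)}{t^2}\,\dd t$ is exactly $-\int_x^1 \frac{d}{dt}\!\left(\frac{a_{ji}(t)}{t}\right)\!\dd t\ge 0$ and is bounded by $\frac{a_{ji}(x)}{x}-\frac{a_{ji}(1)}{1}\le \frac{a_{ji}(x)}{x}$; multiplying by $x$ gives $a_{ji}(x)\to0$. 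This is the point I expect to be the main technical obstacle and where one must be careful.

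For item~(iii), once (ii) gives $\ffi_i=\psi_i'$ integrable near $0$ (since $\psi_i'(x)=o(1/x)$ is not itself integrability, but combined with monotonicity of $\ffi_i$ and the explicit value $\ffi_1(x)=\int_1^x a_{21}'(t)/t\,\dd t$ one checks $\psi_1(x)=\int_1^x\ffi_1$ has a finite limit as $x\to0^+$ equal to $-\int_0^1\ffi_1$, and a direct computation via Fubini gives $\psi_1(0^+)=\int_0^1\int_t^1 \frac{a_{21}'(s)}{s}\,\dd s\,\dd t=\int_0^1 a_{21}'(s)\,\dd s=a_{21}(1)$). Strict positivity of $\psi_i$ on $\R_+$ follows since $\psi_i$ is convex, $\psi_i(1)=0$, $\psi_i'(1)=\ffi_i(1)=0$, so $1$ is the global minimum — but the minimum value is $0$, not positive! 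So strict positivity must instead use that $\psi_i$ is \emph{strictly} convex (as $\psi_i''=\ffi_i'>0$), hence $1$ is the unique minimizer and $\psi_i(x)>\psi_i(1)=0$ for $x\ne 1$, while at $x=1$ we... get $0$. I suspect the paper's $\psi_i$ or the normalization is such that $\psi_i(1)>0$, or strict positivity is meant on $\R_+\setminus\{1\}$ with the limit statement covering the rest; in the write-up I will state it as: $\psi_i$ is strictly convex with minimum at $1$, $\psi_i(0^+)=a_{ji}(1)>0$ and $\psi_i\to+\infty$ at infinity, so $\psi_i>0$ except it attains a nonpositive... I will follow the definitions literally and note $\psi_i\geq -\,(\text{const})$ suffices downstream. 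Finally, item~(iv): both inequalities follow from Lemma~\ref{lem:convcav}. For the first, apply it with $h(x)=x^\alpha+a_{ji}(x)$ (concave: $x^\alpha$ is concave for $\alpha\in[0,1]$ and $a_{ji}$ concave by \textbf{H2}) and $\ell=\psi_i$ (nonnegative by (iii) up to a constant shift, convex, with $\ell'(x)=\ffi_i(x)\to+\infty$ since $\ffi_i$ is increasing and unbounded — because $\ffi_i'(x)=a_{ji}'(x)/x\ge C/x^{1+\alpha}$ is not integrable... wait that gives convergence; rather $a_{ji}'(x)\ge C x^{-\alpha}$ so $\ffi_i(x)=\int_1^x a_{ji}'(t)/t\,\dd t\ge C\int_1^x t^{-1-\alpha}\dd t$ which is bounded — so unboundedness of $\ffi_i$ needs another source; actually $a_{ji}$ nondecreasing and not identically constant gives $a_{ji}'>0$ somewhere, and $\ffi_i$ increasing; if $\ffi_i$ were bounded then $\psi_i$ would be asymptotically linear, and $h(x)=x^\alpha+a_{ji}(x)$ with $a_{ji}$ concave is sublinear, so $h\le A(1+\psi_i)$ still holds by Lemma~\ref{lem:convcav} provided $\psi_i'>0$ eventually, which holds since $\ffi_i>0$ on $]1,\infty[$). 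So Lemma~\ref{lem:convcav} applies directly and gives the first bound; uniformity of the constant $D$ over $\alpha\in[0,1]$ comes from monotonicity of $x\mapsto x^\alpha$ in $\alpha$ for $x\ge1$ and boundedness for $x\le1$. For the second bound, $x\psi_i'(x)=x\ffi_i(x)$, and I will show $x\ffi_i(x)\le D(1+\psi_i(x))$ by writing $x\ffi_i(x)=x\int_1^x a_{ji}'(t)/t\,\dd t$ and comparing with $\psi_i(x)=\int_1^x\ffi_i(t)\,\dd t$: integrating by parts $\psi_i(x)=[t\ffi_i(t)]_1^x-\int_1^x t\ffi_i'(t)\,\dd t=x\ffi_i(x)-\ffi_i(1)-\int_1^x a_{ji}'(t)\,\dd t=x\ffi_i(x)-(a_{ji}(x)-a_{ji}(1))$, so $x\ffi_i(x)=\psi_i(x)+a_{ji}(x)-a_{ji}(1)\le \psi_i(x)+a_{ji}(x)$, and then $a_{ji}(x)\le D'(1+\psi_i(x))$ by the first part, yielding the result. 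I expect item~(ii)'s boundary analysis to be the one genuine subtlety; the rest is bookkeeping with convexity and Lemma~\ref{lem:convcav}.
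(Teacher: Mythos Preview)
Your overall approach is sound and in places cleaner than the paper's, but item~(ii) is where your write-up actually fails to close. After the integration by parts you correctly reach
\[
\ffi_i(x)=\frac{a_{ji}(x)}{x}-a_{ji}(1)+\int_1^x\frac{a_{ji}(t)}{t^2}\,\dd t,
\]
but your subsequent ``telescoping'' manipulation is circular (the quantity you identify as $-\int_x^1\frac{d}{dt}(a_{ji}(t)/t)\,\dd t$ is not $\frac{a_{ji}(x)}{x}-\int_x^1\frac{a_{ji}(t)}{t^2}\,\dd t$; the two differ by $-\ffi_i(x)$, so you are assuming what you want to prove). The clean finish---and this is exactly what the paper does, in one line---is to observe that all three terms on the right, multiplied by $x$, tend to $0$: the first gives $a_{ji}(x)\to 0$, the second $-x\,a_{ji}(1)\to 0$, and for the third an $\varepsilon$-splitting using only continuity and $a_{ji}(0)=0$ gives $x\int_x^1 a_{ji}(t)/t^2\,\dd t\le \varepsilon + x\,C_\delta$ for $x<\delta$. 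No concavity or the lower bound from \textbf{H2} is needed here, which is why your attempts invoking those stalled.

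On (iii): your Fubini computation of $\psi_1(0^+)=a_{21}(1)$ is correct and different from the paper, which instead derives the identity $a_{21}(1)-a_{21}(x)=\psi_1(x)-x\ffi_1(x)$ and lets $x\to 0^+$ via (ii). Your catch that $\psi_i(1)=0$ is correct; the paper's ``strictly positive on $\R_+$'' is imprecise at the single point $x=1$, but only $\psi_i\ge 0$ (so $1+\psi_i\ge 1$) is ever used downstream.

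On (iv), your route to the second inequality is genuinely nicer than the paper's. The paper applies Lemma~\ref{lem:convcav} to the concave function $h(x)=x\psi_i'(x)-2\psi_i(x)$, which requires checking $h''<0$. Your integration by parts gives the exact identity $x\ffi_i(x)=\psi_i(x)+a_{ji}(x)-a_{ji}(1)$, from which $x\psi_i'(x)\le \psi_i(x)+a_{ji}(x)$ and the first part of (iv) finish the job immediately. For the first inequality, the paper sidesteps your uniformity-in-$\alpha$ worry by applying Lemma~\ref{lem:convcav} once with $h(x)=x+a_{ji}(x)$ and then using $x^\alpha\le 1+x$.
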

\begin{proof}
Let us treat only the case $i=1$, the other one being similar.
\begin{itemize}
 \item[(i)] $\psi_1$ is convex.
\item[(ii)]  $\psi_1'(x)=\ffi_1(x)$ and
\begin{align*}
 \ffi_1(x)=\int_1^x \frac{a_{21}'(t)}{t}\dd t = \Big[\frac{a_{21}(t)}{t}\Big]_1^x + \int_{1}^x \frac{a_{21}(t)}{t^2}\dd t = \operatorname*{o}_{x\rightarrow 0^+}(1/x),
\end{align*}
since $a_{21}\in\mathscr{C}^0(\R_+)$ and $a_{21}(0)=0$. 
The function  $x \mapsto x\ffi_1(x)$ is strictly nondecreasing after $x=1$ and bounded near $0$, hence lower bounded.
\item[(iii)] Note that $\psi_1''(t)t=a_{21}'(t)$. We have
\begin{align*}
          a_{21}(1) - a_{21}(x) = \int_x^1 t\psi_1''(t)\dd t = \Big[t\psi_1'(t)\Big]_x^1 - \int_x^1 \psi_1'(t)\dd t,\\
             \end{align*}
             then noticing that $\psi_i(1)=\varphi_i(1)=0$ by construction, we have
             $$
             a_{21}(1)-a_{21}(x)=\psi_1(x)-x\varphi_1(x).
             $$
hence the previous point $(ii)$ gives the limit near $0$. For the positiveness, just notice that $\ffi_i = \psi_i'$ is negative on $[0,1]$.
\item[(iv)] We use Lemma \ref{lem:convcav} with $h(x)=x + a_{ji}(x)$, $\ell(x)=\psi_1(x)$ and $x^\alpha \leq 1+x$ 
in order to obtain the first inequality. For the second inequality, we use the same Lemma with $h(x)=x\psi_1'(x)-2\psi_1(x)$, and $\ell(x)=\psi_1(x)$.
\end{itemize}

\end{proof}

\subsection{A small perturbation} \label{sub24}

Since $\ffi_i$ may not be one to one, we use a small perturbation of this function and
consider the following definition:

\begin{definition}\label{defipseps}
Let us assume \textnormal{\textbf{H2}} (and recall Definition \ref{defips}) on the coefficients $a_{ij}$ ($i \neq j$), and
\textnormal{\textbf{H3}} on the coefficients $a_{ii}$.
\par
For all $\var>0$ (small enough), we introduce
\begin{align*}
 \ffi_i^\ep(x):=\ffi_i(x)+\ep\ln(x),
\end{align*}
and, equivalently, (for $i\neq j$) $a_{ij}^\ep:=a_{ij}+\ep x$, and $\psi_i^\ep(x):=\psi_i(x)+\ep x\ln(x)-\ep x$.
\par
We also introduce $a_{ii}^\ep(x) = x \,d_{ii}^\ep(x)$, with $d_{ii}^\ep=\gamma_\ep(d_{ii})$,
 where $(\gamma_\ep)_{\ep>0}$ is an increasing family of smooth nonnegative and nondecreasing functions,
 such that $\gamma_\ep \leq \ep^{-1}$ on $\R_+$, $\gamma_\ep(1)=1$ and  $(\gamma_\ep)_{\ep>0}$
is uniformly converging to the identity on compact sets.
\par
Finally, we denote by $A^\ep$ the matrix $A$ (defined by (\ref{mat})), where the coefficients $a_{ij}$ are replaced
by $a_{ij}^\ep$ (for all $i,j \in \{1,2\}$).
\end{definition}

\bigskip

The crucial point is the following: Proposition \ref{prop:propertyA} is still true when one replaces the functions
 $a_{ij}$ and $A$ by their $\ep$-approximations,  $a_{ij}^\ep$ and $A^\ep$, (assumption \textnormal{\textbf{H2}} and \textnormal{\textbf{H3}} hold for the $a_{ij}^\ep$), and if one tries to reproduce Lemma \ref{lem:ineqconv} with these new functions, all the inequalities remain the same, the constants being a little bit changed but \sl{ not depending on $\ep$}.
We write the following lemma, which summarizes the situation:

\begin{lem}\label{lem:ineqconv:approx} We assume that \textnormal{\textbf{H2}} and 
\textnormal{\textbf{H3}} holds on the coefficients $a_{ij}$ (for all $i,j \in \{1,2\}$), and use the notations of 
Definition \ref{defipseps}. 
\par
Then, the coefficients $a_{ij}^\ep$ also satisfy \textnormal{\textbf{H2}} and 
\textnormal{\textbf{H3}} (with constants independent of $\var$ for $\var>0$ small enough).
Also 
$B$ and $D$ (the constants of Lemma \ref{lem:ineqconv}) may be changed in order to have, for $i=1,2$ and $0<\ep<1$:
\begin{itemize}
 \item[(i)] For all $x,y\in\R_+$, ${\psi_i^{\ep}}'(x)(x-y)\geq \psi_i^\ep(x)-\psi_i^\ep(y)$.
\item[(ii)] ${\psi^\ep_i}'(x)=\operatorname*{o}_{x\rightarrow 0^+}(1/x)$ and furthermore,
 for all $x\in ]0, +\infty[$, $x{\psi_i^{\ep}}'(x)\geq B-\ep e^{-1}$.
\item[(iii)] $\psi_i^\ep$ has a limit at point $0^+$. For $\ep$ small enough, $\psi_i^\ep$ is strictly
positive on $\R_+$.
\item[(iv)] For all $x\in\R_+$
\begin{align*}
  \forall \alpha\in[0,1], \quad x^\alpha + a_{ji}^\ep(x) &\leq D(1+\epsilon)\,(1+\psi_i^\var(x)), \\
 \quad x{\psi_i^\ep}'(x) &\leq D(1+\epsilon)(1+\psi_i^\ep(x)), 
\end{align*} 
where $D$ is the constant defined above in lemma~\ref{lem:ineqconv}.
\end{itemize}
\end{lem}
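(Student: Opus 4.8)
The plan is to show that the $\ep$-perturbed coefficients satisfy the same structural assumptions as the original ones, with constants that are uniform in $\ep$ for $\ep$ small, and then to re-run the proof of Lemma \ref{lem:ineqconv} tracking the (harmless) changes. First I would verify \textbf{H2} for $a_{ij}^\ep = a_{ij} + \ep x$ ($i \neq j$): continuity, $\mathscr{C}^2$ regularity away from $0$, nonnegativity, monotonicity, concavity and vanishing at $0$ are all immediate since adding the linear map $\ep x$ preserves each of these properties; for the lower bound $x^\alpha (a_{ij}^\ep)'(x) = x^\alpha a_{ij}'(x) + \ep x^\alpha \ge C$, which actually improves (the extra term is nonnegative since $\alpha \in ]0,1[$ and $x \ge 0$). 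Then I would check \textbf{H3} for $a_{ii}^\ep(x) = x\, d_{ii}^\ep(x)$ with $d_{ii}^\ep = \gamma_\ep(d_{ii})$: since $\gamma_\ep$ is smooth, nonnegative, nondecreasing and $d_{ii}$ is continuous, nonnegative, nondecreasing and $\mathscr{C}^1$ away from $0$, the composition $d_{ii}^\ep$ inherits continuity, nonnegativity, monotonicity and $\mathscr{C}^1$ regularity; moreover $d_{ii}^\ep(0) = \gamma_\ep(d_{ii}(0)) > 0$ because $d_{ii}(0) > 0$ and $\gamma_\ep$ is nondecreasing with $\gamma_\ep(1) = 1$ forcing positivity on a neighborhood — more carefully, one uses $\gamma_\ep \le \ep^{-1}$ and the uniform convergence to the identity on compacts to get, for $\ep$ small, $d_{ii}^\ep(0) \ge \tfrac12 d_{ii}(0) \ge \tfrac12 > 0$, so the lower bound on $(a_{ii}^\ep)'$ holds uniformly. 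This establishes the first assertion.

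Next, for the four enumerated estimates I would simply repeat the proof of Lemma \ref{lem:ineqconv} with $\psi_i^\ep$, $\ffi_i^\ep$ in place of $\psi_i$, $\ffi_i$, using $\psi_i^{\ep\prime} = \ffi_i^\ep = \ffi_i + \ep \ln$ and $\psi_i^{\ep\prime\prime}(x) = \ffi_i'(x) + \ep/x = (a_{ji}^\ep)'(x)/x$. Point (i) is just convexity of $\psi_i^\ep$, which holds because $\psi_i$ is convex and $x \mapsto \ep(x\ln x - x)$ is convex on $\R_+$ (second derivative $\ep/x > 0$). For (ii), the computation $\ffi_i^\ep(x) = \ffi_i(x) + \ep \ln x$; since $\ffi_i(x) = \operatorname*{o}_{x\to 0^+}(1/x)$ from Lemma \ref{lem:ineqconv}(ii) and $\ep \ln x = \operatorname*{o}_{x\to 0^+}(1/x)$, the sum is again $\operatorname*{o}(1/x)$; and $x\,\psi_i^{\ep\prime}(x) = x\ffi_i(x) + \ep x \ln x \ge B + \inf_{x > 0}(\ep x \ln x) = B - \ep e^{-1}$, using that $x \ln x$ attains its minimum $-e^{-1}$ at $x = e^{-1}$. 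For (iii), one redoes the integration-by-parts identity with $\psi_i^{\ep\prime\prime}(t) t = (a_{ji}^\ep)'(t)$ to get $a_{ji}^\ep(1) - a_{ji}^\ep(x) = \psi_i^\ep(x) - x\,\ffi_i^\ep(x)$ (noting $\psi_i^\ep(1) = \psi_i(1) - \ep = -\ep$ and $\ffi_i^\ep(1) = 0$, so a constant adjustment appears but the limit as $x \to 0^+$ exists by (ii)), and strict positivity for small $\ep$ follows from $\psi_i > 0$ on $\R_+$ (Lemma \ref{lem:ineqconv}(iii)) together with the uniform smallness of the perturbation $\ep(x\ln x - x)$, which is bounded on $[0,1]$ and controlled by Lemma~\ref{lem:convcav}-type arguments for $x \ge 1$; actually it is cleaner to note $\psi_i^\ep(x) \ge \psi_i(x) - \ep$ near $0$ (where $x\ln x - x \ge -\ep$ trivially up to constants) and $\psi_i^\ep \ge \psi_i \ge$ const $> 0$ wherever $x \ln x \ge x$, i.e. $x \ge e$, handling the compact middle range by continuity and the uniform bound $0 < \inf_{[0,e]} \psi_i$.

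For (iv), I would apply Lemma \ref{lem:convcav} to the pairs $h(x) = x + a_{ji}^\ep(x)$, $\ell(x) = \psi_i^\ep(x)$ and $h(x) = x\,\psi_i^{\ep\prime}(x) - 2\psi_i^\ep(x)$, $\ell(x) = \psi_i^\ep(x)$ exactly as in the original proof, but I would instead give an explicit constant tracking: since $a_{ji}^\ep = a_{ji} + \ep x \le a_{ji} + x$ and $\psi_i^\ep = \psi_i + \ep(x\ln x - x) \ge \psi_i - \ep \ge \psi_i - 1$, while on the region where $x\ln x \ge x$ one has $\psi_i^\ep \ge \psi_i$, one can deduce $x^\alpha + a_{ji}^\ep(x) \le x^\alpha + a_{ji}(x) + x \le D(1 + \psi_i(x)) + x \le D(1+\ep)(1 + \psi_i^\ep(x))$ after absorbing the extra $x$ using $x \le 1 + x\ln x + \ldots \le C(1 + \psi_i^\ep(x))$ (valid because $\psi_i^{\ep\prime} = \ffi_i^\ep \to +\infty$, so $x = \operatorname*{o}(\psi_i^\ep(x))$); and similarly for the second estimate using $x\,\psi_i^{\ep\prime}(x) = x\ffi_i(x) + \ep x\ln x \le D(1+\psi_i(x)) + \ep x\ln x$ and bounding $\ep x \ln x \le \ep \psi_i^\ep(x) + \ep\,x + \ldots$ The main obstacle — or rather the only point requiring genuine care rather than bookkeeping — is making the constants in (iii) and (iv) \emph{uniform} in $\ep$: one must be sure that the perturbation $\ep(x\ln x - x)$, which is unbounded, does not spoil the inequalities; the resolution is that this perturbation is itself $\operatorname*{o}(\psi_i(x))$ as $x \to \infty$ (since $\psi_i' = \ffi_i \to +\infty$ while $(x\ln x - x)' = \ln x$), so it is absorbed into the $1 + \psi_i^\ep$ terms at the cost of the harmless multiplicative factor $1 + \ep$, and on compacts everything is controlled by continuity and the uniform convergence of $\gamma_\ep$ to the identity. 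Once this is observed the whole lemma follows by straightforward repetition of the earlier arguments.
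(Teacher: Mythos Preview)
Your proof is correct and follows essentially the same approach as the paper: convexity for (i), the minimum of $x\ln x$ for (ii), smallness of the perturbation $\ep(x\ln x - x)$ for (iii), and controlling the $\ep$-corrections via the already-proven inequalities of Lemma~\ref{lem:ineqconv} for (iv). The paper is terser---for (iv) it works directly from $\ep x \le \ep D(1+\psi_i)$ and $\ep x\ln x \le D\ep(x\ln x - x + 1) + D(1+\psi_i)$ rather than re-invoking Lemma~\ref{lem:convcav}---but your explicit constant-tracking amounts to the same computation, and your added verification of \textbf{H2}--\textbf{H3} for the perturbed coefficients is a useful detail the paper leaves implicit.
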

\begin{proof}
As before, we treat only the case $i=1$.
 \begin{itemize}
 \item[(i)] $\psi_1^\ep$ is still convex.
\item[(ii)]  $\ln(x)=\operatorname*{o}_{x\rightarrow 0^+}(1/x)$ and $\ep x\ln(x)\geq -\ep e^{-1} \geq - e^{-1}$.
\item[(iii)] $x\ln(x)-x$ goes to $0$ with $x$ and $\ep(x\ln(x)-x)\geq -\ep$.
\item[(iv)] Firstly, we notice that $D\geq 1+a_{ji}(1)>1$. For the first inequality,  is sufficient to notice that since $\psi_i^\varepsilon\leq \psi_i$, we have
\begin{align*}
\ep x &\leq \ep (x+a_{ij}(x))\leq \ep D(1+\psi_i),
\end{align*}
and we conclude with 
$$
x^\alpha+a_{ij}^\ep(x)\leq D(1+\ep)(1+\psi_i(x))\leq D(1+\ep)(1+\psi_i^\ep(x).
$$
For the second inequality, we use
\begin{align*}
\ep x \ln(x) &\leq \ep (x\ln(x)-x+1)+\ep x\leq D\ep (x\ln(x)-x+1)+D(1+\psi_i(x)),
\end{align*} 
so that 
$$ x{\psi_i^\ep}'(x)\leq x\psi_i'(x)+\ep x \ln(x) \leq D(1+\psi_i(x))+ D\ep (x\ln(x)-x+1)+D(1+\psi_i(x))\leq D(1+\ep)(1+\psi_i^\ep(x))
$$
for some constant $D$ (obviously not depending on $\ep$). Then 
we add these inequalities to those of point (iv) of Lemma \ref{lem:ineqconv}, that we already proved.
\end{itemize}
\end{proof}

\subsection{Two standard results} \label{sub25}

 The preliminaries are concluded by the statement of two results taken from the existing literature, and that will be used in the sequel.

\begin{lem}[Discrete Gr\"onwall]\label{lem:grondis}
Consider two nonnegative sequences $(v_n,w_n)_{n\in\N}$, satisfying for some positive constants $C>0,\theta \in ]0,1[$, 
\begin{align*}
 \forall n\in\N^*,\quad v_{n}\leq v_{n-1} + \theta v_n + w_{n}.
\end{align*}
Then, for all $n\in\N^*$:
\begin{align*}
 v_n \leq e^{n\lambda_\theta}v_0 + \sum_{k=0}^{n-1} e^{k\lambda_\theta}w_{n-k}\leq e^{\lambda_\theta}\Big[v_0+\sum_{k=1}^{n}e^{(-k+1)\lambda_\theta }w_k\Big],
\end{align*}
with $\lambda_\theta:=\theta/(1-\theta)$. If $w_n=C$ is constant, we have
$$
v_n\leq e^{n\lambda_\theta}\left[v_0 + \frac{C}{\theta}\right].
$$
\end{lem}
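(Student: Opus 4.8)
The plan is a completely elementary one‑step iteration: none of the preliminary results is needed, and the only analytic input is the scalar convexity inequality $1+x\le e^{x}$. First I would isolate $v_n$ in the hypothesis. Since $\theta\in\,]0,1[$, the inequality $v_n\le v_{n-1}+\theta v_n+w_n$ is equivalent to $(1-\theta)\,v_n\le v_{n-1}+w_n$, that is
\begin{equation*}
v_n\le \mu\,(v_{n-1}+w_n),\qquad \mu:=\frac{1}{1-\theta}=1+\lambda_\theta\le e^{\lambda_\theta},
\end{equation*}
where the last bound is $1+x\le e^{x}$ at $x=\lambda_\theta$ (equivalently $-\ln(1-\theta)\le\lambda_\theta$).

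Next I would iterate this one‑step estimate by induction on $n$: the case $n=1$ is the estimate itself, and for $n>1$ one substitutes the bound already obtained for $v_{n-1}$ into $v_n\le \mu v_{n-1}+\mu w_n$. This yields, for every $n\in\N^{*}$,
\begin{equation*}
v_n\le \mu^{n}\,v_0+\sum_{k=1}^{n}\mu^{\,n-k+1}\,w_k .
\end{equation*}
Because $v_0\ge 0$ and $w_k\ge 0$, each power can be compared termwise with $\mu^{m}\le e^{m\lambda_\theta}$, and a reindexing of the sum ($k\mapsto n-k$) then puts the right‑hand side into the announced form, the two displayed versions of the statement differing only by the factorization of a power of $e^{\lambda_\theta}$ (together with the crude bound $e^{\lambda_\theta}\ge 1$).

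For the constant case $w_n\equiv C$ I would not sum the resulting geometric series directly but instead absorb the constant: the number $a:=C/\theta$ is exactly the one for which $\mu C+a=\mu a$ (indeed $\mu-1=\lambda_\theta$ and $\mu/\lambda_\theta=1/\theta$), so the one‑step estimate becomes $v_n+a\le \mu(v_{n-1}+C)+a=\mu(v_{n-1}+a)$. Iterating this purely multiplicative recursion and using $\mu\le e^{\lambda_\theta}$ gives $v_n+a\le \mu^{n}(v_0+a)\le e^{n\lambda_\theta}(v_0+a)$, that is $v_n\le e^{n\lambda_\theta}(v_0+C/\theta)$.

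I do not expect any genuine obstacle: the statement is elementary. The only points deserving a line of justification are the convexity inequality used to replace $\mu$ by $e^{\lambda_\theta}$, the (tacit) nonnegativity of $v_0$ and of the $w_k$, which is precisely what makes the termwise comparison of powers legitimate, and keeping the index bookkeeping straight throughout the iteration.
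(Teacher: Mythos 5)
Your proof is correct and follows essentially the same route as the paper: isolate $v_n$ to get $v_n\leq \frac{1}{1-\theta}\,(v_{n-1}+w_n)$, iterate by induction, and replace $\frac{1}{1-\theta}=1+\lambda_\theta\leq e^{\lambda_\theta}$ via the convexity bound; your absorption trick $a=C/\theta$ in the constant case is only a cosmetic variant of the paper's geometric-series summation and yields the same estimate $v_n\leq e^{n\lambda_\theta}\left[v_0+C/\theta\right]$. One caveat: what both you and the paper actually prove is $v_n\leq e^{n\lambda_\theta}\left[v_0+\sum_{k=1}^{n}e^{(1-k)\lambda_\theta}w_k\right]$, i.e.\ coefficient $e^{(k+1)\lambda_\theta}$ on $w_{n-k}$, whereas the lemma's displayed middle term $\sum_{k=0}^{n-1}e^{k\lambda_\theta}w_{n-k}$ and the prefactor $e^{\lambda_\theta}$ on the right are typos (the middle expression already fails for $n=1$, $v_0=0$, $v_1=w_1/(1-\theta)$), so your remark that a reindexing ``together with $e^{\lambda_\theta}\geq 1$'' recovers the announced form cannot bridge that gap --- and need not, since only the factored form (and the constant case) is used later in the paper.
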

\begin{proof}
Notice first that $\displaystyle \frac{1}{1-\theta} \leq e^{\lambda_\theta}$ by the convexity inequality of the exponential.  We hence have 
\begin{align*}
v_n \leq e^{\lambda_\theta} v_{n-1} + \frac{w_n}{1-\theta}.
\end{align*}
By straightforward induction, we get
\begin{align*}
v_n \leq \frac{1}{(1-\theta)^n} v_0 +  \sum_{k=1}^{n} \frac{w_{k}}{(1-\theta)^{n-k+1}}  \leq  \frac{1}{(1-\theta)^n} \left[v_0 +\sum_{k=1}^n\frac{1}{(1-\theta)^{1-k}}w_k\right]\leq e^{n\lambda_\theta} \left[v_0 +\sum_{k=1}^n w_{k}e^{(1-k)\lambda_\theta}\right].
\end{align*}
In case of constant $w_n$, we use the following fact
$$
\frac{1}{(1-\theta)^n} \left[v_0 +\sum_{k=1}^n\frac{1}{(1-\theta)^{1-k}}C\right]\leq e^{n\lambda_\theta}\leq \frac{1}{(1-\theta)^n} \left[v_0 +\frac{C}{\theta}\right]\leq e^{n\lambda_\theta}\left[v_0+\frac{C}{\theta}\right] .
$$
\end{proof}
The following Theorem can be found in \cite{gilbarg} (in the more general case of an infinite dimensional Banach space) where it is presented as the ``Leray-Schauder Theorem'' (p.286, Theorem 11.6):

\begin{Theo}
\label{theo:brou}
 Let $(\E,\|\cdot\|)$ be a Euclidian vector space and \\
$T:[0,1]\times \E\rightarrow\E$ a continuous function satisfying $T(0,\cdot)\equiv 0$. Suppose furthermore the existence of $R>0$ such that for any $s\in[0,1]$, the following \emph{a priori} estimate holds
 for the fixed points of $T(s,\cdot)$:
\begin{align*}
 T(s,x)=x \Longrightarrow \|x\| < R.
\end{align*}
Then $T(1,\cdot):\E\rightarrow\E$ has at least one fixed point in $B(0,R)$.
\end{Theo}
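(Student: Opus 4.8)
The plan is to deduce Theorem~\ref{theo:brou} from Brouwer's fixed-point theorem by means of the topological (Brouwer) degree. Write $\Omega := B(0,R)\subset\E$, an open bounded set containing the origin, and introduce the continuous map $H:[0,1]\times\overline{\Omega}\to\E$, $H(s,x):=x-T(s,x)$. A fixed point of $T(s,\cdot)$ is exactly a zero of $H(s,\cdot)$, so the \emph{a priori} estimate of the statement translates into: for every $s\in[0,1]$, the map $H(s,\cdot)$ has no zero on $\partial\Omega=\{x\in\E:\|x\|=R\}$. Consequently the Brouwer degree $\deg(H(s,\cdot),\Omega,0)$ is well defined for each $s\in[0,1]$.

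Next I would invoke the two structural properties of the degree. By homotopy invariance --- legitimate since $H$ is continuous on $[0,1]\times\overline{\Omega}$ and $0\notin H(s,\partial\Omega)$ for all $s$ --- the integer $\deg(H(s,\cdot),\Omega,0)$ is independent of $s$, hence equals its value at $s=0$. By the normalization property, since $T(0,\cdot)\equiv 0$ we have $H(0,\cdot)=\Id$, so $\deg(H(0,\cdot),\Omega,0)=\deg(\Id,\Omega,0)=1$ because $0\in\Omega$. Therefore $\deg(H(1,\cdot),\Omega,0)=1\neq 0$, and the solvability property of the degree produces a point $x\in\Omega=B(0,R)$ with $H(1,x)=0$, i.e. $T(1,x)=x$; this $x$ automatically satisfies $\|x\|<R$ by the \emph{a priori} estimate.

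I expect the main obstacle to be not this three-line argument but the underlying construction of the Brouwer degree together with the verification of its normalization, its dependence on boundary values only, its homotopy invariance and its solvability property; this is precisely the content that ``reduces to Brouwer'', and in a self-contained write-up it is where one cites \cite{gilbarg}. As an alternative that bypasses degree theory, one may apply Brouwer directly to the continuous self-map $x\mapsto r(T(1,x))$ of $\overline{B(0,R)}$, where $r$ denotes the radial retraction onto that ball: a resulting fixed point $x_0$ either satisfies $\|T(1,x_0)\|\le R$, whence $T(1,x_0)=x_0$ and we are done, or satisfies $x_0=\lambda\,T(1,x_0)$ with $\lambda\in(0,1)$ and $\|x_0\|=R$. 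Excluding the second alternative, however, requires control on $T(s,\cdot)$ for intermediate values of $s$ (it is immediate when the homotopy is the linear one $T(s,\cdot)=s\,T(1,\cdot)$, but not for a general homotopy), so for the theorem exactly as stated the homotopy invariance of the degree appears unavoidable.
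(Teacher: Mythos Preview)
Your degree-theoretic argument is correct and is the standard route to this result. Note, however, that the paper does not actually supply a proof of Theorem~\ref{theo:brou}: it is stated as a quotation from the literature, with a reference to \cite{gilbarg} (Theorem~11.6, p.~286), where the infinite-dimensional version is proved via the Leray--Schauder degree along exactly the lines you sketch. So there is nothing to compare against beyond confirming that your proposal matches the cited source's method; your remark that the radial-retraction shortcut only handles the linear homotopy $T(s,\cdot)=s\,T(1,\cdot)$ is also accurate.
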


\section{Approximate system of finite dimension}\label{section3}

\subsection{Notations}\label{sub31}

We start with a definition related to the discretization w.r.t. time, as in \cite{Chen2006}:

\begin{definition} \label{timestep}
We decompose the time interval, $\displaystyle{]0,T]=\bigcup_{k=1}^N](k-1)\tau,k\tau]}$, where $N\in\N^*$ and $\tau:=T/N$,
and introduce the finite difference operator : $\displaystyle{{\partial_\tau u^{k}:=\frac{u^{k}-u^{k-1}}{\tau}} }$.
 \end{definition}

We also introduce new definitions related to the reaction terms:

\begin{definition} \label{reactionpm}
 Let us denote by $R_{12}^\pm$ and $R_{21}^\pm$ the positive/negative parts of the source terms, and use the same notation for $\Rr^\pm$. To write things more precisely, we have
\begin{align*}
\Rr^+(\uu):=\begin{pmatrix}
r_1 & 0 \\
0 & r_2
 \end{pmatrix}\begin{pmatrix}
        u_1\\
u_2
 \end{pmatrix},\quad \Rr^-(\uu):=
\begin{pmatrix}
u_1 & 0 \\
0 & u_2
 \end{pmatrix}
\begin{pmatrix}
        s_{11}(u_1)+s_{12}(u_2)\\
        s_{22}(u_2)+s_{21}(u_1)
 \end{pmatrix},
\end{align*}
and obviously
\begin{align*}
\Rr(\uu) = \Rr^+(\uu)-\Rr^-(\uu).
\end{align*}
We introduce the following approximation for this reaction term
\begin{align*}
\Rr^\ep(\uu) = \Rr^+(\uu)-\Rr^{-,\ep}(\uu),
\end{align*}
with
\begin{align*}
\Rr^{-,\ep}(\uu):=
\begin{pmatrix}
u_1 & 0 \\
0 & u_2
 \end{pmatrix}
\begin{pmatrix}
        \gamma_\ep\big(s_{11}(u_1)+s_{12}(u_2)\big)\\
\gamma_\ep\big(s_{22}(u_2)+s_{21}(u_1) \big)
 \end{pmatrix},
\end{align*}
where $\gamma_\ep$ is the truncation function that we used in Definition \ref{defipseps}.
\end{definition}
Let us consider a sequence $(\V_n)_{n\in\N}$ of subspaces of $\mathscr{C}^\infty_{\nu}(\overline{\Omega})$, such that for all $n$, $\V_n$ is $n$-dimensional, $\V_n\subset\V_{n+1}$, and $\bigcup_{n\in\N} \V_n$ is  dense in $\L^2(\Omega)$. We 
assume that $\V_1 = \langle \mathbf{1}_\Omega\rangle_\R$, so that the constant function $\mathbf{1}_{\Omega}$ lies in all subspaces $\V_n$. 
\vspace{2mm}\\
For the sake of clarity, let us denote in boldface the two component vectors:
\begin{align*}
\cchi:=\begin{pmatrix}
        \chi_1 \\ \chi_2
       \end{pmatrix},
\uu:=\begin{pmatrix}
        u_1 \\ u_2
       \end{pmatrix}
,
\ww:=\begin{pmatrix}
        w_1 \\ w_2
       \end{pmatrix}\quad\text{and}\quad \Rr(\uu):=\begin{pmatrix}
        R_{12}(u_1,u_2) \\
R_{21}(u_2,u_1)
 \end{pmatrix} .
\end{align*}
\medskip

As previously, all linear operators (such as $\nabla,\partial_\tau \dots$) have to be understood line by line in the previous expressions. Any dot product $\langle \cdot,\cdot\rangle_{\E}$ on some space $\E$ of functions defined on $\Omega$ has to be understood as $\langle \uu,\cchi\rangle_{\E} = \langle u_1,\chi_1\rangle_{\E} + \langle u_2,\chi_2\rangle_{\E}$. The nonnegative symmetric matrix $A^\ep(u_1,u_2)$ will be denoted $A^\ep(\uu)$ and we 
also will often denote by $\Qq^\ep(\uu)$ the associated quadratic form. Finally we will use the obvious notation 
\begin{align*}
 \ffi_\ep(\uu)=\begin{pmatrix}
            \ffi_1^\ep(u_1)\\
	    \ffi_2^\ep(u_2)
           \end{pmatrix},
\end{align*}
in such a way that according to the previous notations, $\ww=\ffi_\ep(\uu)$ and $\ffi_\ep^{-1}(\ww)=\uu$.
\vspace{2mm}\\
We are going to define recursively solutions to a discrete-time problem,
 all lying in $\V_n$ ($n$ is fixed for the moment). 
Recall that the initial condition $\uu^0$ belongs to $\L^2(\Omega)^2$. Given $\uu^{k-1} \in \L^2(\Omega)^2$, we study the following problem (for fixed $\ep>0,\,\sigma\in[0,1]$, $k\geq 1$):   \vspace{2mm}\\
\begin{minipage}{1\textwidth}
\vspace{5mm}
\underline{\textsf{Problem $\textnormal{P}^\ep_\sigma(k,\uu^{k-1})$}}:\vspace{2mm}\\
\emph{Find $\ww^{k}\in\V_n^2$ such that [denoting $\uu^k=\ffi_\ep^{-1}(\ww^k)$] we have for all $\cchi\in\V_n^2$,}
\begin{align*}
 \sigma\Big[\left\langle\cchi,\partial_{\tau} \uu^k\right\rangle_{\L^2(\Omega)}+\left\langle\nabla \cchi, A^\ep(\uu^k) \nabla \ww^k\right\rangle_{\L^2(\Omega)}
-\left\langle\cchi,\Rr^\ep(\uu^{k})\right\rangle_{\L^2(\Omega)}\Big]=-\ep\left\langle\cchi,\ww^k\right\rangle_{\H^1(\Omega)}.
\end{align*}
\end{minipage}

\vspace{2mm}

Since $\ww^k\in\V_n\subset\L^{\infty}(\Omega)$, $\uu^k$ takes its values in some compact set of $]0,+\infty[^2$, hence the coefficients of $A^\ep(\uu^k)$ all belong to $ \L^{\infty}(\Omega)$, and this ensures that all the previous brackets are well-defined. Furthermore, $\uu^k$ clearly belongs to $\L^\infty(\Omega)^2 \subset \L^2(\Omega)^2$, so that the previous inductive definition is consistant.
\vspace{2mm}\\
The collection of problems $(\textnormal{P}^\ep_\sigma(k,\uu^{k-1}))_{k=1 \dots N}$ is hence a discrete-in-time version of the $\V_n$-weak form of \eqref{eq:symetric} in which an $\ep$-perturbation and a parameter $\sigma$ have been added. 
\bigskip

We end up this subsection by introducing (bearing in mind the notations of Definition \ref{defipseps}) the:

\begin{definition}\label{defie}
We introduce the entropy
(for some vector function $\uu:\Omega^2\rightarrow{]0, +\infty[}^2$):
\begin{align*}
 \mathscr{E}_\ep(\uu):=\sum_{i=1}^2\int_{\Omega}\psi_i^\ep(u_i)(x)\, \dd x.
\end{align*}
\end{definition}

\subsection{\emph{A priori} estimate} \label{sub32}

\begin{Propo}\label{propo:apriori1}
Under the assumptions of Theorem \ref{theo:theo} and keeping in mind the definitions of Subsection~\ref{sub31} (and Definition \ref{defipseps}), there exists a constant $K_T$ depending only on $T$ and the data of the equation ($a_{ij}$, $r_i$, $s_{ij}$) such that for any
 $\tau>0$, $\ep>0$ (small enough), and $\sigma\in[0,1]$ and any sequence (of length less than $N$)
 $(\ww^j)_{1\leq j\leq k}$,  with $\ww^j$ solving $\textnormal{P}_\sigma^\ep(j,\uu^{j-1})$, the following entropy inequality holds:
\begin{align}\label{ineq:propo:apriori1}
 \sigma\mathscr{E}_\ep(\uu^k)+\tau\sigma\sum_{j=1}^k\int_\Omega \Qq^\ep(\uu^j)(\nabla \ww^j)\,\dd x+\ep\tau\sum_{j=1}^k \|\ww^j\|_{\H^1(\Omega)}^2 \leq K_T(\mathscr{E}_\ep(\uu^0) +1).
\end{align}
\end{Propo}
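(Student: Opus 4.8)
The plan is to test the weak formulation of Problem $\textnormal{P}^\ep_\sigma(k,\uu^{k-1})$ against $\cchi=\ww^k$ itself (an admissible element of $\V_n^2$), derive a one–step discrete entropy inequality, sum it over $j=1,\dots,k$, and close the estimate with the discrete Grönwall Lemma~\ref{lem:grondis}. Choosing $\cchi=\ww^k$ turns the equation into
\[
\sigma\langle\ww^k,\partial_\tau\uu^k\rangle_{\L^2(\Omega)}+\sigma\int_\Omega\Qq^\ep(\uu^k)(\nabla\ww^k)\,\dd x+\ep\|\ww^k\|_{\H^1(\Omega)}^2=\sigma\langle\ww^k,\Rr^\ep(\uu^k)\rangle_{\L^2(\Omega)}.
\]
Since $w_i^k=\ffi_i^\ep(u_i^k)=(\psi_i^\ep)'(u_i^k)$ and $\psi_i^\ep$ is convex, the convexity inequality of Lemma~\ref{lem:ineqconv:approx}(i) gives pointwise $(\psi_i^\ep)'(u_i^k)(u_i^k-u_i^{k-1})\ge\psi_i^\ep(u_i^k)-\psi_i^\ep(u_i^{k-1})$, hence $\langle\ww^k,\partial_\tau\uu^k\rangle_{\L^2(\Omega)}\ge\tau^{-1}(\mathscr{E}_\ep(\uu^k)-\mathscr{E}_\ep(\uu^{k-1}))$. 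Multiplying by $\tau$ and using the nonnegativity of $\Qq^\ep$ (the $\ep$-version of Proposition~\ref{prop:propertyA}) yields
\[
\sigma(\mathscr{E}_\ep(\uu^k)-\mathscr{E}_\ep(\uu^{k-1}))+\tau\sigma\int_\Omega\Qq^\ep(\uu^k)(\nabla\ww^k)\,\dd x+\ep\tau\|\ww^k\|_{\H^1(\Omega)}^2\le\tau\sigma\langle\ww^k,\Rr^\ep(\uu^k)\rangle_{\L^2(\Omega)}.
\]

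The heart of the argument is to bound the reaction term by $C(1+\mathscr{E}_\ep(\uu^k))$, up to a fraction of the entropy dissipation. Writing $\Rr^\ep=\Rr^+-\Rr^{-,\ep}$: for the growth part, $\langle\ww^k,\Rr^+(\uu^k)\rangle_{\L^2(\Omega)}=\sum_i r_i\int_\Omega u_i^k(\psi_i^\ep)'(u_i^k)$, and Lemma~\ref{lem:ineqconv:approx}(iv) (together with $\psi_i^\ep>0$) bounds this by $C(1+\mathscr{E}_\ep(\uu^k))$. For the death part, $-\langle\ww^k,\Rr^{-,\ep}(\uu^k)\rangle_{\L^2(\Omega)}\le\sum_i\int_\Omega(w_i^k)^-(\Rr^{-,\ep})_i$; since $\ffi_i^\ep$ is increasing and vanishes at $1$, the negative part $(w_i^k)^-$ is supported on $\{u_i^k<1\}$, where $u_i^k|\ffi_i^\ep(u_i^k)|$ is bounded by a constant independent of $\ep\in\,]0,1[$ (using $x\ffi_i(x)\to0$ as $x\to0^+$ from Lemma~\ref{lem:ineqconv}(iii), together with $|\ep x\ln x|\le e^{-1}$). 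Using $\gamma_\ep(s)\le s$ and $u_i^k<1$ one gets $(\Rr^{-,\ep})_i\le u_i^k(s_{ii}(u_i^k)+s_{ij}(u_j^k))$, hence $(w_i^k)^-(\Rr^{-,\ep})_i\le C(\sup_{[0,1]}s_{ii}+s_{ij}(u_j^k))$, and it only remains to estimate $\int_\Omega s_{ij}(u_j^k)$. In the strictly sublinear case, $s_{ij}(x)\le x+C$ and Lemma~\ref{lem:ineqconv:approx}(iv) gives $\int_\Omega u_j^k\le C(1+\mathscr{E}_\ep(\uu^k))$. In the case dominated by self diffusion, one writes $s_{ij}(x)\le\delta\,x\sqrt{d_{jj}(x)+a_{ij}(x)}+C_\delta$, applies Cauchy--Schwarz, bounds $\int_\Omega a_{ij}^\ep(u_j^k)$ by $C(1+\mathscr{E}_\ep(\uu^k))$ via Lemma~\ref{lem:ineqconv:approx}(iv), and controls the remaining factor through the coercivity estimates \eqref{ineq:propertyA1}--\eqref{ineq:propertyA2} (in their $\ep$-version) and a Sobolev embedding, picking $\delta$ small enough to absorb $\tfrac12\int_\Omega\Qq^\ep(\uu^k)(\nabla\ww^k)\,\dd x$. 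In all cases this produces
\[
\langle\ww^k,\Rr^\ep(\uu^k)\rangle_{\L^2(\Omega)}\le C(1+\mathscr{E}_\ep(\uu^k))+\tfrac12\int_\Omega\Qq^\ep(\uu^k)(\nabla\ww^k)\,\dd x,
\]
with $C$ depending only on the data.

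Inserting this bound and absorbing the half-dissipation to the left, the one–step inequality becomes, for every $j$,
\[
\sigma\mathscr{E}_\ep(\uu^j)+\tfrac{\tau\sigma}{2}\int_\Omega\Qq^\ep(\uu^j)(\nabla\ww^j)\,\dd x+\ep\tau\|\ww^j\|_{\H^1(\Omega)}^2\le\sigma\mathscr{E}_\ep(\uu^{j-1})+C\tau(1+\sigma\mathscr{E}_\ep(\uu^j)).
\]
Discarding the two nonnegative dissipation terms, $v_j:=\sigma\mathscr{E}_\ep(\uu^j)$ satisfies $v_j\le v_{j-1}+(C\tau)v_j+C\tau$, so for $\tau$ small enough (so that $C\tau<1$, the only relevant regime since $\tau=T/N\to0$ later) Lemma~\ref{lem:grondis} gives $v_k\le e^{kC\tau/(1-C\tau)}(v_0+1)\le K_T(\mathscr{E}_\ep(\uu^0)+1)$, because $k\tau\le T$. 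Summing the displayed one–step inequality over $j=1,\dots,k$, telescoping the $\mathscr{E}_\ep$ terms, inserting the bound just obtained for each $\sigma\mathscr{E}_\ep(\uu^j)$, and dropping $-\sigma\mathscr{E}_\ep(\uu^k)\le0$, one recovers $\tau\sigma\sum_{j=1}^k\int_\Omega\Qq^\ep(\uu^j)(\nabla\ww^j)\,\dd x+\ep\tau\sum_{j=1}^k\|\ww^j\|_{\H^1(\Omega)}^2\le K_T(\mathscr{E}_\ep(\uu^0)+1)$; adding this to the entropy bound and renaming $K_T$ gives \eqref{ineq:propo:apriori1}. The degenerate case $\sigma=0$ is immediate, since then the weak form forces $\ww^k\equiv0$ and the whole left-hand side vanishes.

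I expect the main obstacle to be the reaction-term estimate, specifically the subcase in which $s_{ij}$ is only ``dominated by self diffusion'': this is the one place where the entropy inequality alone does not suffice and one must feed in the coercivity/gradient estimates of Proposition~\ref{prop:propertyA} together with a Sobolev embedding, all the while keeping every constant — including those hidden in the truncations $\gamma_\ep$ and in $\psi_i^\ep$, $\ffi_i^\ep$ — uniform in $\ep$, $\tau$, $\sigma$ and in the Galerkin dimension $n$. Everything else amounts to bookkeeping around the discrete Grönwall inequality.
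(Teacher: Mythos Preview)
Your proof follows the paper's almost verbatim: test the discrete weak formulation against $\ww^j$, use convexity (Lemma~\ref{lem:ineqconv:approx}(i)) for the time-difference term, bound the reaction contribution by $C(1+\mathscr{E}_\ep(\uu^j))$, apply the discrete Gr\"onwall Lemma~\ref{lem:grondis}, and sum. The paper's treatment of the reaction term is in fact cruder than yours: it simply writes $s_{ij}(u)\le K(1+u)$ and then uses $\int_\Omega u_j\le C(1+\mathscr{E}_\ep(\uu))$ from Lemma~\ref{lem:ineqconv:approx}(iv); it does \emph{not} distinguish the ``dominated by self diffusion'' alternative of \textbf{H1} at this stage.

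The one genuine concern in your argument is precisely that subcase, and your handling of it is sketchy. You propose to write $s_{ij}(x)\le\delta\,x\sqrt{d_{jj}(x)+a_{ij}(x)}+C_\delta$ and then absorb the residual into the dissipation via \eqref{ineq:propertyA1}--\eqref{ineq:propertyA2} plus a Sobolev embedding. But the entropy dissipation $\int_\Omega\Qq^\ep(\uu^k)(\nabla\ww^k)$ at a single time step only controls gradients of quantities like $\sqrt{a_{ji}^\ep(u_j^k)}$ and $(u_j^k)^{(1-\alpha)/2}$; it does not obviously yield a bound on $\int_\Omega u_j^k\sqrt{d_{jj}(u_j^k)}$ when $d_{jj}$ is unbounded. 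In the paper it is the \emph{duality} estimate of Section~\ref{section5} that eventually controls $u_j\sqrt{d_{jj}(u_j)+a_{ij}(u_j)}$ in $\L^2(Q_T)$, and that comes \emph{after} the present Proposition. So either make your absorption step fully explicit --- specify exactly which norm of which function Sobolev produces and why it dominates $\int_\Omega u_j^k\sqrt{d_{jj}(u_j^k)+a_{ij}(u_j^k)}$ with a constant independent of $\ep,\tau,\sigma,n$ --- or, as the paper implicitly does, confine this step to the sublinear bound $s_{ij}(u)\le K(1+u)$ and defer the finer use of the ``dominated'' alternative to the limit passage in Section~\ref{section6}.
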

\begin{proof}
 Plug $\ww^j$ in $\textnormal{P}_\sigma^\ep(j,\uu^{j-1})$ to obtain
\begin{align*}
 \sigma\int_\Omega \ww^j\cdot
\partial_{\tau} \uu^j\,\dd x
+\sigma\int_\Omega \, \Qq^\ep(\uu^j)(\nabla \ww^j)\dd x+\ep\|\ww^j\|_{\H^1(\Omega}^2
=\sigma\int_\Omega \ww^j\cdot\Rr^\ep(\uu^{j})\, \dd x.
\end{align*}
Using $(i)$ of Lemma \ref{lem:ineqconv:approx}, one gets, since $w_i={\psi_i^\ep}'(u_i)$,
\begin{align*}
  \int_\Omega \ww^j\cdot\partial_{\tau} \uu^j\,\dd x = \frac{1}{\tau} \int_\Omega \ww^j\cdot\big[\uu^j-\uu^{j-1}\big]\,\dd x \geq \frac{1}{\tau}(\mathscr{E}(\uu^j)-\mathscr{E}(\uu^{j-1})).
\end{align*}
For the reaction term, we have
\begin{align*}
 \int_\Omega \ww^j\cdot\Rr^\ep(\uu^{j})\, \dd x = \int_\Omega w^j_1 R_{12}^\ep(u_1^{j},u_2^{j})\, \dd x + \int_\Omega w^j_2 R_{21}^\ep(u_2^{j},u_1^{j})\, \dd x.
\end{align*}
For the sake of clarity, we avoid writing the superscript $j$ for a few lines. Let us focus on the first term of the right-hand side (the second one will be similar):
\begin{align*}
 \int_\Omega w_1 R_{12}^\ep(u_1,u_2)\, \dd x &= \int_\Omega {\psi_1^\ep}'(u_1) u_1\big(r_1-\gamma_\ep[s_{11}(u_1)+s_{12}(u_2)]\big)\, \dd x \\
&= r_1\int_\Omega {\psi_1^\ep}'(u_1) u_1 \dd x-\int_{\Omega}{\psi_1^\ep}'(u_1) u_1  \gamma_\ep[s_{11}(u_1)+s_{12}(u_2)]\,\dd x.
\end{align*}
From $(iv)$ of Lemma \ref{lem:ineqconv:approx}, we get the existence of a constant $D$ such that 
\begin{align*}
 x{\psi^\ep_1}'(x) \leq D(1+\ep)(1+\psi_1^\ep(x)),
\end{align*}
which implies
\begin{align*}
 r_1\int_\Omega {\psi_1^\ep}'(u_1) u_1 \dd x \leq  r_1D(1+\ep)\left( \mu(\Omega)+ \mathscr{E}_\ep(u)\right).
\end{align*}
For the two other terms, we use the fact that ${\psi_i^\ep}'\geq 0$ on $[1,+\infty[$, so that 
\begin{align*}
-\int_{\Omega}{\psi_1^\ep}'(u_1) u_1  \gamma_\ep[s_{11}(u_1)+s_{12}(u_2)]\,\dd x \leq -\int_{u_1\leq 1}{\psi_1^\ep}'(u_1) u_1  \gamma_\ep[s_{11}(u_1)+s_{12}(u_2)],
\end{align*}
\begin{align*}
-\int_{\Omega}{\psi_1^\ep}'(u_1) u_1  \gamma_\ep[s_{11}(u_1)+s_{12}(u_2)]\,\dd x \leq (-B+\frac{1}{e})\int_{\Omega} s_{11}(u_1)\dd x + (-B+\frac{1}{e})\int_{\Omega} s_{12}(u_2)\dd x.
\end{align*}
Finally, we use $s_{ij}(u)\leq K(1+u)$ to get
$$
-\int_{\Omega}{\psi_1^\ep}'(u_1) u_1  \gamma_\ep[s_{11}(u_1)+s_{12}(u_2)]\,\dd x\leq C\left(\mu(\Omega)+\int_\Omega [u_1+u_2]\,\right).
$$
By lemma~\ref{lem:ineqconv:approx} (iv), we have 
$$
\int_\Omega [u_1+u_2] \leq D(1+\ep)(\mu(\Omega)+\mathscr{E}_\ep(u)).
$$
 Finally we have, recovering the superscripts that we omitted before:
\begin{align}\label{ineq:beforegron}
 \frac{\sigma}{\tau}(\mathscr{E}_\ep(\uu^j)-\mathscr{E}_\ep(\uu^{j-1})) +\sigma\int_\Omega \, \Qq^\ep(\uu^j)(\nabla \ww^j)\dd x+ \ep\|\ww^j\|_{\H^1(\Omega}^2
\leq  \sigma K(1+\mathscr{E}_\ep(\uu^{j})),
\end{align}
for some constant $K$ independent of $j,n,\ep,\tau,\sigma$. Hence if one takes $\tau$ small enough (such that $1-\tau K\geq1/2$),
 one has in particular : 
\begin{align*}
 \mathscr{E}_\ep(\uu^j)\leq\mathscr{E}_\ep(\uu^{j-1})+ \tau K\mathscr{E}_\ep(\uu^{j}) + \tau K,
\end{align*}
which implies by Lemma \ref{lem:grondis}, with $\theta=C :=\tau K$,
\begin{align*}
 \mathscr{E}_\ep(\uu^j)\leq e^{2j\theta}\Big[\mathscr{E}_\ep(\uu^0)+1\Big]\leq e^{2j\tau K}(\mathscr{E}_\ep(\uu^0)+1)\leq e^{2TK}(\mathscr{E}_\ep(\uu^0)+1).
\end{align*}
Plugging this last inequality in \eqref{ineq:beforegron}, we get for some constant $C_T>0$,
\begin{align*}
 \sigma(\mathscr{E}_\ep(\uu^j)-\mathscr{E}_\ep(\uu^{j-1})) +\tau\sigma\int_\Omega \, \Qq^\ep(\uu^j)(\nabla \ww^j)\dd x +\tau\ep\|\ww^j\|_{\H^1(\Omega}^2
\leq  C_T\tau(1+\mathscr{E}_\ep(\uu^0)),
\end{align*}
which, after summation over $j\in\{1,\dots,k\}$ and using $k\tau \leq T$, ends the Proof of Proposition~\ref{propo:apriori1}.
\end{proof}

\subsection{Existence and estimates}\label{sub33}

The \emph{a priori} estimate proven in the previous subsection leads to the following proposition about existence:

\begin{Propo}\label{propo:exist}
Consider the assumptions of Theorem \ref{theo:theo} and recall the Definitions of Subsection \ref{sub31} (and Definition \ref{defipseps}).
\par
 For fixed $\tau (=T/N)$ (small enough), $n\in\N^*$, $\ww^0\in\V_n^2$ and $\ep>0$,
 there exists a sequence $(\ww^k)_{1\leq k\leq N}$ such that $\ww^k$ solves $\textnormal{P}_1^\ep(k,\uu^{k-1})$ for all $k\in\{1,\dots,N\}$,
and which furthermore satisfies the estimate (with $K_T$ depending only on $T$ and the data of the equation ($a_{ij}$, $r_i$, $s_{ij}$)):
\begin{align}\label{ineq:prepropo:exist}
\hspace{-1cm} \mathscr{E}_\ep(\uu^k)+\tau\sum_{j=1}^k\int_\Omega \Qq^\ep(\uu^k)(\nabla \ww^k)\,\dd x+\ep\tau\sum_{j=1}^k \|\ww^j\|_{\H^1(\Omega)}^2 &\leq K_T(\mathscr{E}_\ep(\uu^0) +1),\\
\label{ineq:prepropo:exist2}
\hspace{-1cm} \|\uu^k\|_{\L^1(\Omega)} &\leq \Big[\|\uu^0\|_{\L^1(\Omega)} +O(\sqrt{\ep})\Big]e^{r^\tau T}, \\
\hspace{-1cm} \label{ineq:prepropo:exist3}
\tau\sum_{j=1}^k\|\Rr^{-,\ep}(\uu^{j})\|_{\L^1(\Omega)} &\leq \Big[\|\uu^0\|_{\L^1(\Omega)} +O(\sqrt{\ep})\Big]\, (1 + T \,\rr\, e^{r^\tau T}),
\end{align}
where the sequence of positive scalars $(r^\tau)_\tau$ goes to $\rr=\max(r_1,r_2)$ when $\tau\rightarrow 0$.
\end{Propo}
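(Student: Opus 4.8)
The plan is to combine a Leray--Schauder fixed point argument (Theorem~\ref{theo:brou}) with the \emph{a priori} estimate of Proposition~\ref{propo:apriori1}, proceeding by induction on $k$. Assume that $\ww^{k-1}\in\V_n^2$ has been constructed (with $\uu^{k-1}=\ffi_\ep^{-1}(\ww^{k-1})\in\L^\infty(\Omega)^2$; for $k=1$ we start from the given $\ww^0$). To solve $\textnormal{P}^\ep_1(k,\uu^{k-1})$, I would introduce the map $T:[0,1]\times\V_n^2\to\V_n^2$ defined so that $T(\sigma,\cdot)$ has a fixed point exactly when that fixed point solves $\textnormal{P}^\ep_\sigma(k,\uu^{k-1})$: concretely, given $\ww\in\V_n^2$, set $\uu=\ffi_\ep^{-1}(\ww)$ (well defined and mapping into a compact subset of $]0,+\infty[^2$ since $\V_n\subset\L^\infty$ and $\ffi_i^\ep$ is a strictly increasing bijection onto $\R$), and let $T(\sigma,\ww)$ be the unique element of $\V_n^2$ representing, via the $\H^1(\Omega)$ Riesz isomorphism on the finite-dimensional space $\V_n$, the linear functional $\cchi\mapsto -\frac{\sigma}{\ep}\big[\langle\cchi,\partial_\tau\uu\rangle_{\L^2}+\langle\nabla\cchi,A^\ep(\uu)\nabla\ww\rangle_{\L^2}-\langle\cchi,\Rr^\ep(\uu)\rangle_{\L^2}\big]$. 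Then $T(0,\cdot)\equiv0$, and $T$ is continuous because $\ffi_\ep^{-1}$, $A^\ep$, $\Rr^\ep$ are continuous and all operations take place in a fixed finite-dimensional space where $\L^\infty$, $\L^2$ and $\H^1$ norms are equivalent.

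The required uniform \emph{a priori} bound on fixed points is the crux of the matter, and it is exactly what Proposition~\ref{propo:apriori1} provides: if $\ww=\ww^k$ is a fixed point of $T(\sigma,\cdot)$, it solves $\textnormal{P}^\ep_\sigma(k,\uu^{k-1})$, so applying \eqref{ineq:propo:apriori1} to the length-$k$ sequence $(\ww^1,\dots,\ww^{k-1},\ww)$ (the earlier terms being the already-constructed genuine solutions, which solve $\textnormal{P}^\ep_1$, hence also $\textnormal{P}^\ep_\sigma$ is irrelevant for them — more carefully, one reruns the entropy computation keeping $\sigma$ only on the last step, or simply notes the estimate is stated for arbitrary such sequences) yields
\begin{align*}
\ep\tau\,\|\ww\|_{\H^1(\Omega)}^2 \leq K_T(\mathscr{E}_\ep(\uu^0)+1) + (\text{contribution of earlier steps}),
\end{align*}
a bound independent of $\sigma$. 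Thus $\|\ww\|_{\H^1(\Omega)}<R$ for a suitable $R=R(T,\ep,\tau,n,\ww^0)$, and Theorem~\ref{theo:brou} gives a fixed point of $T(1,\cdot)$, i.e.\ a solution $\ww^k$ of $\textnormal{P}^\ep_1(k,\uu^{k-1})$. This closes the induction and produces the sequence $(\ww^k)_{1\le k\le N}$; estimate \eqref{ineq:prepropo:exist} is then literally \eqref{ineq:propo:apriori1} with $\sigma=1$.

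It remains to prove the $\L^1$ bounds \eqref{ineq:prepropo:exist2}--\eqref{ineq:prepropo:exist3}. For these I would test $\textnormal{P}^\ep_1(k,\uu^k)$ with the constant vector $\cchi=\mathbf{1}_\Omega$ (legitimate since $\mathbf{1}_\Omega\in\V_n$ by assumption), which kills the divergence term and leaves, componentwise,
\begin{align*}
\int_\Omega \partial_\tau u_i^k\,\dd x + \ep\int_\Omega w_i^k\,\dd x = \int_\Omega u_i^k\big(r_i-\gamma_\ep[\cdots]\big)\,\dd x.
\end{align*}
Since $u_i^k>0$ and the truncated competition term is nonnegative, the right side is $\le r_i\int_\Omega u_i^k$, and the term $\ep\int_\Omega w_i^k$ is controlled by Cauchy--Schwarz and the already-established $\H^1$-bound, contributing an $O(\sqrt\ep)$ after summation (using \eqref{ineq:prepropo:exist} to bound $\ep\tau\sum_j\|\ww^j\|_{\H^1}^2$, then $\ep\sum_j|\int w_i^j|\le \ep\tau\sum_j\|\ww^j\|_{\H^1}\cdot|\Omega|^{1/2}/\sqrt{\tau}\lesssim\sqrt\ep$). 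This gives $\|\uu^k\|_{\L^1}\le\|\uu^{k-1}\|_{\L^1}+\tau r^\tau\|\uu^k\|_{\L^1}+O(\sqrt\ep)\tau$ with $r^\tau\to\rr$, and the discrete Grönwall Lemma~\ref{lem:grondis} yields \eqref{ineq:prepropo:exist2}. For \eqref{ineq:prepropo:exist3}, rearrange the tested identity to isolate $\int_\Omega u_i^k\gamma_\ep[\cdots] = \int_\Omega R_i^{-,\ep}$, bound it by $r_i\int_\Omega u_i^k - \int_\Omega\partial_\tau u_i^k - \ep\int_\Omega w_i^k$, sum over $j=1,\dots,k$ so the finite-difference terms telescope to $\|\uu^k\|_{\L^1}-\|\uu^0\|_{\L^1}\ge -\|\uu^0\|_{\L^1}$, insert \eqref{ineq:prepropo:exist2} for the $\tau\sum r_i\int u_i^j$ term, and again absorb the $\ep$-terms into $O(\sqrt\ep)$. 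The main obstacle is bookkeeping: making sure the entropy estimate can be applied to the partly-$\sigma$-weighted sequence arising in the fixed-point step, and tracking that all $\ep$-dependent error terms genuinely collapse to $O(\sqrt\ep)$ uniformly in $n$ and $\tau$.
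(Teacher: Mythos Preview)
Your proposal is correct and follows essentially the same approach as the paper: Leray--Schauder (Theorem~\ref{theo:brou}) on $\V_n^2$ with the $\H^1$ Riesz map, using Proposition~\ref{propo:apriori1} for the uniform fixed-point bound, and then testing with $\mathbf{1}_\Omega$ plus discrete Gr\"onwall for the $\L^1$ estimates.

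One remark on the obstacle you flag. You worry about applying the entropy estimate to a sequence in which the first $k-1$ steps carry $\sigma=1$ and only the last carries a variable $\sigma$. The paper avoids this bookkeeping entirely: it proves existence only for the first iteration $\uu^0\Rightarrow\ww^1$ (so the sequence in Proposition~\ref{propo:apriori1} has length $1$ and a single $\sigma$), and declares the later steps ``similar''. Indeed, at step $k$ you can simply rerun the $k=1$ argument with $\uu^{k-1}$ playing the role of the initial datum; the fixed-point radius then depends on $\mathscr{E}_\ep(\uu^{k-1})$, which is finite by the inductive hypothesis. This is cleaner than tracking a mixed-$\sigma$ sequence. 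Your $O(\sqrt\ep)$ control of the $\ep\int w_i$ term via Cauchy--Schwarz in $j$ and \eqref{ineq:prepropo:exist} is also the paper's mechanism, up to cosmetic differences.
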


\begin{proof}
\textit{Step 1: Estimates }\\
Let us  notice that the two last estimates \eqref{ineq:prepropo:exist2}--\eqref{ineq:prepropo:exist3} are essentially a
 discrete version of the classical formal $\L^1$~estimate obtained by integrating our system on $\Omega$.
 In order to recover this rigorously, we notice that since $\mathbf{1}_\Omega\in\V_n$, it is an admissible test function for the problem $\textnormal{P}_1^\ep(j,\uu^{j-1})$ for all $j \in\llbracket 1,k\rrbracket$, $ k \in\llbracket 1,N\rrbracket$, and we hence get
\begin{align}
\label{eq:tosum} \|\uu^j\|_{\L^1(\Omega)}- \|\uu^{j-1}\|_{\L^1(\Omega)}
+\ep\tau\int_\Omega\ww^j+\tau \|\Rr^{-,\ep}(\uu^{j})\|_{\L^1(\Omega)}&=\tau \|\Rr^+(\uu^{j})\|_{\L^1(\Omega)}.
\end{align}
It follows easily from the definition of $\Rr^+$ that the right-hand side of the previous inequality is not 
larger than $\tau \rr \|\uu^{j}\|_{\L^1(\Omega)}$. 
 Thus, 
\begin{align*}
 \|\uu^j\|_{\L^1(\Omega)} \leq  \|\uu^{j-1}\|_{\L^1(\Omega)} +\tau\rr \|\uu^j\|_{\L^1(\Omega)}-\ep\tau\int_\Omega 
[\ww_1^j+\ww^j_2].
\end{align*}
We notice then that,
$$
-\ep\tau\int_\Omega \ww_1^j+\ww_2^j=\sqrt{\ep}\tau \left(|\Omega|+\ep\|\ww^j\|_{L^2(\Omega)}^2\right).
$$
This leads to 
\begin{align*}
 \|\uu^j\|_{\L^1(\Omega)} \leq  \|\uu^{j-1}\|_{\L^1(\Omega)} +\tau\rr \|\uu^j\|_{\L^1(\Omega)}+\sqrt{\ep}\tau \left(|\Omega|+\ep\|\ww^j\|_{L^2(\Omega)}^2\right),
\end{align*}
which implies by Lemma \ref{lem:grondis} 
\begin{align*}
\|\uu^j\|_{\L^1(\Omega)} \leq \left[\|\uu^0\|_{\L^1(\Omega)}+\sqrt{\ep}\left( j\tau|\Omega|+\ep\tau\sum_{k=1}^j \|\ww^j\|_{L^2(\Omega)}^2\right)\right] \exp\left\{j\tau\frac{\rr}{1-\tau\rr}\right\}.
\end{align*}
Using the entropy estimate, one has
$$
\ep\tau\sum_{k=1}^j \|\ww^j\|_{L^2(\Omega)}^2\leq K_T(\mathscr{E}_\ep(\uu^0) +1)\qquad \text{ and }\qquad j\tau|\Omega|\leq |\Omega| T.
$$
Therefore, we obtain 
$$
 \|\uu^j\|_{\L^1(\Omega)}\leq \Big[\|\uu^0\|_{\L^1(\Omega)}+\sqrt{\ep}\left( T|\Omega|+ K_T(\mathscr{E}_\ep(\uu^0) +1)\right)\Big] e^{r^\tau T},
$$
with $r(\tau)$ having the mentioned property, that is precisely \eqref{ineq:prepropo:exist2}. 
\par
Quite similarly, we sum up in \eqref{eq:tosum} and obtain after using the bound on $\|\ww\|_1$:
$$
\|u^k\|_{L^1(\Omega)}+\tau \sum_{j=1}^k\|\Rr^{-,\ep}(\uu^{j})\|_{\L^1(\Omega)}=\tau \sum_{j=1}^k \|\Rr^+(\uu^{j})\|_{\L^1(\Omega)}+\sqrt{\ep}\left( T|\Omega|+ K_T(\mathscr{E}_\ep(\uu^0) +1)\right),
$$
and we conclude the estimate using the previous bound.
\vspace{2mm}\\
\textit{Step 2: Existence }

In the rest of the Proof,
 we will work in the finite dimensional Hilbert space $\E=\V_n^2$ with the dot product $\langle\cdot,\cdot\rangle_{\H^1(\Omega)}$ and the associated norm. We will proceed by induction and only prove the first iteration $\uu^0\in\L^2(\Omega)^2\Rightarrow \exists \,\ww^1 $ solving $\textnormal{P}_\sigma^\ep(1,\uu^{0})$, all the other
induction steps will be similar.

 First notice that the problem $\textnormal{P}_\sigma^\ep(1,\uu^{0})$ can be seen in the following way\vspace{2mm}\\
\begin{minipage}{1\textwidth}
\begin{center}
\medskip

Find $\ww^{1}\in\E$ such that 
$\forall \cchi\in \E,\,\sigma \L_{\uu^{0},\ww^1}(\cchi)=-\ep\left\langle\cchi,\ww^1\right\rangle_{\H^1(\Omega)}, $ 
\end{center}
\end{minipage}
\vspace{2mm}\\
where, for $\ww,\cchi \in \E$, $\L_{\uu^0,\ww}$ is defined by
\begin{align*}
 \L_{\uu^{0},\ww}(\cchi):=\frac{1}{\tau}\left\langle\cchi,\ffi_\ep^{-1}(\ww)-\uu^0\right\rangle_{\L^2(\Omega)}+\left\langle\nabla \cchi, A^\ep(\ffi_\ep^{-1}(\ww)) \nabla \ww\right\rangle_{\L^2(\Omega)}
-\left\langle\cchi,\Rr^\ep(\ffi_\ep^{-1}(\ww))\right\rangle_{\L^2(\Omega)}.
\end{align*}
As noticed before, for $\ww\in\E\subset\L^\infty(\Omega)^2$, $\ffi_\ep^{-1}(\ww)$ takes its values in some compact set of ${]0, +\infty[}^2$, so that the coefficients of $A^\ep(\ffi_\ep^{-1}(\ww))$ lie in $\L^\infty(\Omega)$, whence $\L_{\uu^{0},\ww^1}\in\V_n^\star$.\vspace{2mm}\\
Let us now define a map 
\begin{align*}
 T:[0,1]\times \E&\longrightarrow\E\\
 (\sigma,\vv)&\longmapsto \ww,
\end{align*}
such that:
\begin{align*}
 \forall \cchi\in \E\quad  \sigma \, \L_{\uu^0,\vv}(\cchi)=-\ep\left\langle\cchi,\ww\right\rangle_{\H^1(\Omega)}.
\end{align*}
Such a map is well-defined because of the usual representation theorem for finite dimensional spaces.

\begin{lem}
The map $T$ is continuous.
\end{lem}
\begin{proof}
Let $v^1\in E$. We want to prove that $T$ is continuous at $v^1$. 
 Since all norms are equivalent on $\E$, one can use the metric $|\cdot|+\|\cdot\|_{\L^{\infty}(\Omega)}$ on $[0,1]\times \E$ and $\|\cdot\|_{\H^1(\Omega)}$ on $\E$. In all what follows, we assume that $v^2$ belongs to the following open set of $E$
 $$
 \|v^1-v^2\|_\infty+\|v^1-v^2\|_{H^1}<1.
 $$
 Not that this means that $v^2$ belongs to a compact set of $E$ (just consider the inequalities as large)
 that we denote $K$. 
 In fact for $\|\cchi\|_{H^1}\leq 1$, one has
\begin{align*}
\Big|[\L_{\uu^0,\vv_1}-\L_{\uu^0,\vv_2}](\cchi)\Big| &\leq \frac{1}{\tau}\|\ffi_\ep^{-1}(\vv_1)-\ffi_\ep^{-1}(\vv_2)\|_{\L^2(\Omega)}\\
& +\|A^\ep(\ffi_\ep^{-1}(\vv_1)) \nabla \vv_1-A^\ep(\ffi_\ep^{-1}(\vv_2)) \nabla \vv_2\|_{\L^2\Omega)},\\
&+ \|\Rr^\ep(\ffi_\ep^{-1}(\vv_1))-\Rr^\ep(\ffi_\ep^{-1}(\vv_2))\|_{\L^2(\Omega)},
\end{align*}
and one can write for the second term of the right-hand side:
\begin{align*}
 \|A^\ep(\ffi_\ep^{-1}(\vv_1)) \nabla \vv_1-A^\ep(\ffi_\ep^{-1}(\vv_2)) \nabla \vv_2\|_{\L^2\Omega)} &\leq \|\big\{A^\ep(\ffi_\ep^{-1}(\vv_1))-A^\ep(\ffi_\ep^{-1}(\vv_2))\big\} \nabla \vv_1\|_{\L^2\Omega)} \\
&+\|A^\ep(\ffi_\ep^{-1}(\vv_2)) \big\{\nabla \vv_1- \nabla \vv_2\big\}\|_{\L^2\Omega)}.
\end{align*}

Since $\ffi_\ep^{-1}$, $A^\ep\circ\ffi_\ep^{-1}$ and $\Rr^\ep\circ\ffi_\ep^{-1}$ are all continuous, they are uniformly continuous (and bounded) on every compact of $\R^2$,
 and particularly on $K$. Let $\omega_K$ be a continuity modulus for all the previous functions on this compact. Hence, if $\vv_1,\vv_2$ take their values in $K$ and $\|\vv_1-\vv_2\|_{\L^{\infty}(\Omega)}\leq\eta$, we have
\begin{align*}
\Big|[\L_{\uu^0,\vv_1}-\L_{\uu^0,\vv_2}](\cchi)\Big| &\leq \frac{\omega_K(\eta)}{\tau}
+\omega_K(\eta)\mu(\Omega)^{1/2}\|\nabla \vv_1\|_{\L^{\infty}(\Omega)}\\
&+\|A^\ep(\ffi_\ep^{-1}(\vv_2))\|_{\L^{\infty}(\Omega)}\|\vv_1-\vv_2\|_{\H^1(\Omega)}+\omega_K(\eta),
\end{align*}
where we used the continuous injection $\L^{\infty}(\Omega)\hookrightarrow\L^2(\Omega)$. 
Since all norms are equivalent on $\E$,
\begin{align*}
\Big|[\L_{\uu^0,\vv_1}-\L_{\uu^0,\vv_2}](\cchi)\Big| \leq C_{K,\Omega}(\omega_K(\eta)+\eta),
\end{align*}
hence using the very definition of $T$ :
\begin{align*}
 \|T(\sigma,\vv_1)-T(\sigma,\vv_2)\|_{\H^1(\Omega)}=\frac{\sigma}{\ep}\sup_{\|\cchi\|_{H^1}\leq 1}\Big|[\L_{\uu^0,\vv_1}-\L_{\uu^0,\vv_2}](\cchi)\Big|\leq \frac{\sigma}{\ep}C_{K,\Omega}(\omega_K(\eta)+\eta),
\end{align*}
which gives the continuity w.r.t. the second variable. The continuity w.r.t. both variables
 is then straightforward.
 \end{proof}
\par
 It is now time to use Proposition \ref{propo:apriori1} to get an \emph{a priori} estimate on any fixed point of $T(\sigma,\cdot)$, for any $\sigma\in[0,1]$. In fact, the case $k=1$ of this Proposition exactly tells us that all this fixed points are in the ball of center $0$ and radius $K_T(\mathscr{E}(\uu^0)+1)/\ep\tau$. Since clearly $T(0,\cdot) \equiv 0$, we now can apply  
Theorem \ref{theo:brou} to see that $T(1,\cdot)$ has a fixed point, which is exactly the existence of $\ww^1$ and hence the first step of our induction machinery. Inequality \eqref{ineq:prepropo:exist} is then a direct consequence of \eqref{ineq:propo:apriori1} (with $\sigma=1$).
\end{proof}
The previous Proposition shows that 
(for fixed $\tau(=T/N)$ small enough, $n\in\N^*$, $\uu^0\in\L^2(\Omega)^2$, and $\ep>0$)
 there exists a sequence $(\ww^k)_{1\leq k\leq N}$ such that, for all $k\in\{1,\dots,N\}$, denoting $\uu^k:=\ffi_\ep^{-1}(\ww^k)$, we have for all $\cchi\in\V_n^2$:
\begin{align}\label{eq:propo:weakform}
\left\langle\cchi,\partial_{\tau} \uu^k\right\rangle_{\L^2(\Omega)}+\left\langle\nabla \cchi, A^\ep(\uu^k) \nabla \ww^k\right\rangle_{\L^2(\Omega)}+ \ep\left\langle\cchi,\ww^k\right\rangle_{\H^1(\Omega)} = \left\langle\cchi,\Rr^\ep(\uu^{k})\right\rangle_{\L^2(\Omega)},\end{align}
that we also can write ($i\neq j =1,2$)
\begin{align*} 
\partial_\tau \mathbb{P}_n u^k_i  -  \mathbb{P}_n \Delta\Big[ a_{ii}^\ep(u^k_i)+u_i^k a_{ij}(u_j^k)+ \ep u_i^ku_j^k\Big] +\ep w_i^k -\ep \Delta w_i^k = \mathbb{P}_n\Big[R_{ij}^\ep(u_i^{k},u_j^{k})\Big],
\end{align*}
where $\mathbb{P}_n$ is the $\L^2$-orthonormal projection on $\V_n$. 

\section{Asymptotic with respect to $n$}\label{section4}

 This Section is devoted to the passage to the limit in the space discretization ($n \to +\infty$), which can be
 summarized by the following Proposition:
 
 \begin{Propo}\label{propo:exist2}
Consider the assumptions of Theorem \ref{theo:theo}  and recall the notations of Definitions \ref{defipseps}, \ref{defie}, \ref{reactionpm} and \ref{timestep}. For fixed $\tau (=T/N)$ (small enough), and $\ep>0$,
 there exists a sequence $(\uu^k)_{1\leq k\leq N}$ of $\L^p(\Omega)^2$ for some $p>1$ (and a corresponding sequence $(\ww^k)_{1\leq k\leq N}$) such that for all  $\cchi \in \mathscr{C}^{\infty}_{\nu}(\overline{\Omega})^2$ 
(and for all $k\in\{1,\dots,N\}$),
 \begin{equation}\label{eq:weak_form}
\frac{1}{\tau}\langle \chi_i,u^{k}_i\rangle_{\L^2(\Omega)}-\frac{1}{\tau}\langle \chi_i,u^{k-1}_i\rangle_{\L^2(\Omega)} - \langle \Delta \chi_i,\Big[ a_{ii}^\ep(u^k_i)+u_i^k a_{ij} (u_j^k)+ \ep u_i^k u_j^k \Big]\rangle_{\L^2(\Omega)} 
\end{equation} 
$$+\ep \langle \chi_i- \Delta \chi_i,w_i^{k}\rangle_{\L^2(\Omega)} = \left\langle \chi_i,R_{ij}^\ep(u_i^{k},u_j^{k}) \right\rangle_{\L^2(\Omega)},$$
and which furthermore satisfies the estimates (with $K_T$ depending only on $T$ and the 
data of the equation ($a_{ij}$, $r_i$, $s_{ij}$)):
\begin{align}\label{ineq:afterlim1}
 \mathscr{E}_\ep(\uu^k)+\ep\tau\sum_{j=1}^k \|\ww^j\|_{\H^1(\Omega)}^2 \leq K_T(\mathscr{E}_\ep(\uu^0) +1);
\end{align} 
\begin{align}\label{ineq:afterlim2}
\tau\sum_{j=1}^k \int_\Omega  \left|\nabla \beta_\alpha(\uu^{j})\right|^2 \dd x \leq  K_T(\mathscr{E}_\ep(\uu^0) +1),
\end{align}
where $\beta_\alpha(x)= x^{\frac{1-\alpha}{2}}$;
\begin{align}
\label{ineq:afterlim3}
\hspace{-1cm} \|\uu^k\|_{\L^1(\Omega)} &\leq \|\uu^0\|_{\L^1(\Omega)} e^{T\, r_{\tau}}; \\
\hspace{-1cm} \label{ineq:afterlim4} \ep \tau \sum_{j=1}^k \|\ww^j\|_{\L^1(\Omega)} + \tau\sum_{j=1}^k\|\Rr^{-,\ep}(\uu^{j})\|_{\L^1(\Omega)} &\leq \|\uu^0\|_{\L^1(\Omega)}\Big[1 + T \rr e^{r(\tau) T}\Big].
\end{align}
\end{Propo}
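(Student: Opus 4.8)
The plan is to pass to the limit $n \to +\infty$ in the finite-dimensional weak formulation \eqref{eq:propo:weakform}, using the uniform-in-$n$ \emph{a priori} bounds from Proposition \ref{propo:exist}. The key point is that for fixed $\tau$ and $\ep$, the entropy estimate \eqref{ineq:prepropo:exist} controls $\sum_j \tau \|\ww^j_n\|_{\H^1(\Omega)}^2$ and (via Lemma \ref{lem:ineqconv:approx} (iv)) it controls $\mathscr{E}_\ep(\uu^k_n)$, hence $\int_\Omega (u_{1,n}^k + u_{2,n}^k)\,\dd x$; moreover estimate \eqref{ineq:propertyA1} of Proposition \ref{prop:propertyA} (valid for $A^\ep$) together with the entropy bound gives control of $\int_\Omega \frac{1}{u_i}(a_{ij}^\ep)'(u_i)|\nabla u_i^k|^2 \, \dd x$, and using assumption \textbf{H2} ($x^\alpha (a_{ij})'(x) \geq C$) one recognizes $\frac{1}{u_i}(a_{ij})'(u_i)|\nabla u_i|^2 \gtrsim |\nabla u_i^{(1-\alpha)/2}|^2 = |\nabla \beta_\alpha(u_i)|^2$, which is exactly \eqref{ineq:afterlim2}. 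First I would fix $k$ and extract, by a diagonal/finite-induction argument over $k = 1, \dots, N$, subsequences $\ww^k_n \rightharpoonup \ww^k$ weakly in $\H^1(\Omega)^2$ and (up to further extraction) a.e., so that $\uu^k_n = \ffi_\ep^{-1}(\ww^k_n) \to \uu^k = \ffi_\ep^{-1}(\ww^k)$ a.e. by continuity of $\ffi_\ep^{-1}$.

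Next I would upgrade the a.e.\ convergence of $\uu^k_n$ to strong $\L^1$ (indeed $\L^p$ for some $p>1$) convergence: the $\beta_\alpha$-gradient bound \eqref{ineq:afterlim2} plus a Sobolev/Poincaré argument gives a uniform $\L^q$ bound on $\beta_\alpha(u^k_{i,n})$ with $q>2$, hence a uniform $\L^{q/(1-\alpha)}$ bound on $u^k_{i,n}$ with $q/(1-\alpha) > 2 > 1$; combined with a.e.\ convergence and Vitali's theorem this yields strong convergence of $\uu^k_n$ in $\L^p(\Omega)^2$ for some $p>1$, and more importantly equi-integrability of $u^k_{i,n}$, $a_{ij}^\ep(u^k_{j,n})$, $u^k_{i,n} a_{ij}(u^k_{j,n})$, $\ep u^k_{i,n} u^k_{j,n}$ and of the reaction terms $R_{ij}^\ep(u^k_{i,n}, u^k_{j,n})$ (using \textbf{H1} and that $\gamma_\ep$ is bounded for fixed $\ep$). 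This justifies passing to the limit termwise in \eqref{eq:propo:weakform}: the time-difference term and the $\ep \langle \chi_i - \Delta\chi_i, w_i^k\rangle$ term pass by weak $\H^1$ convergence (after removing $\mathbb{P}_n$, which converges strongly to the identity on the fixed smooth test function $\chi_i$), and the Laplacian/diffusion term and reaction term pass by the strong $\L^1$ convergence together with equi-integrability. The test function space enlarges from $\V_n$ to all of $\mathscr{C}^\infty_\nu(\overline{\Omega})$ by the density of $\bigcup_n \V_n$ in $\H^2_\nu(\Omega)$ (or directly since $\mathbb{P}_n \chi_i \to \chi_i$ in $\H^2$).

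The estimates \eqref{ineq:afterlim1}, \eqref{ineq:afterlim3}, \eqref{ineq:afterlim4} then follow by passing to the limit in \eqref{ineq:prepropo:exist}, \eqref{ineq:prepropo:exist2}, \eqref{ineq:prepropo:exist3}: the entropy $\mathscr{E}_\ep$ is lower semicontinuous under a.e.\ convergence by Fatou (since $\psi_i^\ep$ is bounded below by Lemma \ref{lem:ineqconv:approx} (ii)--(iii)), the $\|\ww^j\|_{\H^1}^2$ sum is l.s.c.\ under weak $\H^1$ convergence, the $\L^1$ norms of $\uu^k$ pass by strong $\L^1$ convergence, and $\|\Rr^{-,\ep}(\uu^j)\|_{\L^1}$ is l.s.c.\ by Fatou; one also drops the quadratic-form term $\tau\sigma \sum_j \int_\Omega \Qq^\ep(\uu^j)(\nabla\ww^j)$ from the left-hand side of \eqref{ineq:prepropo:exist} before passing to the limit, keeping only the weaker consequence \eqref{ineq:afterlim2} which we extract beforehand by l.s.c.\ of $\int |\nabla \beta_\alpha(\uu^j)|^2$. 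The main obstacle is the strong compactness of $\uu^k_n$: weak $\H^1$ convergence of $\ww^k_n$ alone does not immediately give strong convergence of $\uu^k_n$ because $\ffi_\ep^{-1}$ is only continuous, not Lipschitz up to the relevant singularities; the resolution is precisely the $\beta_\alpha$-gradient estimate \eqref{ineq:afterlim2}, which provides the missing integrability/compactness for $\uu^k_n$ itself and is what forces us to keep track of assumption \textbf{H2} throughout. A secondary technical nuisance is handling the projection $\mathbb{P}_n$ inside the nonlinear terms, but this is harmless since in \eqref{eq:propo:weakform} the projection sits only against the smooth test function $\cchi$, not against the nonlinearities.
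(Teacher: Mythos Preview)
Your overall strategy matches the paper's, but there is a genuine gap in the treatment of the cross-diffusion terms $u_i^k a_{ij}(u_j^k)$ and $\ep u_1^k u_2^k$, and a related computational error in your Sobolev step.

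First, the exponent claim ``$\L^{q/(1-\alpha)}$ bound on $u^k_{i,n}$'' is wrong: since $\beta_\alpha(u)=u^{(1-\alpha)/2}$, a bound on $\beta_\alpha(u)$ in $\L^q$ only gives $u\in\L^{q(1-\alpha)/2}$, which for $\alpha$ close to $1$ need not exceed $1$. The paper instead exploits the $\ep$-perturbation: since $(a_{ji}^\ep)'(x)=a_{ji}'(x)+\ep\ge\ep$, estimate \eqref{ineq:propertyA1} yields $4\ep|\nabla\sqrt{u_i^{k,n}}|^2\le \Qq^\ep(\uu^k_n)(\nabla\ww^k_n)$, so (for fixed $\ep$) $\sqrt{u_i^{k,n}}$ is bounded in $\H^1(\Omega)$ and hence $u_i^{k,n}$ is bounded in $\L^{d/(d-2)}$ in dimension $d\ge 3$.

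Second, and more importantly, this $\L^p$ bound on each $u_i$ individually does \emph{not} give equi-integrability of the products $u_1^{k,n}u_2^{k,n}$ or $u_i^{k,n}a_{ij}(u_j^{k,n})$ in arbitrary dimension (for $d\ge 4$ one has $d/(d-2)\le 2$, and H\"older only places $u_1u_2$ in $\L^1$). The paper closes this gap via the \emph{second} quadratic-form estimate \eqref{ineq:propertyA2}, which bounds $\sqrt{a_{21}^\ep(u_1^{k,n})a_{12}^\ep(u_2^{k,n})}$ in $\H^1(\Omega)$; combined with a Poincar\'e--Wirtinger argument and the entropy bound, this puts $a_{21}^\ep(u_1^{k,n})a_{12}^\ep(u_2^{k,n})$ in $\L^p$ for some $p>1$. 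Expanding $a_{21}^\ep(u_1)a_{12}^\ep(u_2)=(a_{21}(u_1)+\ep u_1)(a_{12}(u_2)+\ep u_2)$ and using nonnegativity of each summand then yields uniform $\L^p$ bounds (hence equi-integrability) for $u_1^{k,n}u_2^{k,n}$, $u_1^{k,n}a_{12}(u_2^{k,n})$ and $u_2^{k,n}a_{21}(u_1^{k,n})$ separately. Without invoking \eqref{ineq:propertyA2} your passage to the limit in the Laplacian term of \eqref{eq:weak_form} is not justified for $d\ge 4$.
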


\begin{proof}
We first recall the bounds that hold on $\uu^k_n$ (and $\ww^k_n$), whose existence is given by Proposition
\ref{propo:exist}.
\par 
 Using \eqref{ineq:propertyA1}, for all $k\in\{1,\dots,N\}$ and $n\in\N^*$, we have
\begin{align*}
\Qq^\ep(\uu^k_n)(\nabla \ww^k_n)  &\geq \frac{1}{u_1^{k,n}}{{a_{21}^\ep}'}(u_1^{k,n})|\nabla u_1^{k,n}|^2 +
 \frac{1}{u_2^{k,n}}{a_{12}^\ep}'(u_2^{k,n})|\nabla u_2^{k,n}|^2.
\end{align*}
Since $a_{ij}^\ep(x) = a_{ij}(x) + \ep x$, assumption \textnormal{\textbf{H2}} leads to
\begin{align}
\label{ineq:mino1}\hspace{-1.5cm}\Qq^\ep(\uu^k_n)(\nabla\ww^k_n) \geq \frac{4}{(1-\alpha)^2} \left|\nabla\beta_\alpha(u_1^{k,n})\right|^2  + \frac{4}{(1-\alpha)^2} \left|\nabla\beta_\alpha(u_2^{k,n})\right|^2 + 4\ep \left|\nabla\sqrt{u_1^{k,n}}\right|^2 + 4\ep \left|\nabla\sqrt{u_2^{k,n}}\right|^2,
\end{align}
(where we recall that $\beta_\alpha(x):=x^{\frac{1-\alpha}{2}}$).
We also have, because of \eqref{ineq:prepropo:exist2}, the boundedness of $(\uu_n^k)_{n\in\N^*}$ in $\L^1(\Omega)$. 
Since the asymptotics that
 we are studying is only w.r.t. $n$ (that is, $\var$ and $\tau$ are fixed), we see finally that $\left(\sqrt{u_1^{k,n}}\right)_{n\in\N^*}$ and $\left(\sqrt{u_2^{k,n}}\right)_{n\in\N^*}$ are bounded in $\H^1(\Omega)\hookrightarrow \L^{2^\star}(\Omega)$, with $2^\star = 2d/(d-2)>2$, and hence $(\uu_n^k)_{n\in\N^*}$ is eventually bounded in some $\L^p(\Omega)$ space with $1<p<\infty$.\vspace{2mm}\\
On the other hand, estimate \eqref{ineq:propertyA2} gives us 
\begin{align*}
\Qq^\ep(\uu_n^k)(\ww_n^k) \geq 4 \left|\nabla  \sqrt{a_{21}^\ep(u_1^{k,n})a_{12}^\ep(u_2^{k,n})}\right|^2,
\end{align*}
which together with \eqref{ineq:prepropo:exist} leads to
\begin{align*}
\left\|\nabla \sqrt{a_{21}^\ep(u_1^{k,n})a_{12}^\ep(u_2^{k,n})}\right\|_{\L^2(\Omega)}^2 \leq \frac{1}{4\tau}K_T(\mathscr{E}_\ep(\uu^0)+1).
\end{align*}
Using Poincar\'e-Wirtinger inequality, we get, for some constant $C_\Omega$ depending only on $\Omega$ 
\begin{align*}
\left\| \sqrt{a_{21}^\ep(u_1^{k,n})a_{12}^\ep(u_2^{k,n})}\right\|_{\L^2(\Omega)} &\leq C_\Omega \left\| \sqrt{a_{21}^\ep(u_1^{k,n})a_{12}^\ep(u_2^{k,n})}\right\|_{\L^1(\Omega)} + \frac{C_\Omega}{2\sqrt{\tau}}\sqrt{K_T(\mathscr{E}_\ep(\uu^0)+1)}\\
&\leq C_\Omega \sqrt{\|a_{21}^\ep(u_1^{k,n})\|_{\L^1(\Omega)}\|a_{12}^\ep(u_2^{k,n})\|_{\L^1(\Omega)}}+\frac{C_\Omega}{2\sqrt{\tau}}\sqrt{K_T(\mathscr{E}_\ep(\uu^0)+1)}\\
&\leq C_\Omega \Big[\|a_{21}^\ep(u_1^{k,n})\|_{\L^1(\Omega)}+\|a_{12}^\ep(u_2^{k,n})\|_{\L^1(\Omega)}\Big]+\frac{C_\Omega}{2\sqrt{\tau}}\sqrt{K_T(\mathscr{E}_\ep(\uu^0)+1)},
\end{align*}
and, again because point $(iv)$ of Lemma \ref{lem:ineqconv:approx}, for some constant $D$ ($\ep<1$),
\begin{align*}
a_{ji}^\ep(u_i^{k,n}) \leq D\,(2+ \psi_i^\ep(u_i^{k,n})),
\end{align*}
so we finally have
\begin{align*}
\left\| \sqrt{a_{21}^\ep(u_1^{k,n})a_{12}^\ep(u_2^{k,n})}\right\|_{\L^2(\Omega)} \leq C_\Omega\Big[ 8 D\mu(\Omega)+2D\mathscr{E}_\ep(\uu^k_n)\Big]+\frac{1}{2\sqrt{\tau}}\sqrt{K_T(\mathscr{E}_\ep(\uu^0)+1)},
\end{align*}
and hence thanks to (\ref{ineq:prepropo:exist}), the sequence
$\left(\sqrt{a_{21}^\ep(u_1^{k,n})a_{12}^\ep(u_2^{k,n})}\right)_{n\in\N^*}$ is bounded in $\H^1(\Omega)\hookrightarrow \L^{2^\star}(\Omega)$, with $2^\star=2d/(d-2)>2$. The previous continuous injection implies then that $(a_{21}^\ep(u_1^{k,n})a_{12}^\ep(u_2^{k,n}))_{n\in\N^*}$ is bounded in some $\L^p(\Omega)$ space with $1<p<\infty$ and since $a_{ji}^\ep(u_i^{k,n})=a_{ji}(u_i^{k,n})+\ep u^{k,n}_i$, we eventually get that $(u_1^{k,n}u_2^{k,n})_{n\in\N^*}$, $\left(u_1^{k,n}a_{12}(u_2^{k,n})\right)_{n\in\N^*}$ and $\left(u_2^{k,n}a_{21}(u_1^{k,n})\right)_{n\in\N^*}$ are bounded in the same $\L^p(\Omega)$. 
\vspace{2mm}\\
Summing the bounds already obtained [we recall that they hold for a given $\var$ and $\tau$], we see that (for all $k\in\{1,\dots,N\}$):
\begin{itemize}
\item[$\bullet$] $\left(\sqrt{u_1^{k,n}}\right)_{n\in\N^*}$ and  $\left(\sqrt{u_2^{k,n}}\right)_{n\in\N^*}$ are bounded in $\H^1(\Omega)$,
\item[$\bullet$] $\left(\sqrt{a_{21}^\ep(u_1^{k,n})a_{12}^\ep(u_2^{k,n})}\right)_{n\in\N^*}$ is bounded in $\H^1(\Omega)$,
\item[$\bullet$] $(u_1^{k,n})_{n\in\N^*}$, $(u_2^{k,n})_{n\in\N^*}$, $(u_1^{k,n}u_2^{k,n})_{n\in\N^*}$, $\left(u_1^{k,n}a_{12}(u_2^{k,n})\right)_{n\in\N^*}$ and $\left(u_2^{k,n}a_{21}(u_1^{k,n})\right)_{n\in\N^*}$ are bounded in some $\L^p(\Omega)$ space, with $1<p<\infty$,
\item[$\bullet$] and obviously $(\ww_n^k)_{n\in\N^*}$ is bounded in $\H^1(\Omega)$, because of estimate \eqref{ineq:prepropo:exist}.
\end{itemize}
\bigskip

Since we are only dealing with a finite number of values for $k\in\{1,\dots,N\}$ (at this point, the functions are not time-depending), we shall only detail the study of $(\uu^1_n)_{n\in\N^*}$, the other values of $k$ being similar (one only has to extract a finite number of subsequences). For every test function $\cchi=(\chi_1,\chi_2)\in\V_n^2$, the weak formulation \eqref{eq:propo:weakform} may be written ($i\neq j \in\{1,2\}$)
\begin{align*}
\frac{1}{\tau}\langle \chi_i,u^{1,n}_i\rangle_{\L^2(\Omega)}-\frac{1}{\tau}\langle \chi_i,u^{0,n}_i\rangle_{\L^2(\Omega)} &- \langle \Delta \chi_i,\Big[ a_{ii}^\ep(u^{1,n}_i)+u_i^{1,n} a_{ij}(u_j^{1,n})+ \ep u_i^{1,n}u_j^{1,n}\Big]\rangle_{\L^2(\Omega)} \\
&+\var \langle \chi_i- \Delta \chi_i,w_i^{1,n}\rangle_{\L^2(\Omega)} = \left\langle \chi_i,R_{ij}^\ep(u_i^{1,n},u_j^{1,n}) \right\rangle.
\end{align*}
First we extract (but do not change the indexes) a subsequence of $(\ww^1_n)_{n\in\N^*}$ converging in $\L^2(\Omega)$ and almost everywhere to some element $\ww^1\in\H^1(\Omega)^2$. The almost everywhere convergence is transmitted to $(\uu^1_n)_{n\in\N^*}$, since $\uu^1_n=\ffi_{\ep}^{-1}(\ww^1_n)$, the $\ffi_i^\ep$ functions being homeomorphisms.
 Because of the $\L^p(\Omega)$ ($1<p<\infty$) bound for
 $(\uu^1_n)_{n\in\N^*}$, we can extract a subsequence (of the previous subsequence) converging weakly in $\L^p(\Omega)$ to some function $\vv^1\in\L^1(\Omega)^2$. It follows by a classical argument that $\vv^1$ is almost everywhere equal to $\uu^1:=\ffi_\ep^{-1}(\ww^1)$, that is the almost everywhere limit of $(\uu^1_n)_{n\in\N^*}$. 
The same argument holds for $(u_1^{1,n}u_2^{1,n})_{n\in\N^*}$ and $\left(u_i^{1,n}a_{ij}(u_j^{1,n})\right)_{n\in\N^*}$, $i\neq j=1,2$, since we have the same $\L^p(\Omega)$ bound. We may also assume\footnote{In fact we have $\uu^{0,n} = \uu^0$ ! This line is just to justify the handling of the same term in the other time steps.} that $(\uu^{0,n})_{n\in\N^*}$ converges weakly in $\L^p(\Omega)$ to some function $\uu^0\in\L^p(\Omega)$.  
Eventually, the cutoff perturbation introduced on $a_{ii}^\ep$ and the (superlinear) reaction terms ensures the weak convergence, in $\L^p(\Omega)$, of  $\left(a_{ii}^\ep(u_i^{1,n})\right)_{n\in\N^*}$ and $(\Rr^\ep(\uu^{1,n}))_{n\in\N*}$, respectively to $a_{ii}^\ep(u_i^1)$ and $\Rr^\ep(\uu^{1})$.\vspace{2mm}\\
We now fix a test function  $\cchi$ in $\displaystyle \cup_{n\in\N^*} \V_n^2$, say $\cchi\in\V_m^2$, $m\in\N^*$. Then, for $n$ greater than $m$, since the sequence of spaces $\V_n$ is increasing, we have the previous weak formulation that passes to the limit thanks to all the 
extractions that we made,
 and get eq. (\ref{eq:weak_form}).
\bigskip

It remains to prove the bounds announced in the Proposition. 



Since we have almost everywhere convergence of $(\uu^{k}_n)_{n\in\N}$ and weak $\H^1(\Omega)$ convergence of $(\ww^{k}_n)_{n\in\N}$, Fatou's Lemma and the classical estimate for the weak limits give directly, for all $k\in\llbracket 1 , N \rrbracket $,
estimate (\ref{ineq:afterlim1}).
 
We use then \eqref{ineq:mino1} that we rewrite here:
\begin{align*}
\hspace{-1cm}\Qq^\ep(\uu^k_n)(\nabla \ww^k_n) \geq \frac{4}{(1-\alpha)^2} \left|\nabla\beta_\alpha(u_1^{k,n})\right|^2  + \frac{4}{(1-\alpha)^2} \left|\nabla \beta_\alpha(u_2^{k,n})\right|^2 + 4\ep \left|\nabla \sqrt{u_1^{k,n}}\right|^2 + 4\ep \left|\nabla \sqrt{u_2^{k,n}}\right|^2,
\end{align*}
where we recall that $\beta_\alpha(x)= x^{\frac{1-\alpha}{2}}$.
We notice that for all  $j\in\llbracket 1,k\rrbracket$, the sequence $(\beta_\alpha(\uu^j_n))_{n\in\N}$ was, precisely in view of the previous inequality, bounded in $\H^1(\Omega)$, so that we can (adding another extraction) assume that we also had the convergence  $(\beta_\alpha(\uu^k_n))_{n\in\N}\rightharpoonup  \beta_\alpha(\uu^k)$ in $\H^1(\Omega)$ (using the uniqueness of the weak limit) and we hence have estimate \eqref{ineq:afterlim2} using Fatou's Lemma. 

As for \eqref{ineq:prepropo:exist2} and \eqref{ineq:prepropo:exist3}, we notice that we had a $\L^p(\Omega)$ ($p>1$) bound for both $(\uu^{j,n})_{n\in\N^*}$ and $(\Rr^{-,\ep}(\uu^{j,n}))_{n\in\N^*}$, associated with almost everywhere convergence. This is sufficient (using Egoroff's theorem) to get the (strong) convergence of these two sequences in $\L^1(\Omega)$. 
Estimates \eqref{ineq:prepropo:exist2} and \eqref{ineq:prepropo:exist3} give hence \eqref{ineq:afterlim3}, and \eqref{ineq:afterlim4}
 after taking the limit in $n$.
\end{proof}

\section{Duality Estimate}
\label{section5}

\subsection{Notations}\label{sub51}


\begin{definition}\label{def:pro}
For a given family $h:=(h^k)_{1\leq k \leq N}$ of functions defined on $\Omega$, we denote by $\underline{h}^{\boldsymbol{\tau}}$ the step (in time) function defined on $]0,T] \times \Omega$ by
\begin{align*}
\underline{h}^{\boldsymbol{\tau}}(t,x):= \sum_{k=1}^N h^k(x) \mathbf{1}_{](k-1)\tau,k\tau]} (t).
\end{align*}
We then have by definition, for all $p,q\in[1,\infty[$,
\begin{align*}
\|\underline{h}^{\boldsymbol{\tau}}\|_{\L^q\big([0,T];\L^p(\Omega)\big)}=\left(\sum_{k=1}^N \tau \|h^{k}\|^q_{\L^p(\Omega)}\right)^{1/q},
\end{align*}
and in particular
\begin{align*}
\|\underline{h}^{\boldsymbol{\tau}}\|_{\L^p(Q_T)}=\left(\sum_{k=1}^N \tau \int_\Omega |h^{k}(x)|^p\dd x\right)^{1/p}. 
\end{align*}
\end{definition}


\subsection{Duality estimate: abstract result} \label{sub52}

This subsection is devoted to the establishment of a discretized version of the duality estimates devised for singular parabolic equations
in \cite{PiSc}, \cite{MaPi}. We start with the following Lemma:

\begin{lem}\label{propo:paragene}
Consider a smooth function $b \in\mathscr{C}^\infty(\Omega)$ such that $b \geq \gamma >0$ for some constant $\gamma$.
 For a given $\Psi\in\L^2(\Omega)$, the variational problem associated to the equation
\begin{align}\label{star}
- \Phi + b \Delta \Phi = \Psi
\end{align}
is well-posed in $\H^2_\nu(\Omega)$ (with $\L^2(\Omega)$ test functions). If $\Psi$ is assumed to be nonpositive, 
then the solution of the previous equation is nonnegative.
\end{lem}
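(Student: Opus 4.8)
The plan is to treat \eqref{star} as an elliptic problem $-\Phi + b\,\Delta\Phi = \Psi$ of ``reverse'' type, which becomes standard after dividing by $b$. First I would rewrite the equation as $\Delta\Phi - \tfrac{1}{b}\Phi = \tfrac{1}{b}\Psi$ and set up the bilinear form
\[
  a(\Phi,v) := \int_\Omega \nabla\Phi\cdot\nabla v\,\dd x + \int_\Omega \frac{1}{b}\,\Phi\, v\,\dd x,
\]
on $\H^1(\Omega)$, with right-hand side $\ell(v) := -\int_\Omega \tfrac{1}{b}\Psi\, v\,\dd x$. Since $b\in\mathscr{C}^\infty(\overline\Omega)$ and $b\ge\gamma>0$, the coefficient $1/b$ lies in $\L^\infty(\Omega)$ and is bounded below by a positive constant, so $a$ is continuous and coercive on $\H^1(\Omega)$; the functional $\ell$ is continuous on $\H^1(\Omega)$ because $\Psi\in\L^2(\Omega)\subset (\H^1(\Omega))'$. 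Lax--Milgram then yields a unique weak solution $\Phi\in\H^1(\Omega)$ satisfying $a(\Phi,v)=\ell(v)$ for all $v\in\H^1(\Omega)$, which is the weak formulation with the homogeneous Neumann boundary condition built in (the boundary term $\int_{\pa\Omega}\pa_n\Phi\, v$ having been dropped).

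Next I would upgrade regularity: the weak formulation says $\Delta\Phi = \tfrac1b(\Phi+\Psi)\in\L^2(\Omega)$ (since $\Phi\in\L^2$, $\Psi\in\L^2$, $1/b\in\L^\infty$), and with $\Omega$ of class $\mathscr C^2$, elliptic regularity for the Neumann Laplacian gives $\Phi\in\H^2(\Omega)$ together with a well-defined trace of $\nabla\Phi$ on $\pa\Omega$; testing against $v\in\mathscr C^\infty(\overline\Omega)$ and integrating by parts shows $\pa_n\Phi=0$ on $\pa\Omega$ in the trace sense, so $\Phi\in\H^2_\nu(\Omega)$. Conversely any $\H^2_\nu(\Omega)$ solution of \eqref{star} against $\L^2$ test functions satisfies the weak form, so uniqueness in $\H^1$ gives uniqueness in $\H^2_\nu(\Omega)$. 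This establishes well-posedness.

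For the sign statement, assume $\Psi\le 0$ and I would argue that $\Phi\ge 0$ by an energy/testing argument: use $v=\Phi^-:=\max(-\Phi,0)\in\H^1(\Omega)$ as a test function in $a(\Phi,v)=\ell(v)$. On the set $\{\Phi<0\}$ one has $\nabla\Phi\cdot\nabla\Phi^- = -|\nabla\Phi^-|^2$ and $\Phi\,\Phi^- = -(\Phi^-)^2$, so the left side equals $-\int_\Omega|\nabla\Phi^-|^2 - \int_\Omega\tfrac1b(\Phi^-)^2 \le 0$, while the right side is $-\int_\Omega\tfrac1b\Psi\,\Phi^- \ge 0$ because $\Psi\le0$ and $\Phi^-\ge0$ and $1/b>0$. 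Hence both sides vanish, forcing $\Phi^-=0$ a.e., i.e.\ $\Phi\ge0$. (Alternatively one invokes the weak maximum principle for $\Delta - 1/b$ directly.)

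The main obstacle here is essentially bookkeeping rather than a deep difficulty: one must be careful that ``well-posed in $\H^2_\nu(\Omega)$ with $\L^2$ test functions'' is the right functional setting, i.e.\ that the $\H^1$ weak solution produced by Lax--Milgram is genuinely $\H^2$ and carries the Neumann condition, which is exactly where the $\mathscr C^2$ smoothness of $\Omega$ and interior-plus-boundary elliptic regularity for the Neumann problem are used; and in the sign argument one must justify that $\Phi^-\in\H^1(\Omega)$ and the chain-rule identities for truncations, which is standard (Stampacchia). No estimate is claimed quantitatively, so no constants need tracking.
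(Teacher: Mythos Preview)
Your proposal is correct and follows essentially the same route as the paper: divide by $b$ to obtain a standard coercive problem $-\Delta\Phi+\tfrac1b\Phi=-\tfrac1b\Psi$, apply Lax--Milgram in $\H^1(\Omega)$, then invoke elliptic regularity for the Neumann problem to land in $\H^2_\nu(\Omega)$, and finally use the (weak) maximum principle for the sign. The only difference is that you spell out the maximum-principle step explicitly via the $\Phi^-$ test-function argument, whereas the paper simply appeals to it by name.
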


\begin{proof}
Since $b$ is bounded from below (by $\gamma$) and above, we can first (uniquely) solve the (variationnal formulation of the) problem 
\begin{align}\label{starstar}
-b^{-1}\Phi+\Delta \Phi = b^{-1}\Psi,
\end{align} in the Hilbert space
 $\H^1(\Omega)$, using Lax-Milgram Theorem.
 \par
Elliptic regularity ensures that the constructed solution lies in fact in $\H^2(\Omega)$, and even in $\H^2_\nu(\Omega)$ because of the variationnal formulation,  so that  \eqref{starstar}  is satisfied almost everywhere and we recover a solution of \eqref{star} after a multiplication by $b$. Uniqueness is straightforward. All  $\L^2(\Omega)$ test functions are obtained by density.
The last part of the Lemma is obtained by maximum principle.
\end{proof}
\vspace{2mm}

We then turn to the:

\begin{lem}\label{lem:dual_classic}
Consider a real number $r>0$ such that $1-2r\tau>0$ and two families $b:=(b^k)_{1\leq k \leq N}$, $F:=(F^k)_{1\leq k \leq N}$ of  $\mathscr{C}^\infty(\Omega)$ functions. Assume that for all $k\in\llbracket 1 ,N \rrbracket$, $b^k \geq 1$ and $F^k \leq 0$ (pointwise).
\par
Then there exists a family ${\Phi:=(\Phi^k)_{1\leq k \leq N}\in\H^2_\nu(\Omega)^N}$ of nonnegative functions such that,
 (defining $\Phi^{N+1}:=0$), 
\begin{align}\label{eq:kth}
\forall k\in\llbracket 1 ,N\rrbracket, \quad\frac{\Phi^{k+1}-\Phi^{k}}{\tau}+b^{k}\Delta\Phi^{k}=\sqrt{b^{k}}F^{k}-r\Phi^{k},
\end{align}
where eq. (\ref{eq:kth}) has to be understood weakly, against $\L^2(\Omega)$ test functions. 
\par
This family satisfies furthermore 
\begin{align}
\label{ineq:duality_bound1} \forall j\in\llbracket 1, N\rrbracket,\quad  \|\nabla \Phi^{j}\|_{\L^2(\Omega)} &\leq e^{r(\tau) T}\big\|\underline{F}^{\boldsymbol{\tau}}\big\|_{\L^2(Q_T)},\\
\label{ineq:duality_bound2} \big\|\sqrt{\underline{b}^{\boldsymbol{\tau}}}\Delta \underline{\Phi}^{\boldsymbol{\tau}}\big\|_{\L^2(Q_T)} &\leq e^{r(\tau) T}\big\|\underline{F}^{\boldsymbol{\tau}}\big\|_{\L^2(Q_T)},\\
\label{ineq:duality_bound3} \forall j\in\llbracket 1, N\rrbracket,\quad \|\Phi^j\|_{\H^1(\Omega)}& \leq e^{r(\tau) T}C_\Omega\Big[e^{r(\tau) T}+\big\|\sqrt{\underline{b}^{\boldsymbol{\tau}}}\big\|_{\L^2(Q_T)}\Big]\big\|\underline{F}^{\boldsymbol{\tau}}\big\|_{\L^2(Q_T)},
\end{align}
where the sequence of positive scalars $(r(\tau))_\tau$ goes to $r$ when $\tau\rightarrow 0$.
\end{lem}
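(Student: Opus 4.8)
The plan is to construct the family $(\Phi^k)$ by backward induction on $k$, starting from $\Phi^{N+1}:=0$, and at each step to derive the desired estimates from suitable energy identities obtained by testing \eqref{eq:kth} against well-chosen test functions. First, rewriting \eqref{eq:kth} as
\begin{align*}
-\Phi^k + \frac{\tau b^k}{1+r\tau}\Delta\Phi^k = \frac{\tau\sqrt{b^k}}{1+r\tau}F^k - \frac{1}{1+r\tau}\Phi^{k+1},
\end{align*}
we see it is exactly an equation of the type \eqref{star} with coefficient $\tilde b^k:=\tau b^k/(1+r\tau)\geq \gamma>0$ (using $b^k\geq 1$ and $\tau$ fixed) and right-hand side $\Psi^k:=\tau\sqrt{b^k}F^k/(1+r\tau) - \Phi^{k+1}/(1+r\tau)$. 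So Lemma \ref{propo:paragene} applies and gives, inductively, a unique $\Phi^k\in\H^2_\nu(\Omega)$; moreover, if we know $\Phi^{k+1}\geq 0$ and recall $F^k\leq 0$, then $\Psi^k\leq 0$, so the nonnegativity part of Lemma \ref{propo:paragene} yields $\Phi^k\geq 0$. This closes the construction and the sign property.

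For the estimates, the key is the discrete energy identity obtained by multiplying \eqref{eq:kth} by $-\Delta\Phi^k$ and integrating over $\Omega$ (legitimate since $\Phi^k\in\H^2_\nu$, so $\Delta\Phi^k\in\L^2$ is an admissible test function and $\int_\Omega \nabla\Phi^k\cdot\nabla(\Delta\Phi^k)$-type terms integrate by parts cleanly using the Neumann condition). This produces, after using $2a(a-c)\geq a^2-c^2$ on the finite-difference term $\tfrac{1}{\tau}\int_\Omega(\Phi^{k+1}-\Phi^k)(-\Delta\Phi^k)= \tfrac1\tau\int_\Omega\nabla\Phi^k\cdot(\nabla\Phi^{k+1}-\nabla\Phi^k)$, an inequality of the form
\begin{align*}
\frac{1}{2\tau}\Big(\|\nabla\Phi^{k}\|_{\L^2(\Omega)}^2 - \|\nabla\Phi^{k+1}\|_{\L^2(\Omega)}^2\Big) + \|\sqrt{b^k}\,\Delta\Phi^k\|_{\L^2(\Omega)}^2 + r\|\nabla\Phi^k\|_{\L^2(\Omega)}^2 \leq -\int_\Omega \sqrt{b^k}\,F^k\,\Delta\Phi^k\,\dd x,
\end{align*}
and then, bounding the right-hand side by Young's inequality $|{-}\!\int\sqrt{b^k}F^k\Delta\Phi^k|\leq \tfrac12\|F^k\|_{\L^2(\Omega)}^2 + \tfrac12\|\sqrt{b^k}\Delta\Phi^k\|_{\L^2(\Omega)}^2$, the $\Delta\Phi^k$ term is absorbed. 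One is left with a discrete Gr\"onwall inequality for $a_k:=\|\nabla\Phi^k\|_{\L^2(\Omega)}^2$ of the type $a_k\leq a_{k+1} + \tau\|F^k\|_{\L^2(\Omega)}^2 + \text{(something)}\cdot\tau a_k$ after handling the $r\tau$ factor via $1-2r\tau>0$; summing backward from $k$ to $N$ (with $a_{N+1}=0$) and invoking Lemma \ref{lem:grondis} (run in reverse) gives $\|\nabla\Phi^j\|_{\L^2(\Omega)}^2\leq e^{2r(\tau)T}\sum_{k}\tau\|F^k\|_{\L^2(\Omega)}^2 = e^{2r(\tau)T}\|\underline F^{\boldsymbol\tau}\|_{\L^2(Q_T)}^2$, i.e. \eqref{ineq:duality_bound1}; feeding this back into the summed energy identity and keeping the $\sum_k\tau\|\sqrt{b^k}\Delta\Phi^k\|^2$ term gives \eqref{ineq:duality_bound2}.

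Finally, \eqref{ineq:duality_bound3} follows by estimating $\Phi^j$ itself: from the equation, $\Phi^j = \tilde b^j\Delta\Phi^j + \text{(lower order)}$, so $\|\Phi^j\|_{\L^2(\Omega)}$ is controlled by $\|\sqrt{b^j}\Delta\Phi^j\|_{\L^2(\Omega)}$ times $\|\sqrt{b^j}\|_{\L^2(\Omega)}$ (Cauchy--Schwarz, since $\sqrt{b^j}\Delta\Phi^j$ is only in $\L^2$ and we multiply by $\sqrt{b^j}$) plus terms bounded by the previous estimates; combining with \eqref{ineq:duality_bound1} for the gradient via Poincar\'e--Wirtinger gives the stated $\H^1$ bound with the constant $C_\Omega$. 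The main obstacle I anticipate is the bookkeeping in the backward discrete Gr\"onwall argument — making sure the constant $r(\tau)$ comes out with the right limit $r$ as $\tau\to 0$, and correctly absorbing the $\|\sqrt{b^k}\Delta\Phi^k\|^2$ term while still retaining a usable fraction of it for \eqref{ineq:duality_bound2}; everything else is a fairly standard elliptic-regularity-plus-energy-estimate routine, leaning on Lemma \ref{propo:paragene} for well-posedness and positivity at each time slice.
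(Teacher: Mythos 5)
Your plan follows the paper's proof essentially step for step: backward induction via Lemma \ref{propo:paragene} after rewriting \eqref{eq:kth} as $-\Phi^k+\tfrac{\tau b^k}{1+r\tau}\Delta\Phi^k=\Psi^k$ with $\Psi^k\le 0$ (existence and nonnegativity), then testing with $\Delta\Phi^k$ plus Young's inequality, the telescoping bound on the cross term and a reverse discrete Gr\"onwall exploiting $1-2r\tau>0$ for \eqref{ineq:duality_bound1}--\eqref{ineq:duality_bound2}, and finally a bound on the spatial mean obtained from the integrated equation (Cauchy--Schwarz against $\sqrt{b^k}$, summed down to $\Phi^{N+1}=0$) combined with Poincar\'e--Wirtinger for \eqref{ineq:duality_bound3}, exactly as in the paper. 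One small slip: in your displayed energy inequality the term $r\|\nabla\Phi^k\|_{\L^2(\Omega)}^2$ belongs on the right-hand side (it carries the unfavorable sign, which is precisely why the Gr\"onwall step and the hypothesis $1-2r\tau>0$ are needed, as your subsequent narrative correctly assumes), so this does not affect the validity of the plan.
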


\begin{proof}
A straightforward induction using Lemma \ref{propo:paragene} gives us step by step the existence of the family $(\Phi^k)_{1\leq k \leq N}$ and the nonnegativity of its elements. Now in the $k$-th equation, $\tau \Delta \Phi^{k}\in\L^2(\Omega)$ is an admissible test function and we have hence after integration by parts:
\begin{align*}
\int_\Omega |\nabla \Phi^{k}|^2\dd x - \int_\Omega \nabla\Phi^{k+1}\cdot \nabla\Phi^{k} \dd x+\tau\int_\Omega b^{k}|\Delta \Phi^{k}|^2\dd x=\tau\int_\Omega \sqrt{b^{k}}F^{k}\Delta\Phi^{k}\dd x + r\tau\int_\Omega |\nabla \Phi^{k}|^2\dd x.
\end{align*}
Using
\begin{align*}
 \sqrt{b^{k}}F^{k}\Delta\Phi^{k}\leq \frac{|F^{k}|^2}{2}+\frac{b^{k}|\Delta\Phi^{k}|^2}{2}
\end{align*}
and
\begin{align*}
|\nabla\Phi^{k}|^2- \nabla\Phi^{k+1}\cdot\nabla\Phi^{k} \geq \frac{1}{2} \left(|\nabla\Phi^{k}|^2-|\nabla\Phi^{k+1}|^2\right),
\end{align*}
we get, for all $k\in\llbracket 1,N\rrbracket$
\begin{align*}
\int_\Omega |\nabla\Phi^{k}|^2\dd x-\int_\Omega |\nabla\Phi^{k+1}|^2\dd x+\tau\int_\Omega b^{k}|\Delta\Phi^{k}|^2\dd x\leq \tau\int_\Omega |F^{k}|^2\dd x+2r\tau\int_\Omega |\nabla \Phi^{k}|^2\dd x.
\end{align*}
Let us introduce the auxiliary sequences ($1\leq k\leq N$):
\begin{align*}
\Psi^k&:=\frac{\Phi^{k}}{(1-2r\tau)^{k/2}},\\
G^k&:=\frac{F^{k}}{(1-2r\tau)^{k/2}},
\end{align*}
we have then, dividing the previous inequality by $(1-2r\tau)^{k+1}$,
\begin{align*}
\frac{1}{1-2r\tau}\int_\Omega |\nabla\Psi^{k}|^2\dd x-\int_\Omega |\nabla\Psi^{k+1}|^2\dd x+\frac{\tau}{1-2r\tau}\int_\Omega b^{k}|\Delta\Psi^{k}|^2\dd x&\leq \frac{\tau}{1-2r\tau}\int_\Omega |G^{k}|^2\dd x\\
&+\frac{2r\tau}{1-2r\tau}\int_\Omega |\nabla \Psi^{k}|^2\dd x,
\end{align*}
hence 
\begin{align*}
\int_\Omega |\nabla\Psi^{k}|^2\dd x-\int_\Omega |\nabla\Psi^{k+1}|^2\dd x+\frac{\tau}{1-2r\tau}\int_\Omega b^{k}|\Delta\Psi^{k}|^2\dd x\leq \frac{\tau}{1-2r\tau}\int_\Omega |G^{k}|^2\dd x.
\end{align*}
Now for $j\in\llbracket 1,N\rrbracket $, summing up over $j\leq k \leq N$ and using $\Phi^{N+1}=0$, we get
\begin{align*}
\int_\Omega |\nabla\Psi^{j}|^2\dd x+\frac{\tau}{1-2r\tau}\sum_{k=j}^{N}\int_\Omega b^{k}|\Delta\Psi^{k}|^2\dd x\leq \frac{\tau}{1-2r\tau} \sum_{k=1}^{N}\int_\Omega |G^{k}|^2 \dd x, 
\end{align*}
and hence, in terms of the $\Phi^k, F^k$,
\begin{align*}
\int_\Omega |\nabla\Phi^{j}|^2\dd x+\tau\sum_{k=j}^{N}\int_\Omega b^{k}|\Delta\Phi^{k}|^2\dd x&\leq \frac{\tau}{(1-2r\tau)^{N+1}} \sum_{k=1}^{N}\int_\Omega |F^{k}|^2 \dd x\\
&\leq \exp\left\{\frac{(N+1)2r\tau}{1-2r\tau}\right\} \sum_{k=1}^{N}\int_\Omega \tau|F^{k}|^2 \dd x,
\end{align*}
which gives immediately \eqref{ineq:duality_bound1} and \eqref{ineq:duality_bound2} in the particular case $j=1$.\vspace{2mm}\\
Now for the last bound, notice that after integrating the $k$-th equation, we have 
\begin{align*}
(1-r\tau)\int_\Omega \Phi^k \dd x = \int_\Omega \Phi^{k+1} \dd x + \tau \int_\Omega \big\{b^k\Delta\Phi^k -\sqrt{b^k}F^k \big\}\, \dd x.
\end{align*}
Defining this time 
\begin{align*}
\Psi^k&:=\frac{\Phi^{k}}{(1-r\tau)^{k}},\\
G^k&:=\frac{F^{k}}{(1-r\tau)^{k}},
\end{align*}
we get, dividing the previous equality by $(1-r\tau)^{k+1}$,
\begin{align*}
\int_\Omega \Psi^k \dd x  = \int_\Omega \Psi^{k+1} \dd x + \frac{\tau}{1-r\tau} \int_\Omega \big\{b^k\Delta\Psi^k -\sqrt{b^k}G^k \big\} \dd x,
\end{align*}
which, after summing up all over $j\leq k\leq N$, leads to 
\begin{align*}
\int_\Omega \Phi^j \leq\int_\Omega \Psi^j \dd x &= \frac{1}{1-r\tau} \sum_{k=j}^N \tau \int_\Omega \big\{b^k\Delta\Psi^k -\sqrt{b^k}G^k \big\} \dd x\\
&\leq \frac{1}{(1-r\tau)^{N+1}}\sum_{k=j}^N \tau \int_\Omega \big\{b^k|\Delta\Phi^k| + \sqrt{b^k}|F^k| \big\} \dd x,
\end{align*}
and we have hence, using \eqref{ineq:duality_bound2},
\begin{align}
\label{ineq:dualnorm1}\left|\int_\Omega \Phi^j \dd x\right|\leq \Big[\big\|\sqrt{\underline{b}^{\boldsymbol{\tau}}}\big\|_{\L^2(Q_T)}+e^{r(\tau) T}\Big]\big\|\underline{F}^{\boldsymbol{\tau}}\big\|_{\L^2(Q_T)}e^{r(\tau) T},
\end{align}
so that \eqref{ineq:duality_bound3} follows from \eqref{ineq:duality_bound1} and Poincar\'e-Wirtinger inequality.\end{proof}
\vspace{2mm}

If we want to use functions of the previous family $\Phi$ as test functions in the weak formulation~\eqref{eq:weak_form}, we have to show that they actually belong to $\textnormal{W}^{2,p'}(\Omega)$. This can be done using the following lemma
\begin{lem}
Let $r$ be a strictly positive real number. If $w\in \H^2_\nu(\Omega)$ satisfies $w\geq 0$ and $-\Delta w\leq f+rw$ almost everywhere, for some $f$ in $\L^q(\Omega)$, $q>\max(d/2,2)$, then 
\begin{align*}
\|w\|_{\L^{\infty}(\Omega)} \leq C_{\Omega,q,r}\left(\|f\|_{\L^q(\Omega)}+ \|w\|_{\L^1(\Omega)}\right).
\end{align*}
\end{lem}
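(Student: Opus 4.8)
The statement is a standard elliptic $L^\infty$-bound obtained by a De Giorgi / Stampacchia truncation argument, so the plan is to test the differential inequality $-\Delta w \le f + rw$ against suitable powers or truncations of $w$ and bootstrap. First I would note that since $w\in \H^2_\nu(\Omega)$ and $w\ge 0$, the function $w$ is an admissible test function for its own equation (and so are all truncations $(w-\kappa)_+$ for $\kappa\ge 0$, which still lie in $\H^1(\Omega)$ and vanish on the appropriate part once we use the Neumann structure). Integrating $-\Delta w \le f+rw$ against $w$ and using the Neumann boundary condition (no boundary term) gives $\|\nabla w\|_{\L^2(\Omega)}^2 \le \int_\Omega (f+rw)\,w$, which by Sobolev embedding and Young's inequality already yields an $\L^{2^\star}$-bound on $w$ in terms of $\|f\|_{\L^q(\Omega)}$ and $\|w\|_{\L^2(\Omega)}$ — and the latter can be controlled by $\|w\|_{\L^1(\Omega)}$ and a small fraction of the higher norm by interpolation.

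The main step is then the Moser/Stampacchia iteration. For $\kappa\ge 0$ set $A_\kappa := \{w>\kappa\}$ and test the inequality against $(w-\kappa)_+$, or more efficiently against $(w-\kappa)_+^{2\beta-1}$ for increasing exponents $\beta$; integration by parts (again no boundary contribution, thanks to the $\nu$ subscript) produces
\begin{align*}
\int_{A_\kappa} |\nabla (w-\kappa)_+^{\beta}|^2 \le C\beta^2 \int_{A_\kappa} (f + rw)(w-\kappa)_+^{2\beta-1}.
\end{align*}
Combining with the Sobolev inequality in the form $\|(w-\kappa)_+^\beta\|_{\L^{2^\star}}^2 \le C(\|\nabla(w-\kappa)_+^\beta\|_{\L^2}^2 + \|(w-\kappa)_+^\beta\|_{\L^2}^2)$, using Hölder with the exponent $q>\max(d/2,2)$ on the term containing $f$, and estimating $rw \le r(w-\kappa)_+ + r\kappa$ on $A_\kappa$, one obtains a reverse-Hölder / gain-of-integrability inequality relating the $\L^{\beta\cdot 2^\star}$ norm of $(w-\kappa)_+$ to its $\L^{\beta\cdot 2}$ norm over $A_\kappa$. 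Iterating this over a geometric sequence of exponents $\beta_n$ and, in the classical Stampacchia fashion, a sequence of levels $\kappa_n \nearrow \kappa_\infty$, one gets that the super-level sets shrink to zero, i.e. $\|w\|_{\L^\infty(\Omega)} \le \kappa_\infty$, with $\kappa_\infty$ bounded by $C_{\Omega,q,r}(\|f\|_{\L^q(\Omega)} + \|w\|_{\L^1(\Omega)})$ after absorbing the intermediate norms by interpolation between $\L^1$ and $\L^\infty$ (with a small exponent in front of the $\L^\infty$ norm so it can be absorbed on the left).

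The part that requires a little care — and which I expect to be the main obstacle in writing a clean proof — is the passage from the natural $\L^2$-type constant that appears (which a priori involves $\|w\|_{\L^2(\Omega)}$) down to the stated dependence on $\|w\|_{\L^1(\Omega)}$ only. This is handled by the usual interpolation trick: $\|w\|_{\L^2(\Omega)} \le \|w\|_{\L^\infty(\Omega)}^{1/2}\|w\|_{\L^1(\Omega)}^{1/2}$, so any term $C\|w\|_{\L^2(\Omega)}$ appearing in an upper bound for $\|w\|_{\L^\infty(\Omega)}$ can be split via Young's inequality into $\tfrac12\|w\|_{\L^\infty(\Omega)} + C'\|w\|_{\L^1(\Omega)}$ and the first summand absorbed on the left-hand side. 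One must also check that the constants produced by the iteration depend only on $\Omega$, $q$ and $r$ (not on $w$ or $f$), which follows because the only structural inputs are the Sobolev constant of $\Omega$, the exponent $q$, and the coefficient $r$; the hypothesis $q>\max(d/2,2)$ is exactly what makes the Hölder exponent conjugate to $q$ small enough for the iteration exponent to exceed $1$ and for the iteration to converge. With these points in place the estimate follows.
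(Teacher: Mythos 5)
Your plan is correct, but it follows a genuinely different route from the paper. You run a De Giorgi--Stampacchia/Moser truncation--iteration directly on the differential inequality: testing $-\Delta w\leq f+rw$ against powers of truncations $(w-\kappa)_+$, using the full Sobolev inequality $\|v\|_{\L^{2^\star}}\leq C(\|\nabla v\|_{\L^2}+\|v\|_{\L^2})$ (correctly, since with Neumann data the truncations do not vanish on $\partial\Omega$), and iterating the resulting reverse-H\"older inequality to reach $\L^\infty$. The paper instead exploits the linear structure: it invokes the preceding lemma to construct $g\in\H^2_\nu(\Omega)$ solving $g-\Delta g=(r+1)w+f$, observes that $(w-g)-\Delta(w-g)\leq 0$ so the weak maximum principle gives $0\leq w\leq g$, and then bootstraps the integrability of $w$ through the chain of Sobolev embeddings $\W^{2,p}(\Omega)\hookrightarrow\L^{p^\bullet}(\Omega)$ and $\L^p$-elliptic regularity for $\Id-\Delta$, until $w\in\L^q(\Omega)$ and $\W^{2,q}(\Omega)\hookrightarrow\L^\infty(\Omega)$ (here $q>d/2$ enters, while $q>2$ is what lets the bootstrap start from the known $\L^2$ integrability of $w$). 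Both proofs finish identically, by interpolating $\|w\|_{\L^q}\leq\|w\|_{\L^1}^{1/q}\|w\|_{\L^\infty}^{1/q'}$ (you use the $\L^2$ version) and absorbing the $\L^\infty$ part via Young's inequality --- the absorption being legitimate because the preceding step already guarantees $\|w\|_{\L^\infty}<\infty$. What your approach buys is self-containedness: it needs only the Sobolev inequality, no $\W^{2,p}$ elliptic regularity or comparison function, and in principle works for $q>d/2$ with any $\L^p$ norm, $p\geq1$, on the right-hand side; the cost is the heavier bookkeeping of the double iteration in levels and exponents. The paper's comparison-plus-regularity argument is shorter to write rigorously and reuses machinery (the auxiliary Neumann problem and the maximum principle) already set up for the duality lemmas.
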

\begin{proof}
$C_{\Omega,q}$ and $C_q$ denotes constants that may vary from line to line. Lemma \ref{propo:paragene} gives us the existence of $g\in\H^2_\nu(\Omega)$, unique solution of 
\begin{align*}
g-\Delta g = (r+1)w + f \in\L^{\min(2^{\bullet},q)}(\Omega),
\end{align*} 
where $p^\bullet$ is defined by the Sobolev embedding $\W^{2,p}(\Omega)\hookrightarrow\L^{p^{\bullet}}(\Omega)$. If $2^\bullet < q$, then the elliptic regularity of $(\Id-\Delta)$ 
 and a Sobolev embedding (cf. \cite{gilbarg}) ensures
\begin{align*}
\|g\|_{\L^{{2^{\bullet}}^\bullet}}(\Omega) \leq C_\Omega \|g\|_{\W^{2,2^\bullet}(\Omega)} \leq C_\Omega\|(r+1)w + f \|_{\L^{2^\bullet}(\Omega)}.
\end{align*}
But we also have  
\begin{align*}
w-\Delta w\leq (r+1)w+f,
\end{align*}
that we can write
\begin{align*}
(w-g) -\Delta(w-g) \leq 0,
\end{align*}
so that by the weak maximum principle in $\H^1_\nu(\Omega)$, we have 
\begin{align*}
0 \leq w \leq g \in \L^{{2^\bullet}^\bullet}(\Omega).
\end{align*}
 The previous argument can be iterated in order to finally get $w\in\L^q(\Omega)$,
 and thus using the elliptic regularity mentioned before,
\begin{align*}
\|g\|_{\W^{2,q}(\Omega)} \leq C_\Omega \|(r+1)w + f\|_{\L^q(\Omega)}.
\end{align*}
Since $q>d/2$, we get the $\L^\infty$ estimate
\begin{align*}
\|g\|_{\L^\infty(\Omega)} \leq C_{\Omega,q} \|(r+1)w + f\|_{\L^q(\Omega)},
\end{align*}
that is again transmitted to $w$, thanks to the weak maximum principle :
\begin{align*}
\|w\|_{\L^\infty(\Omega)} \leq C_{\Omega,q}\|f\|_{\L^q(\Omega)}+C_{\Omega,q}(r+1)\|w\|_{\L^q(\Omega)}.
\end{align*}
Since
\begin{align*}
\|w\|_{\L^q(\Omega)}\leq \|w\|_{\L^1(\Omega)}^{1/q}\|w\|_{\L^\infty(\Omega)}^{1/q'},
\end{align*}
we get by Young's inequality
\begin{align*}
C_{\Omega,q}(r+1)\|w\|_{\L^q(\Omega)}\leq C_{q,r} \|w\|_{\L^1(\Omega)}+\frac{1}{2}\|w\|_{\L^\infty(\Omega)},
\end{align*}
and we are eventually able to conclude that
\begin{align*}
\|w\|_{\L^{\infty}(\Omega)} \leq C_{\Omega,q,r}\left(\|f\|_{\L^q(\Omega)}+ \|w\|_{\L^1(\Omega)}\right).
\end{align*}
\end{proof}
\vspace{2mm}

We  now can prove the following:
\begin{lem}\label{lem:dual_classic2}
Under the assumptions of Lemma \ref{lem:dual_classic}, the sequence of functions $\Phi$ satisfies in fact, for all $k\in\llbracket 1,N\rrbracket$
\begin{align*}
\| \Phi^k \|_{\L^\infty(\Omega)} + \| \Delta \Phi^k \|_{\L^\infty(\Omega)} &\leq C_{\tau,\Omega} \Big[1+\big\|\sqrt{\underline{b}^{\boldsymbol{\tau}}}\big\|_{\L^2(Q_T)}\Big]\big\|\underline{F}^{\boldsymbol{\tau}}\big\|_{\L^\infty(Q_T)},
\end{align*}
the constant $C_{\tau,\Omega}$ depending only on $\tau$ and $\Omega$, but (severely) blowing up as $\tau\rightarrow 0$. In particular we get that $\Phi^k\in\W^{2,q}(\Omega)$ for all $q\in[1,\infty[$.
\end{lem}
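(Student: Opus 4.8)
The plan is to read the $k$-th relation \eqref{eq:kth} as a scalar elliptic inequality for the single function $\Phi^k$, feed it into the previous Lemma, and then climb the recursion downwards from $k=N$ to $k=1$. First I would rewrite \eqref{eq:kth} as
\[
-\,b^k\Delta\Phi^k+\Big(\tfrac1\tau-r\Big)\Phi^k=\tfrac1\tau\,\Phi^{k+1}-\sqrt{b^k}\,F^k,
\]
where $\tfrac1\tau-r>0$ since $1-2r\tau>0$. Dividing by $b^k\ge1$ and using $\Phi^k,\Phi^{k+1}\ge0$, $F^k\le0$, together with $1/b^k\le1\le\sqrt{b^k}$, every unfavourable term is killed and one is left with the pointwise a.e. inequality
\[
-\Delta\Phi^k\ \le\ \tfrac1\tau\,\Phi^{k+1}+|F^k|\ \le\ \big(\tfrac1\tau\,\Phi^{k+1}+|F^k|\big)+\Phi^k,
\]
that is $-\Delta w\le f+w$ with $w=\Phi^k\ge0$ and $f:=\tfrac1\tau\Phi^{k+1}+|F^k|\ge0$. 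Fixing once and for all $q>\max(d/2,2)$, as soon as $f\in\L^q(\Omega)$ the previous Lemma (applied with $r=1$) gives $\|\Phi^k\|_{\L^\infty(\Omega)}\le C_{\Omega,q}\big(\|f\|_{\L^q(\Omega)}+\|\Phi^k\|_{\L^1(\Omega)}\big)$.

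Then I would establish the $\L^\infty$ bound on the $\Phi^k$ by a finite downward induction on $k$. For $k=N$ one has $\Phi^{N+1}=0$, so $f=|F^N|\in\L^\infty(\Omega)\subset\L^q(\Omega)$ and the previous Lemma applies directly: the term $\|F^N\|_{\L^q(\Omega)}\le|\Omega|^{1/q}\,\|\underline{F}^{\boldsymbol{\tau}}\|_{\L^\infty(Q_T)}$ is harmless, while $\|\Phi^N\|_{\L^1(\Omega)}=\int_\Omega\Phi^N$ is controlled by \eqref{ineq:dualnorm1} combined with the crude bound $\|\underline{F}^{\boldsymbol{\tau}}\|_{\L^2(Q_T)}\le(T|\Omega|)^{1/2}\|\underline{F}^{\boldsymbol{\tau}}\|_{\L^\infty(Q_T)}$; this already yields the announced estimate for $k=N$. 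For the inductive step, assuming $\Phi^{k+1}\in\L^\infty(\Omega)$ one has $f\in\L^\infty\subset\L^q$ with $\|f\|_{\L^q(\Omega)}\le\tfrac1\tau|\Omega|^{1/q}\|\Phi^{k+1}\|_{\L^\infty(\Omega)}+|\Omega|^{1/q}\|\underline{F}^{\boldsymbol{\tau}}\|_{\L^\infty(Q_T)}$, and a further use of the previous Lemma (plus \eqref{ineq:dualnorm1} for $\|\Phi^k\|_{\L^1(\Omega)}$) gives, for the constants $c_k$ defined by $\|\Phi^k\|_{\L^\infty(\Omega)}\le c_k\big[1+\|\sqrt{\underline{b}^{\boldsymbol{\tau}}}\|_{\L^2(Q_T)}\big]\|\underline{F}^{\boldsymbol{\tau}}\|_{\L^\infty(Q_T)}$, the one-step recursion $c_k\le (C_{\Omega,q}|\Omega|^{1/q}/\tau)\,c_{k+1}+C_{\tau,\Omega}$ with $c_{N+1}=0$. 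Summing this geometric recursion over the $N=T/\tau$ indices produces $c_1$ of the order of $(C_{\Omega,q}|\Omega|^{1/q}/\tau)^{T/\tau}$: finite for fixed $\tau$, but severely blowing up as $\tau\to0$, exactly as stated, and of the required form.

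For $\|\Delta\Phi^k\|_{\L^\infty(\Omega)}$ I would simply solve \eqref{eq:kth} for $\Delta\Phi^k$; using $b^k\ge1$ this gives the pointwise estimate $|\Delta\Phi^k|\le|F^k|+r\,\Phi^k+\tfrac1\tau\big(\Phi^{k+1}+\Phi^k\big)$, whence $\|\Delta\Phi^k\|_{\L^\infty(\Omega)}$ is bounded by the $\L^\infty$ norms of $F^k$, $\Phi^k$ and $\Phi^{k+1}$, already controlled. Since then $\Delta\Phi^k\in\L^\infty(\Omega)\subset\L^q(\Omega)$ for every $q<\infty$ and $\Phi^k\in\H^2_\nu(\Omega)$, the $\W^{2,q}$ elliptic regularity of the Neumann Laplacian gives $\Phi^k\in\W^{2,q}(\Omega)$ for all $q\in[1,\infty[$, which is the last assertion.

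The only genuinely delicate point is the bookkeeping rather than the analysis: one has to arrange the signs so that the nonnegativity hypotheses of the previous Lemma ($w\ge0$ and a nonnegative forcing term, with $\Phi^k$ itself absorbed into the $+w$ slack) are literally met, and to track the accumulated constant, checking both that it depends on nothing but $\tau$ and $\Omega$ (beyond the fixed data $d$, $q$, $r$, $T$) and that it really degenerates as $\tau\to0$ because the induction runs over $\sim T/\tau$ steps, each multiplying the constant by $\sim1/\tau$. The nontrivial $\H^2\to\L^\infty$ bootstrap is precisely the content of the previous Lemma and is used here as a black box.
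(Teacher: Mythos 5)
Your proof is correct and follows essentially the same route as the paper: a downward induction from $k=N$ in which each equation \eqref{eq:kth} is read (using $b^k\ge 1$, $F^k\le 0$, $\Phi^j\ge 0$) as a scalar inequality $-\Delta\Phi^k\le f+c\,\Phi^k$, to which the preceding $\H^2\to\L^\infty$ lemma is applied with the $\L^1$ norm controlled by \eqref{ineq:dualnorm1}, followed by solving \eqref{eq:kth} pointwise for $\Delta\Phi^k$ and invoking elliptic regularity for the $\W^{2,q}$ conclusion. The only (harmless) cosmetic differences are that you absorb the zeroth-order term so as to use the lemma with $r=1$ and you track the geometric $(C/\tau)^{T/\tau}$ growth of the constant explicitly, which the paper leaves implicit.
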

\begin{proof}
Since $\Phi^N\geq 0$, $\Phi^{N+1}=0$, $F^N \leq 0$ and $b^N \geq 1$, the $N$-th equation of \eqref{eq:kth} gives us
\begin{align*}
-\Delta \Phi^N \leq -F^{N}+r\Phi^N,
\end{align*}
and since $F^N \in \L^\infty(\Omega)\hookrightarrow \L^s(\Omega)$ for all $s\geq 1$, the previous Lemma applies:
\begin{align*}
  \|\Phi^N\|_{\L^\infty(\Omega)} \leq C_{\Omega,r} \Big[\big\|\underline{F}^{\boldsymbol{\tau}}\big\|_{\L^\infty(Q_T)} +   \|\Phi^N\|_{\L^1(\Omega)}\Big],
\end{align*}
and we use then \eqref{ineq:dualnorm1} to get 
\begin{align*}
  \|\Phi^N\|_{\L^\infty(\Omega)} \leq C_{\tau,\Omega, T} \Big[1+\big\|\sqrt{\underline{b}^{\boldsymbol{\tau}}}\big\|_{\L^2(Q_T)}\Big]\big\|\underline{F}^{\boldsymbol{\tau}}\big\|_{\L^\infty(Q_T)}.
\end{align*}
For all $k\in \llbracket 1,N\rrbracket$, using the $k$-th equation of \eqref{eq:kth},
 we have in the same way
\begin{align*}
-\Delta \Phi^k \leq \Phi^{k+1}-F^{k}+r\Phi^k,
\end{align*}
and hence a descending induction gives eventually ($C_{\tau,\Omega}$ varies from line to line)
\begin{align*}
\| \Phi^k \|_{\L^\infty(\Omega)} &\leq C_{\tau,\Omega} \Big[1+\big\|\sqrt{\underline{b}^{\boldsymbol{\tau}}}\big\|_{\L^2(Q_T)}\Big]\big\|\underline{F}^{\boldsymbol{\tau}}\big\|_{\L^\infty(Q_T)}.
\end{align*}
To handle the laplacian terms, we write for all $k\in\llbracket 1,N\rrbracket$, using again \eqref{eq:kth},
\begin{align*}
-\Delta \Phi^{k}=\frac{\Phi^{k+1}-\Phi^k}{b^{k}\tau}-\frac{F^{k}}{\sqrt{b^{k}}}  + r\frac{\Phi^k}{b^k},
\end{align*}
which directly gives, using the previous estimate,
\begin{align*}
\| \Delta \Phi^k \|_{\L^\infty(\Omega)} &\leq C_{\tau,\Omega} \Big[1+\big\|\sqrt{\underline{b}^{\boldsymbol{\tau}}}\big\|_{\L^2(Q_T)}\Big]\big\|\underline{F}^{\boldsymbol{\tau}}\big\|_{\L^\infty(Q_T)}.
\end{align*}
In particular, the norms $\| \Phi^k-\Delta \Phi^k \|_{\L^q(\Omega)}$ are all finite for $q\in]1,\infty[$. Thereby, using again the aforementioned (see \cite{gilbarg}) result on elliptic regularity, we get the finiteness of all the norms $\|\Phi^k\|_{\W^{2,q}(\Omega)}$ for all $q<\infty$.
\end{proof}
 
\subsection{Duality estimate: application to the system} \label{sub53}

\begin{Propo}\label{propo:estimdualite}
Under the assumptions of Theorem \ref{theo:theo} and using the Definitions \ref{defipseps}, \ref{timestep} and \ref{def:pro}, any weak solution of eq. (\ref{eq:weak_form}) satisfies the following estimate:
\begin{small}
\begin{align}\label{ineq:dual} 
\hspace{-1.6cm}\left\|\underline{u}_{\hspace{1pt} 1}^{\boldsymbol{\tau}}\sqrt{d_{11}^\ep\big({\underline{u}_{\hspace{1pt} 1}^{\boldsymbol{\tau}}}\big)+a_{12}^\ep\big({\underline{u}_{\hspace{1pt} 2}^{\boldsymbol{\tau}}}\big)}\right\|_{\L^2(Q_T)}\leq D,
\end{align}
\end{small}
where $D$ is a constant depending only on $T,\Omega$ and $\uu^0$.
 The same estimate holds when the subscripts $1$ and $2$ are exchanged.
\end{Propo}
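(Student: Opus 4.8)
The plan is to run the discrete duality Lemmas~\ref{lem:dual_classic}--\ref{lem:dual_classic2} on the first equation of \eqref{eq:weak_form}, and to close the resulting estimate by a bootstrap. Fix $i=1$ (the case $i=2$ follows by exchanging indices). Rewriting the diffusion term as $a_{11}^\ep(u_1^k)+u_1^ka_{12}(u_2^k)+\ep u_1^ku_2^k=u_1^k\,b^k$ with
\[
b^k:=d_{11}^\ep(u_1^k)+a_{12}^\ep(u_2^k)\ \geq\ d_{11}^\ep(u_1^k)\ \geq\ 1
\]
(by \textbf{H3} and $\gamma_\ep(1)=1$), \eqref{eq:weak_form} reads, for $\chi\in\mathscr C^\infty_\nu(\overline\Omega)$ and $u_1^0:=u_1^{in}$,
\[
\tfrac1\tau\langle\chi,u_1^k-u_1^{k-1}\rangle_{\L^2(\Omega)}-\langle\Delta\chi,u_1^kb^k\rangle_{\L^2(\Omega)}+\ep\langle\chi-\Delta\chi,w_1^k\rangle_{\L^2(\Omega)}=\langle\chi,R_{12}^\ep(u_1^k,u_2^k)\rangle_{\L^2(\Omega)}.
\]
Since $b^k$ is only measurable (but, for $\ep,\tau$ fixed, satisfies $\|b^k\|_{\L^1(\Omega)}\leq\ep^{-1}|\Omega|+C(1+\mathscr E_\ep(\uu^k))<\infty$ by Lemma~\ref{lem:ineqconv:approx}$(iv)$), I would first mollify it into smooth $b^k_m\geq1$ with $b^k_m\to b^k$ in $\L^1(\Omega)$ and a.e.\ and $\|b^k_m\|_{\L^1(\Omega)}\lesssim\|b^k\|_{\L^1(\Omega)}$. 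For any family $(F^k)_k$ of smooth nonpositive functions, Lemmas~\ref{lem:dual_classic}--\ref{lem:dual_classic2} (applied with $b_m$, $r\geq\max(r_1,r_2)$, $\tau$ small) then provide nonnegative $\Phi^k=\Phi^k_m\in\W^{2,q}(\Omega)$ for all $q<\infty$, hence admissible as test functions in \eqref{eq:weak_form} after a density argument, satisfying \eqref{ineq:duality_bound1}--\eqref{ineq:duality_bound3} and \eqref{ineq:dualnorm1}.

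Next I would take $\chi=\Phi^k$, multiply by $\tau$, and sum over $k\in\{1,\dots,N\}$. Splitting $u_1^kb^k=u_1^kb^k_m+u_1^k(b^k-b^k_m)$ and substituting $b^k_m\Delta\Phi^k$ from \eqref{eq:kth}, the main part $-\tau\langle\Delta\Phi^k,u_1^kb^k_m\rangle$ becomes $-\tau\langle u_1^k,\sqrt{b^k_m}F^k-r\Phi^k\rangle-\langle u_1^k,\Phi^{k+1}-\Phi^k\rangle$, and after summation the last two brackets telescope (with $\Phi^{N+1}=0$) to $-\langle u_1^{in},\Phi^1\rangle$. Letting $m\to\infty$ (the error term $\tau\langle\Delta\Phi^k_m,u_1^k(b^k-b^k_m)\rangle$ vanishing by the $\L^q$ bound on $\Delta\Phi^k_m$ and the strong convergence $b^k_m\to b^k$) one reaches
\begin{align*}
\sum_k\tau\!\int_\Omega u_1^k\sqrt{b^k}\,|F^k|\,\dd x
&=\int_\Omega u_1^{in}\,\Phi^1\,\dd x+(r_1-r)\sum_k\tau\!\int_\Omega u_1^k\Phi^k\,\dd x-\sum_k\tau\!\int_\Omega R_{12}^{-,\ep}(u_1^k,u_2^k)\,\Phi^k\,\dd x\\
&\quad-\ep\sum_k\tau\!\int_\Omega w_1^k\Phi^k\,\dd x+\ep\sum_k\tau\!\int_\Omega w_1^k\Delta\Phi^k\,\dd x.
\end{align*}
Because $u_1^k,\Phi^k,R_{12}^{-,\ep}\geq0$ and $r\geq r_1$, the second and third terms on the right are $\leq0$, so the left-hand side is bounded by $\int_\Omega u_1^{in}\Phi^1+\ep\big|\sum_k\tau\int w_1^k\Phi^k\big|+\ep\big|\sum_k\tau\int w_1^k\Delta\Phi^k\big|$. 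Estimating these by \eqref{ineq:duality_bound1}--\eqref{ineq:dualnorm1}, Cauchy--Schwarz in $x$ and in $k$, the entropy bound \eqref{ineq:afterlim1} (which also gives $\ep\sum_k\tau\|\ww^k\|_{\H^1(\Omega)}^2\leq K_T(\mathscr E_\ep(\uu^0)+1)$ and $\mathscr E_\ep(\uu^0)\lesssim1+\|\uu^0\|_{\L^2(\Omega)}^2$), and $\ep\leq1$, every term is $\leq C_0\,(1+Y)\,\|\underline{F}^{\boldsymbol{\tau}}\|_{\L^2(Q_T)}$ with $Y:=\|\sqrt{\underline{b}^{\boldsymbol{\tau}}}\|_{\L^2(Q_T)}$ and $C_0=C_0(T,\Omega,\uu^0)$. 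Taking the supremum over such $F$ with $\|\underline{F}^{\boldsymbol{\tau}}\|_{\L^2(Q_T)}\leq1$ and using $u_1^k\sqrt{b^k}\geq0$ yields
\[
X:=\big\|\underline{u}_{1}^{\boldsymbol{\tau}}\sqrt{\underline{b}^{\boldsymbol{\tau}}}\,\big\|_{\L^2(Q_T)}\ \leq\ C_0(1+Y),
\]
and $X<\infty$ since $Y<\infty$ for $\ep,\tau$ fixed.

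Finally I would control $Y$ by $X$. For any threshold $M\geq1$, monotonicity of $d_{11}^\ep$ and $d_{11}^\ep\leq d_{11}$ give $d_{11}^\ep(u_1^k)\leq d_{11}(M)$ on $\{u_1^k\leq M\}$ and $d_{11}^\ep(u_1^k)\leq M^{-1}u_1^kd_{11}^\ep(u_1^k)\leq M^{-1}u_1^k\sqrt{b^k}\sqrt{d_{11}^\ep(u_1^k)}$ on $\{u_1^k>M\}$; hence $\int_\Omega d_{11}^\ep(u_1^k)\leq d_{11}(M)|\Omega|+M^{-1}\|u_1^k\sqrt{b^k}\|_{\L^2(\Omega)}(\int_\Omega d_{11}^\ep(u_1^k))^{1/2}$, and by Young $\int_\Omega d_{11}^\ep(u_1^k)\leq2d_{11}(M)|\Omega|+M^{-2}\|u_1^k\sqrt{b^k}\|_{\L^2(\Omega)}^2$. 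Together with $\int_\Omega a_{12}^\ep(u_2^k)\lesssim1+\mathscr E_\ep(\uu^k)$ and \eqref{ineq:afterlim1}, this gives $Y^2\leq C_M+M^{-2}X^2$ with $C_M$ depending only on $M,T,\Omega,\uu^0$ and $d_{11}$. Inserting into $X\leq C_0(1+Y)\leq C_0(1+\sqrt{C_M}+M^{-1}X)$ and choosing $M=2C_0$ gives $X\leq2C_0(1+\sqrt{C_{2C_0}})=:D$, depending only on $T,\Omega,\uu^0$, which is \eqref{ineq:dual}. The main obstacle is exactly this bootstrap: one must check that $C_0$ is genuinely independent of $\ep,\tau$ (so $M$ can be fixed before the absorption $X\leq a+\tfrac12X$) and that $X$ is a priori finite; the secondary difficulty is the mollification/density step making the $\Phi^k$ admissible in \eqref{eq:weak_form} and killing the $b^k-b^k_m$ term in the limit.
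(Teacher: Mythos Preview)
Your approach is correct and essentially identical to the paper's: build the backward dual sequence $(\Phi^k_m)_k$ from Lemmas~\ref{lem:dual_classic}--\ref{lem:dual_classic2} with a smooth approximation $b^k_m$ of $d_{11}^\ep(u_1^k)+a_{12}^\ep(u_2^k)$, test \eqref{eq:weak_form} against $\Phi^k_m$, let $m\to\infty$, and then close by splitting $d_{11}^\ep$ at a threshold $M$ while controlling $a_{12}^\ep$ through the entropy bound. The only refinement the paper makes explicit is that $b^k_m$ must approximate $b^k$ in the \emph{weighted} space $\L^1\big(\Omega,(1+u_1^k)\,\dd x\big)$ (this is its Lemma~\ref{lem:approxthomas}), so that $u_1^k(b^k-b^k_m)\to0$ in $\L^1(\Omega)$ --- plain $\L^1$-convergence of $b^k_m$ is not sufficient since $u_1^k\notin\L^\infty(\Omega)$ in general, which is exactly the ``secondary difficulty'' you already flag.
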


\begin{proof}
We shall  denote by $\underline{u}_1^{\boldsymbol{\tau}}$ and ${\underline{u}_{\hspace{1pt} 2}^{\boldsymbol{\tau}}}$ the step (in time) functions (see Definition \ref{def:pro}) associated to the two families $(u_1^k)_{1\leq k \leq N}$ and $(u_2^k)_{1\leq k \leq N}$ (excluding hence $u_1^0$ and $u_2^0$). Since the original system is symmetric in $u_1$ and $u_2$, let us focus on $u_1$ for now. As in Lemma \ref{lem:dual_classic}, we
consider two families $b:=(b^k)_{1\leq k \leq N}$ and $F:=(F^k)_{1\leq k\leq N}$ of $\mathscr{C}^\infty(\Omega)$ functions such that
$b^k \ge 1$ and $F^k \le 0$, and the associated sequence $\Phi:=(\Phi^k)_{1\leq k \leq N}$, with $r:=r_1$. As shown in 
Lemma \ref{lem:dual_classic2}, the functions $\Phi^k$ belong to $\W^{2,p'}(\Omega)$ (for all $p>1$) 
and are hence admissible in the weak formulation \eqref{eq:weak_form}. We may therefore
 write, for all $k\in\llbracket 1, N\rrbracket$, taking $\Phi^{k}$ for test function in the weak formulation on $u_1^k$:
\begin{align*}
\frac{1}{\tau}\langle \Phi^k,u^{k}_1\rangle_{\L^2(\Omega)}-\frac{1}{\tau}\langle \Phi^k,u^{k-1}_1\rangle_{\L^2(\Omega)} &- \left\langle \Delta \Phi^k,\Big[ a_{11}^\ep(u^k_1)+u_1^k a_{12}^\ep (u_2^k) \Big]\right\rangle_{\L^2(\Omega)} \\
 &+\ep \langle \Phi^k- \Delta \Phi^k,w_1^{k}\rangle_{\L^2(\Omega)} = \left\langle \Phi^k,R_{12}^\ep(u_1^{k},u_2^{k}) \right\rangle.
\end{align*}
Thanks to the regularity of $\Phi^k$, one can take $u^k_1$ as a test function in the $k$-th equation \eqref{eq:kth}, so that 
\begin{align*}
\frac{1}{\tau}\langle \Phi^{k+1},u^{k}_1\rangle_{\L^2(\Omega)}-\frac{1}{\tau}\langle \Phi^k,u^{k}_1\rangle_{\L^2(\Omega)} + \langle \Delta \Phi^k,b^k u_1^k\rangle_{\L^2(\Omega)} = \langle \sqrt{b^k}F^k, u^k_1 \rangle_{\L^2(\Omega)}-r_1\langle \Phi^k, u^k_1 \rangle_{\L^2(\Omega)}.
\end{align*}
We hence have, for all $k\in\llbracket 1, N\rrbracket$, adding the two previous equations,
\begin{align*}
\hspace{-1cm}\frac{1}{\tau}\langle \Phi^{k+1},u^{k}_1\rangle_{\L^2(\Omega)}-\frac{1}{\tau}\langle \Phi^k,u^{k-1}_1\rangle_{\L^2(\Omega)} &+ \langle \Delta \Phi^k,\big[b^k-d_{11}^\ep(u_1^k)-a_{12}^\ep(u_2^k)\big]u_1^k\rangle_{\L^2(\Omega)} \\&+ \ep \langle \Phi^k- \Delta \Phi^k,w_1^{k}\rangle_{\L^2(\Omega)}= \langle \sqrt{b^k}F^k, u^k_1 \rangle_{\L^2(\Omega)}- \left\langle \Phi^k,R_{12}^{-,\ep}(u_1^{k},u_2^{k}) \right\rangle,
\end{align*}
recalling that $a_{12}^\ep(x) := a_{12}(x)+\ep x$, $a_{11}^\ep(x) := x d^\ep_{11}(x)$, $d_{11}^\ep := \gamma_\ep(d_{11})$, and the decomposition $R_{12}^\ep = R_{12}^+ - R_{12}^{-,\ep}$.
Since $\Phi^k$ is nonnegative, we get, if we denote $c^k:=b^k-d_{11}^\ep(u_1^k)-a_{12}^\ep(u_2^k)$, after summing over $k\in\llbracket 1,N\rrbracket $  (and recalling that $\Phi^{N+1}=0$),
\begin{align}
\label{ineq:sumtot}-\frac{1}{\tau}\langle \Phi^1,u^{0}_1\rangle_{\L^2(\Omega)} +\sum_{k=1}^N \langle \Delta \Phi^k,c^k u_1^k\rangle_{\L^2(\Omega)} + \sum_{k=1}^N\ep \langle \Phi^k,w_1^{k}\rangle_{\H^1(\Omega)}\leq \sum_{k=1}^N\langle \sqrt{b^k}F^k, u^k_1 \rangle_{\L^2(\Omega)}.
\end{align}
\vspace{2mm}
The following approximation Lemma will help us to handle the sequence ${c:=(c^k)_{1\leq k \leq N}}$:

\begin{lem}\label{lem:approxthomas}
There exists a sequence of families $(b_m)_{m\in\N}:=(b^1_m,b^2_m,\dots,b^N_m)_{m\in\N}$ of $\mathscr{C}^\infty(\Omega)$ functions such that
\begin{small}
\begin{align}
\label{thomas1}\forall (k,m)\in\llbracket 1,N\rrbracket\times \N,\quad b^k_m &\geq 1,\\
\label{thomas2}\forall k\in\llbracket 1,N\rrbracket,\quad\int_{\Omega} |b^k_m-d_{11}^\ep(u_1^k)-a_{12}^\ep(u_2^k)|(1+u_1^k) \,\dd x &\operatorname*{\longrightarrow}_{m\rightarrow +\infty} 0,\\
\label{thomas3}\forall k\in\llbracket 1,N\rrbracket,\quad\int_{\Omega} \left|\sqrt{b^k_m}-\sqrt{d_{11}^\ep(u_1^k)+a_{12}^\ep(u_2^k)}\right|(1+u_1^k) \, \dd x &\operatorname*{\longrightarrow}_{m\rightarrow +\infty} 0,\\
\label{thomas4}\Big\|\sqrt{\underline{b}_{\hspace{1pt} m}^{\boldsymbol{\tau}}}\Big\|_{\L^2(Q_T)}^2 \operatorname*{\longrightarrow}_{m\rightarrow +\infty} \sum_{k=1}^N \tau \int_\Omega |d_{11}^\ep(u_1^k)+a_{12}^\ep(u_2^k) |\, \dd x&=\left\|\sqrt{d_{11}^\ep\big(\underline{u}_1^{\boldsymbol{\tau}}\big)+a_{12}^\ep\big({\underline{u}_{\hspace{1pt} 2}^{\boldsymbol{\tau}}}\big)}\right\|_{\L^2(Q_T)}^2.
\end{align}
\end{small}
\end{lem}
\begin{proof}
This Lemma is a rather direct consequence of the regularity of the borelian measure ${(1+u_1^k)\,\dd x}$: we know 
 that $d_{11}^\ep(u_1^k) + a_{12}^\ep(u_2^k)\in \L^1\big(\Omega,(1+u_1^k)\dd x\big)$ and is lower bounded by a strictly positive constant, we may hence approximate it in this space by smooth $\mathscr{C}^\infty(\Omega)$ functions, which are still lower bounded by a strictly positive constant. This gives \eqref{thomas1}--\eqref{thomas2}. Using the mentioned lower bound, we have
\begin{align*}
\left|\sqrt{b^k_m}-\sqrt{d_{11}^\ep(u_1^k)+a_{12}^\ep(u_2^k)}\right| &\leq \left|\sqrt{b^k_m}-\sqrt{d_{11}^\ep(u_1^k)+a_{12}^\ep(u_2^k)}\right| \Big[\sqrt{b^k_m}+\sqrt{d_{11}^\ep(u_1^k)+a_{12}^\ep(u_2^k)}\Big] \\
&= |b^k_m-d_{11}^\ep(u_1^k)-a_{12}^\ep(u_2^k)|,
\end{align*}
so that we also get \eqref{thomas3}. Noticing that
\begin{align*}
\Big\|\sqrt{{\underline{b}_{\hspace{1pt} m}^{\boldsymbol{\tau}}}}\Big\|_{\L^2(Q_T)}^2 = \big\|{\underline{b}_{\hspace{1pt} m}^{\boldsymbol{\tau}}}\big\|_{\L^1(Q_T)} = \sum_{k=1}^N \tau \int_\Omega |b_m^k| \dd x ,
\end{align*}
we see that \eqref{thomas4} is a simple consequence of the convergence \eqref{thomas2}.
\end{proof}
\vspace{1cm}
We now fix a family $F:=(F^k)_{1\leq k \leq N}$ of $\mathscr{C}^\infty(\Omega)$ nonpositive functions, and for each family $b_m$ of the sequence $(b_m)_{m\in\N}$ defined in Lemma \ref{lem:approxthomas}, we define the corresponding family $\Phi_m:=(\Phi_m^1,\Phi_m^2,\dots,\Phi_m^N)$, using Lemma \ref{lem:dual_classic}. The previous estimate \eqref{ineq:sumtot} can now be written
\begin{align*}
-\langle \Phi^1_m,u^{0}_1\rangle_{\L^2(\Omega)} +\sum_{k=1}^N \tau \langle  \Delta \Phi^k_m,c^k_m u_1^k\rangle_{\L^2(\Omega)} + \sum_{k=1}^N \tau\ep \langle \Phi^k_m,w_1^{k}\rangle_{\H^1(\Omega)}\leq \sum_{k=1}^N \tau \langle \sqrt{b^k_m}F^k, u^k_1 \rangle_{\L^2(\Omega)},
\end{align*}
where $c^k_m:=b^k_m-d_{11}^\ep(u_1^k)-a_{12}^\ep(u_2^k)$. Since the right-hand side is nonpositive, we may write
\begin{align*}
\text{$\fbox{1}$}:= \left|\sum_{k=1}^N \tau\langle \sqrt{b^k_m}F^k, u^k_1 \rangle_{\L^2(\Omega)}\right| & \leq\,  \stackrel{\text{$\fbox{2}$}}{\overbrace{\langle \Phi^1_m,u^{0}_1\rangle_{\L^2(\Omega)}}} +\stackrel{\text{$\fbox{3}$}}{\overbrace{\sum_{k=1}^N \tau \left|\langle \Delta \Phi^k_m,c^k_m u_1^k\rangle_{\L^2(\Omega)}\right|}} \\
&+\stackrel{\text{$\fbox{4}$}}{\overbrace{\sum_{k=1}^N \tau \ep \left| \langle \Phi^k_m,w_1^{k}\rangle_{\H^1(\Omega)}\right|}}.
\end{align*}
Using \eqref{ineq:duality_bound3} of Lemma \ref{lem:dual_classic} and \eqref{thomas4}, we get (for a given $\tau, \var$),
\begin{align*}
\left|\text{$\fbox{2}$}\right| &\leq C_\Omega \|\uu^0\|_{\L^2(\Omega)} \Bigg[e^{r_1(\tau) T}+\Big\|\sqrt{{\underline{b}_{\hspace{1pt} m}^{\boldsymbol{\tau}}}}\Big\|_{\L^2(Q_T)}\Bigg]\big\|\underline{F}^{\boldsymbol{\tau}}\big\|_{\L^2(Q_T)}e^{r_1(\tau) T} \\
&\operatorname*{\longrightarrow}_{m\rightarrow +\infty} C_\Omega \|\uu^0\|_{\L^2(\Omega)} \Bigg[e^{r_1(\tau) T}+\left\|\sqrt{d_{11}^\ep\big({\underline{u}_{\hspace{1pt} 1}^{\boldsymbol{\tau}}}\big)+a_{12}^\ep\big({\underline{u}_{\hspace{1pt} 2}^{\boldsymbol{\tau}}}\big)}\right\|_{\L^2(Q_T)}\Bigg]\big\|\underline{F}^{\boldsymbol{\tau}}\big\|_{\L^2(Q_T)}e^{r_1(\tau) T}.
\end{align*}
Using Lemma \ref{lem:dual_classic2}, we get on the other hand
\begin{align*}
\left|\text{$\fbox{3}$}\right| &\leq \tau N  \sup_{1\leq k \leq N} \Big\{\|\Delta \Phi_m^k\|_{\L^\infty(\Omega)} \|c_m^k u_1^k\|_{\L^1(\Omega)}\Big\}\\
&\leq    \tau N C_{\tau,\Omega}\Bigg[1+\Big\|\sqrt{{\underline{b}_{\hspace{1pt} m}^{\boldsymbol{\tau}}}}\Big\|_{\L^2(Q_T)}\Bigg]\big\|\underline{F}^{\boldsymbol{\tau}}\big\|_{\L^\infty(Q_T)}\sup_{1\leq k \leq N} \|c_m^k u_1^k\|_{\L^1(\Omega)}
\operatorname*{\longrightarrow}_{m\rightarrow +\infty} 0,
\end{align*}
using \eqref{thomas2} and \eqref{thomas4} for the convergence. In order to treat $\fbox{4}$, notice that we know, from inequality \eqref{ineq:afterlim1}:
\begin{align*}
\ep \tau \sum_{k=1}^N\|w_1^k\|_{\H^1(\Omega)}^2 \leq K_T (\mathscr{E}_\ep(\uu^0)+1),
\end{align*}
hence, using again \eqref{ineq:duality_bound3} of Lemma \ref{lem:dual_classic}, we get ($\tau N=T$)
\begin{align*}
\left|\text{$\fbox{4}$}\right| &\leq e^{r_1(\tau) T} C_\Omega \Bigg[e^{r_1(\tau) T}+\Big\|\sqrt{{\underline{b}_{\hspace{1pt} m}^{\boldsymbol{\tau}}}}\Big\|_{\L^2(Q_T)}\Bigg]\big\|\underline{F}^{\boldsymbol{\tau}}\big\|_{\L^\infty(Q_T)}  \sum_{k=1}^N \tau \ep \|w_1^k\|_{\H^1(\Omega)}\\
&\leq e^{r_1(\tau) T} C_\Omega \Bigg[e^{r_1(\tau) T}+\Big\|\sqrt{{\underline{b}_{\hspace{1pt} m}^{\boldsymbol{\tau}}}}\Big\|_{\L^2(Q_T)}\Bigg]\big\|\underline{F}^{\boldsymbol{\tau}}\big\|_{\L^\infty(Q_T)}\tau \ep \sqrt{N} \Bigg\{ \sum_{k=1}^N  \|w_1^k\|^2_{\H^1(\Omega)}\Bigg\}^{1/2}\\
&\leq e^{r_1(\tau) T} C_\Omega \Bigg[e^{r_1(\tau) T}+\Big\|\sqrt{{\underline{b}_{\hspace{1pt} m}^{\boldsymbol{\tau}}}}\Big\|_{\L^2(Q_T)}\Bigg]\big\|\underline{F}^{\boldsymbol{\tau}}\big\|_{\L^\infty(Q_T)}\sqrt{ \ep T} \sqrt{K_T (\mathscr{E}_\ep(\uu^0)+1)},
\end{align*}
and we may use again \eqref{thomas4} to get that the last upper bound is converging, as $m$ goes to $+\infty$, to
\begin{align*}
e^{r_1(\tau) T} C_\Omega \Bigg[e^{r_1(\tau) T}+\left\|\sqrt{d_{11}^\ep\big({\underline{u}_{\hspace{1pt} 1}^{\boldsymbol{\tau}}}\big)+a_{12}^\ep\big({\underline{u}_{\hspace{1pt} 2}^{\boldsymbol{\tau}}}\big)}\right\|_{\L^2(Q_T)}\Bigg]\big\|\underline{F}^{\boldsymbol{\tau}}\big\|_{\L^2(Q_T)} \sqrt{ \ep T} \sqrt{K_T (\mathscr{E}_\ep(\uu^0)+1)}.
\end{align*}
Finally, because of \eqref{thomas3}, we can see that $\fbox{1}$ converges, as $m$ goes to $+\infty$, to
\begin{align*}
\left|\sum_{k=1}^N \tau \int_{\Omega} u_1^k\sqrt{d_{11}^\ep\big(u_1^k\big)+a_{12}^\ep(u_2^k)} F^k\dd x \right|= \left|\int_{Q_T} {\underline{u}_{\hspace{1pt} 1}^{\boldsymbol{\tau}}}\sqrt{d_{11}^\ep\big({\underline{u}_{\hspace{1pt} 1}^{\boldsymbol{\tau}}}\big)+a_{12}^\ep\big({\underline{u}_{\hspace{1pt} 2}^{\boldsymbol{\tau}}}\big)}  \underline{F}^{\boldsymbol{\tau}} \dd x\,\dd t\right|.
\end{align*}
All the previous estimates give hence, denoting $\underline{h}^{\boldsymbol{\tau}} := {\underline{u}_{\hspace{1pt} 1}^{\boldsymbol{\tau}}}\sqrt{d_{11}^\ep\big({\underline{u}_{\hspace{1pt} 1}^{\boldsymbol{\tau}}}\big)+a_{12}^\ep\big({\underline{u}_{\hspace{1pt} 2}^{\boldsymbol{\tau}}}\big)}$,
\begin{small}
\begin{align*}
\hspace{-1.5cm}\left| \int_{Q_T} \underline{h}^{\boldsymbol{\tau}} \underline{F}^{\boldsymbol{\tau}} \dd x\,\dd t \right| \leq e^{r_1(\tau) T} C_\Omega \Bigg[e^{r_1(\tau) T}+\left\|\sqrt{d_{11}^\ep\big({\underline{u}_{\hspace{1pt} 1}^{\boldsymbol{\tau}}}\big)+a_{12}^\ep\big({\underline{u}_{\hspace{1pt} 2}^{\boldsymbol{\tau}}}\big)}\right\|_{\L^2(Q_T)}\Bigg]
\end{align*}
\begin{align*}
 \times\, \Big[\sqrt{ \ep T} \sqrt{K_T (\mathscr{E}_\ep(\uu^0)+1)} + \|\uu^0\|_{\L^2(\Omega)}\Big]\big\|\underline{F}^{\boldsymbol{\tau}}\big\|_{\L^2(Q_T)}.
\end{align*}
\end{small}
Since $\underline{h}^{\boldsymbol{\tau}}$ is a step (in time) nonnegative function and the previous holds true for all non-positive smooth (in $x$) step (in time) functions, we have then by duality
\begin{small}
\begin{align*}
\hspace{-1.5cm}\left\|{\underline{u}_{\hspace{1pt} 1}^{\boldsymbol{\tau}}}\sqrt{d_{11}^\ep\big({\underline{u}_{\hspace{1pt} 1}^{\boldsymbol{\tau}}}\big)+a_{12}^\ep\big({\underline{u}_{\hspace{1pt} 2}^{\boldsymbol{\tau}}}\big)}\right\|_{\L^2(Q_T)} \leq e^{2r_1(\tau) T} C_\Omega \Bigg[1+\left\|\sqrt{d_{11}^\ep\big({\underline{u}_{\hspace{1pt} 1}^{\boldsymbol{\tau}}}\big)+a_{12}^\ep\big({\underline{u}_{\hspace{1pt} 2}^{\boldsymbol{\tau}}}\big)}\right\|_{\L^2(Q_T)}\Bigg]
\end{align*}
\begin{align*}
\times\, \Big[\sqrt{ \ep T} \sqrt{K_T (\mathscr{E}_\ep(\uu^0)+1)} + \|\uu^0\|_{\L^2(\Omega)}\Big].
\end{align*}
\end{small}
Hence, in $\L^1$ norms, we get
\begin{small}
\begin{align*}
\hspace{-1.5cm}\left\|{\underline{u}_{\hspace{1pt} 1}^{\boldsymbol{\tau}}}^2 \Big[d_{11}^\ep\big({\underline{u}_{\hspace{1pt} 1}^{\boldsymbol{\tau}}}\big)+a_{12}^\ep({\underline{u}_{\hspace{1pt} 2}^{\boldsymbol{\tau}}}\big)\Big]\right\|_{\L^1(Q_T)} \leq e^{4r_1(\tau) T} C_\Omega \Bigg[1+\left\|d_{11}^\ep\big({\underline{u}_{\hspace{1pt} 1}^{\boldsymbol{\tau}}}\big)+a_{12}^\ep\big({\underline{u}_{\hspace{1pt} 2}^{\boldsymbol{\tau}}}\big)\right\|_{\L^1(Q_T)}\Bigg]
\end{align*}
\begin{align*}
\times\,
 \Big[ \ep T K_T (\mathscr{E}_\ep(\uu^0)+1) + \|\uu^0\|_{\L^2(\Omega)}^2\Big].
\end{align*}
\end{small}
Since $\ep < 1$ and $\displaystyle (r_1(\tau))_\tau \operatorname*{\longrightarrow}_{\tau \rightarrow 0} r_1$,  the previous inequality can be written in a simpler form :
\begin{small}
\begin{align*}
\hspace{-1.5cm}\left\|{\underline{u}_{\hspace{1pt} 1}^{\boldsymbol{\tau}}}^2 \Big[d_{11}^\ep\big({\underline{u}_{\hspace{1pt} 1}^{\boldsymbol{\tau}}}\big)+a_{12}^\ep({\underline{u}_{\hspace{1pt} 2}^{\boldsymbol{\tau}}}\big)\Big]\right\|_{\L^1(Q_T)} \leq D \Bigg[1+\left\|d_{11}^\ep\big({\underline{u}_{\hspace{1pt} 1}^{\boldsymbol{\tau}}}\big)\right\|_{\L^1(\Omega)}+\left\|a_{12}^\ep\big({\underline{u}_{\hspace{1pt} 2}^{\boldsymbol{\tau}}}\big)\right\|_{\L^1(Q_T)}\Bigg],
\end{align*}
\end{small}
where the constant $D$ only depends on $T,\Omega$ and $\uu^0$.
Because of the concavity of $a_{12}^\ep$ (and since it vanishes in $0$), we have, with some constant $C_1, C_2$ \textbf{not depending} on $\ep$, 
\begin{align*}
a_{12}^\ep(x) \leq C_1+ C_2\,\psi_2^\ep(x),
\end{align*}
so that, from \eqref{ineq:afterlim1}, we have
\begin{small}
\begin{align*}
\hspace{-1.5cm}\left\|{\underline{u}_{\hspace{1pt} 1}^{\boldsymbol{\tau}}}^2 \Big[d_{11}^\ep\big({\underline{u}_{\hspace{1pt} 1}^{\boldsymbol{\tau}}}\big)+a_{12}^\ep({\underline{u}_{\hspace{1pt} 2}^{\boldsymbol{\tau}}}\big)\Big]\right\|_{\L^1(Q_T)} \leq D \Bigg[1+\left\|d_{11}^\ep\big({\underline{u}_{\hspace{1pt} 1}^{\boldsymbol{\tau}}}\big)\right\|_{\L^1(\Omega)}+K_T (\mathscr{E}_\ep(\uu^0)+1)\Bigg],
\end{align*}
\end{small}
that we may write (changing the definition of $D$)
\begin{small}
\begin{align*}
\hspace{-1.5cm}\left\|{\underline{u}_{\hspace{1pt} 1}^{\boldsymbol{\tau}}}^2 \Big[d_{11}^\ep\big({\underline{u}_{\hspace{1pt} 1}^{\boldsymbol{\tau}}}\big)+a_{12}^\ep({\underline{u}_{\hspace{1pt} 2}^{\boldsymbol{\tau}}}\big)\Big]\right\|_{\L^1(Q_T)} \leq D \Bigg[1+\left\|d_{11}^\ep\big({\underline{u}_{\hspace{1pt} 1}^{\boldsymbol{\tau}}}\big)\right\|_{\L^1(\Omega)}\Bigg].
\end{align*}
\end{small}
But we have then, since $d_{11}^\ep \leq d_{11}$ which is a non-decreasing function,
\begin{small}
\begin{align*}
\hspace{-1.5cm}\left\|{\underline{u}_{\hspace{1pt} 1}^{\boldsymbol{\tau}}}^2 \Big[d_{11}^\ep\big({\underline{u}_{\hspace{1pt} 1}^{\boldsymbol{\tau}}}\big)+a_{12}^\ep({\underline{u}_{\hspace{1pt} 2}^{\boldsymbol{\tau}}}\big)\Big]\right\|_{\L^1(Q_T)} &\leq D \Bigg[1+\left\|\mathbf{1}_{{\underline{u}_{\hspace{1pt} 1}^{\boldsymbol{\tau}}}\geq \sqrt{2D}}\,d_{11}^\ep\big({\underline{u}_{\hspace{1pt} 1}^{\boldsymbol{\tau}}}\big)\right\|_{\L^1(\Omega)} +  \left\|\mathbf{1}_{{\underline{u}_{\hspace{1pt} 1}^{\boldsymbol{\tau}}}< \sqrt{2D}}\,d_{11}^\ep\big({\underline{u}_{\hspace{1pt} 1}^{\boldsymbol{\tau}}}\big)\right\|_{\L^1(\Omega)}\Bigg] \\
&\leq D \Bigg[1+\frac{1}{2D}\left\| {\underline{u}_{\hspace{1pt} 1}^{\boldsymbol{\tau}}}^2d_{11}^\ep\big({\underline{u}_{\hspace{1pt} 1}^{\boldsymbol{\tau}}}\big)\right\|_{\L^1(\Omega)} + d_{11}(\sqrt{2D})\mu(\Omega)\Bigg],
\end{align*}
\end{small}
so that
\begin{align*}
\hspace{-1.5cm}\left\|{\underline{u}_{\hspace{1pt} 1}^{\boldsymbol{\tau}}}^2 \Big[\frac{1}{2}d_{11}^\ep\big({\underline{u}_{\hspace{1pt} 1}^{\boldsymbol{\tau}}}\big)+a_{12}^\ep({\underline{u}_{\hspace{1pt} 2}^{\boldsymbol{\tau}}}\big)\Big]\right\|_{\L^1(Q_T)} 
&\leq D \Big[1+ d_{11}(\sqrt{2D})\mu(\Omega)\Big],
\end{align*}
that is to say, eq. (\ref{ineq:dual}).
\end{proof}

\section{Proof of the main Theorem} \label{section6}

This section is dedicated to the proof of our main Theorem.
\bigskip

\begin{proof}
Thanks to Proposition \ref{propo:exist2}, we can consider a solution
 ${\underline{u}_{\hspace{1pt} 1}^{\boldsymbol{\tau}}}^{\ep}, {\underline{u}_{\hspace{1pt} 2}^{\boldsymbol{\tau}}}^{\ep}$ to system \eqref{eq:weak_form}.
 For the sake of clarity, we shall denote this solution by ${\underline{u}_{\hspace{1pt} 1}^{\boldsymbol{\tau}}}, {\underline{u}_{\hspace{1pt} 2}^{\boldsymbol{\tau}}}$.
 Let us keep in mind that since we are dealing with the limit $(\tau,\ep)\rightarrow(0,0)$, we shall have to use bounds that are uniform w.r.t. these two parameters. 
 Hence in the sequel, if not mentioned, the term ``bounded'' will always have to be understood as ``uniformly w.r.t. $\tau$ and $\ep$''. We shall only work with  ${\underline{u}_{\hspace{1pt} 1}^{\boldsymbol{\tau}}}$, the study of  ${\underline{u}_{\hspace{1pt} 2}^{\boldsymbol{\tau}}}$ being exactly identical.
\vspace{2mm}\\
We start with a simple computation. Take some real number $\eta>0$ and write
\begin{align*}
{\underline{u}_{\hspace{1pt} 1}^{\boldsymbol{\tau}}}(t+\eta,x)-{\underline{u}_{\hspace{1pt} 1}^{\boldsymbol{\tau}}}(t,x)& = \sum_{k=1}^N \big[\mathbf{1}_{](k-1)\tau,k\tau]}(t+\eta)-\mathbf{1}_{](k-1)\tau,k\tau]}(t)\big]u_1^k(x)\\
&= \sum_{k=1}^N \big[\mathbf{1}_{](k-1)\tau-\eta,k\tau-\eta]}(t)-\mathbf{1}_{](k-1),k\tau]}(t)\big]u_1^k(x).
\end{align*}
Hence, using
\begin{align*}
\mathbf{1}_{](k-1)\tau-\eta,k\tau-\eta]}(t)-\mathbf{1}_{](k-1)\tau,k\tau]}(t) = \mathbf{1}_{](k-1)\tau-\eta,(k-1)\tau]}(t)-\mathbf{1}_{]k\tau-\eta,k\tau]}(t),
\end{align*}
we get
\begin{align*}
{\underline{u}_{\hspace{1pt} 1}^{\boldsymbol{\tau}}}(t+\eta,x)-{\underline{u}_{\hspace{1pt} 1}^{\boldsymbol{\tau}}}(t,x) &= \sum_{k=0}^{N-1} \mathbf{1}_{]k\tau-\eta,k\tau]}(t)u_1^{k+1}(x)-\sum_{k=1}^N\mathbf{1}_{]k\tau-\eta,k\tau]}(t)u_1^k(x)\\
&= \sum_{k=1}^{N-1} \mathbf{1}_{]k\tau-\eta,k\tau]}(t)[u_1^{k+1}(x)-u^k_1(x)] + \mathbf{1}_{]-\eta,0]}(t)u_1^1(x) - \mathbf{1}_{]T-\eta,T]}(t)u_1^N(x).
\end{align*}
 If we denote by $\sigma_\eta$ the translation operator $g(\cdot,x) \mapsto g(\cdot + \eta,x)$, we have, introducing the space $\E_\eta := \L^1([0,T-\eta];\W^{2,\infty}(\Omega)')$, 
\begin{align*}
\| \sigma_\eta {\underline{u}_{\hspace{1pt} 1}^{\boldsymbol{\tau}}} -  {\underline{u}_{\hspace{1pt} 1}^{\boldsymbol{\tau}}} \|_{\E_\eta} \leq \eta\sum_{k=1}^{N-1}  \|u_1^{k+1}-u_1^k\|_{\W^{2,\infty}(\Omega)'}.
\end{align*} 
Now recall that, for all $j\in\llbracket 1, N\rrbracket$,
\begin{align*}
\partial_\tau u_1^j - \Delta\Big([d_{11}^\ep(u_1^j)+a_{12}^\ep(u_2^j)]{u_1^j}\Big) + \ep (w_1^j-\Delta w_1^j)= R_{12}^\ep(\uu^{j}),
\end{align*}
weakly. We then have (taking $j=k+1$), since $\H^1(\Omega) \hookrightarrow \H^{-1}(\Omega) \hookrightarrow \W^{2,\infty}(\Omega)'$, and $\L^1(\Omega)\hookrightarrow \W^{2,\infty}(\Omega)'$ 
\begin{align*}
\hspace{-1cm}\frac{1}{\tau}\|u_1^{k+1} - u_1^{k}\|_{\W^{2,\infty}(\Omega)'} &\leq \left\|[d_{11}^\ep(u_
1^{k+1})+a_{12}^\ep(u_2^{k+1})]u_1^{k+1}\right\|_{\L^1(\Omega)} + \ep C_\Omega \|w_1^{k+1}\|_{\H^1(\Omega)} + \|R^\ep_{12}(\uu^{k+1})\|_{\L^1(\Omega)} \\
&= \left\|\big[d_{11}^\ep(u_1^{k+1})+a_{12}^\ep(u_2^{k+1})\big]u_1^{k+1} \mathbf{1}_{u_1^{k+1}\geq 1}\right\|_{\L^1(\Omega)}  \\
&+ \left\|\big[d_{11}^\ep(u_1^{k+1})+a_{12}^\ep(u_2^{k+1})\big]u_1^{k+1} \mathbf{1}_{u_1^{k+1}< 1}\right\|_{\L^1(\Omega)}\\
&+\ep C_\Omega \|w_1^{k+1}\|_{\H^1(\Omega)} 
+ \|R^{\ep,-}_{12}(\uu^{k+1})\|_{\L^1(\Omega)} + r_1 \| u_1^{k+1}\|_{\L^1(\Omega)}.
\end{align*}
Using $\mathbf{1}_\Omega=\mathbf{1}_{u_1^{k+1}\geq 1} + \mathbf{1}_{u_1^{k+1}<1}$, we have 
\begin{align*}
\hspace{-1.7cm}\left\|[d_{11}^\ep(u_
1^{k+1})+a_{12}^\ep(u_2^{k+1})]u_1^{k+1}\right\|_{\L^1(\Omega)} \leq \left\|u_1^{k+1}\sqrt{d_{11}^\ep(u_
1^{k+1})+a_{12}^\ep(u_2^{k+1})}\right\|_{\L^2(\Omega)}^2 
\end{align*}
\begin{align*}
\, + \mu(\Omega)\sup_{x\in[0,1]} d_{11}^\ep(x)  + \|a_{12}^\ep(u_2^{k+1})\|_{\L^1(\Omega)}.
\end{align*}
Since for $\ep$ small enough, $d_{11}^\ep$ coincides with $d_{11}$ on $[0,1]$, using also the last point of Lemma \ref{lem:ineqconv:approx} to get $a_{12}^\ep \leq D (2 + \psi_2^\ep)$, we have eventually (recalling \eqref{ineq:afterlim1}) for some constant $C$ not depending on $\ep$,
\begin{align*}
\left\|[d_{11}^\ep(u_
1^{k+1})+a_{12}^\ep(u_2^{k+1})]u_1^{k+1}\right\|_{\L^1(\Omega)} &\leq \left\|u_1^{k+1}\sqrt{d_{11}^\ep(u_
1^{k+1})+a_{12}^\ep(u_2^{k+1})}\right\|_{\L^2(\Omega)}^2 +  C K_T(\mathscr{E}_\ep(\uu^0) +1)\\
&\leq \left\|u_1^{k+1}\sqrt{d_{11}^\ep(u_1^{k+1})+a_{12}^\ep(u_2^{k+1})}\right\|_{\L^2(\Omega)}^2 +  C,
\end{align*}
where we change the definition of $C$, so that finally
\begin{align*}
\hspace{-1cm}\big\| \sigma_\eta {\underline{u}_{\hspace{1pt} 1}^{\boldsymbol{\tau}}} -  {\underline{u}_{\hspace{1pt} 1}^{\boldsymbol{\tau}}} \big\|_{\E_\eta} &\leq \eta \sum_{k=1}^{N-1} \tau\left\{ \left\|u_1^{k+1}\sqrt{d_{11}^\ep(u_1^{k+1})+a_{12}^\ep(u_2^{k+1})}\right\|_{\L^2(\Omega)}^2  + C+\ep C_\Omega \|w_1^{k+1}\|_{\H^1(\Omega)} \right\} \\
&+ \eta \sum_{k=1}^{N-1} \tau\left\{ \|R^{\ep,-}_{12}(\uu^{k+1})\|_{\L^1(\Omega)} + r_1 \| u_1^{k+1}\|_{\L^1(\Omega)} \right\} \\ 
&\leq 4 \eta \Big\|{\underline{u}_{\hspace{1pt} 1}^{\boldsymbol{\tau}}}\sqrt{d_{11}^\ep\big({\underline{u}_{\hspace{1pt} 1}^{\boldsymbol{\tau}}}\big)+a_{12}^\ep\big({\underline{u}_{\hspace{1pt} 2}^{\boldsymbol{\tau}}}\big)}\Big\|_{\L^2(Q_T)}^2 + \eta TC + \eta \ep \tau C_\Omega \sum_{k=1}^N\|\ww^k\|_{\H^1(\Omega)}\\
&+ \eta \sum_{k=1}^{N-1} \tau\left\{ \|R^{\ep,-}_{12}(\uu^{k+1})\|_{\L^1(\Omega)} + r_1 \| u_1^{k+1}\|_{\L^1(\Omega)} \right\},
\end{align*}
which, using \eqref{ineq:dual}, \eqref{ineq:afterlim1}, \eqref{ineq:afterlim3} and \eqref{ineq:afterlim4},
 is going to $0$ as $\eta\rightarrow 0$, uniformly w.r.t. $\tau$ and $\ep$.
\vspace{2mm}\\
We also have 
\begin{align*}
|\nabla {\underline{u}_{\hspace{1pt} 1}^{\boldsymbol{\tau}}}|(t,x) = \sum_{k=1}^{N} \mathbf{1}_{](k-1)\tau,k\tau]}(t) |\nabla u^k_1|(x).
\end{align*}
Recall the notation $\beta_\alpha(x):=x^{\frac{1-\alpha}{2}}$. Given $v:\Omega\rightarrow \R_+$ such that $\beta_\alpha(v)\in\H^1(\Omega)$ (and considering for example the sequence $(v+1/m)_{m\in\N}$), we get 
\begin{align*}
\frac{1-\alpha}{2}v^{-\frac{\alpha+1}{2}}\nabla v = \nabla \beta_\alpha(v)
\end{align*}
weakly. Hence, using \eqref{ineq:afterlim2}, we have for all $k\in\llbracket 1,N \rrbracket$: 
\begin{align*}
\tau \sum_{j=1}^k \int_\Omega |u_1^j|^{-\alpha-1}|\nabla u_1^{j}|^2 \dd x \leq  K_T(\mathscr{E}_\ep(\uu^0) +1).
\end{align*}
We know that
\begin{align*}
|\nabla {\underline{u}_{\hspace{1pt} 1}^{\boldsymbol{\tau}}}|(t,x) = \sum_{k=1}^{N} \mathbf{1}_{](k-1)\tau,k\tau]}(t) |u_1^k|^{\frac{\alpha+1}{2}} |u_1^k|^{-\frac{\alpha+1}{2}}|\nabla u_1^{k}|.
\end{align*}
Hence, using the dual estimate \eqref{ineq:dual}, we see that $\nabla {\underline{u}_{\hspace{1pt} 1}^{\boldsymbol{\tau}}}$ is bounded in $\L^{q_\alpha}(Q_T)$,
 where $q_\alpha$ is defined by the equality ($\alpha\in]0,1[$)
\begin{align*}
\frac{\alpha+1}{4} + \frac{1}{2} = \frac{1}{q_\alpha}, 
\end{align*}
that is $q_\alpha=8/(2\alpha+6)>1$. At this stage we obtained, denoting $\E_\eta:=\L^1\big([0,T-\eta];\W^{2,\infty}(\Omega)'\big)$,
\begin{itemize}
\item[$\bullet$] $\| \sigma_\eta {\underline{u}_{\hspace{1pt} 1}^{\boldsymbol{\tau}}} -  {\underline{u}_{\hspace{1pt} 1}^{\boldsymbol{\tau}}} \|_{\E_\eta}$ goes to $0$ with $\eta$ uniformly in $\tau$ and $\ep$,
\item[$\bullet$]$({\underline{u}_{\hspace{1pt} 1}^{\boldsymbol{\tau}}})_{\tau}$ is bounded in $\L^{q_\alpha}\big([0,T];\W^{1,{q_\alpha}}(\Omega)\big)$.
\end{itemize}
We know that $\W^{1,q_\alpha}(\Omega)\hookrightarrow \L^r(\Omega) \hookrightarrow \W^{2,\infty}(\Omega)'$, where the first injection is compact, the second one is continuous, and $r$ denotes any real number of the interval $[1,q^\star_\alpha[$. We may hence use Theorem $5$ of \cite{simon} to get the compactness of $({\underline{u}_{\hspace{1pt} 1}^{\boldsymbol{\tau}}})_{\tau}$ in $\L^{1}\big([0,T];\L^r(\Omega)\big)$.\vspace{2mm}\\
From the definition of ${\underline{u}_{\hspace{1pt} 1}^{\boldsymbol{\tau}}}$:
\begin{align*}
{\underline{u}_{\hspace{1pt} 1}^{\boldsymbol{\tau}}}(t,x):= \sum_{k=1}^N u_1^k(x) \mathbf{1}_{](k-1)\tau,k\tau]} (t),
\end{align*}
we get
\begin{align*}
\sigma_{-\tau}{\underline{u}_{\hspace{1pt} 1}^{\boldsymbol{\tau}}}(t,x):= {\underline{u}_{\hspace{1pt} 1}^{\boldsymbol{\tau}}}(t-\tau,x)&= \sum_{k=1}^N u_1^k(x) \mathbf{1}_{](k-1)\tau,k\tau]} (t-\tau)\\
&= \sum_{k=1}^N u_1^k(x) \mathbf{1}_{]k\tau,(k+1)\tau]} (t)\\
&= \sum_{k=2}^{N+1} u_1^{k-1}(x) \mathbf{1}_{](k-1)\tau,k\tau]} (t).
\end{align*}
We hence obtain the following expression for the rate of growth
\begin{align*}
\frac{{\underline{u}_{\hspace{1pt} 1}^{\boldsymbol{\tau}}}-\sigma_{-\tau}{\underline{u}_{\hspace{1pt} 1}^{\boldsymbol{\tau}}}}{\tau} (t,x)= \sum_{k=1}^N \mathbf{1}_{](k-1)\tau,k\tau]}(t) \partial_\tau u_1^k(x) + \frac{1}{\tau}\mathbf{1}_{]0,\tau]}(t) u^1_1(x) -  \frac{1}{\tau}\mathbf{1}_{]T,T+\tau]}(t) u^N_1(x),
\end{align*}
recalling the definition of the finite difference operator,
\begin{align*}
\partial_\tau u^k_1 := \frac{u^k_1-u_1^{k-1}}{\tau}.
\end{align*}
Now let us fix a test function $\theta \in{\mathscr{D}}([0, T[; \mathscr{C}_\nu^{\infty}(\overline{\Omega}))$.
 We have
\begin{align*}
\int_{0}^T \int_\Omega \frac{{\underline{u}_{\hspace{1pt} 1}^{\boldsymbol{\tau}}}-\sigma_{-\tau}{\underline{u}_{\hspace{1pt} 1}^{\boldsymbol{\tau}}}}{\tau} \cdot \theta (t,x)  \,\dd x\,\dd t = \sum_{k=1}^N \int_{(k-1)\tau}^{k\tau} \langle\partial_\tau u_1^k,  \theta(t)  \rangle\, \dd t + \frac{1}{\tau}\int_0^\tau \langle u^1_1, \theta(t)\rangle\,\dd t,
\end{align*}
where we denote by $\langle \cdot , \cdot\rangle$ the duality bracket $\mathscr{D'}(\Omega)/\mathscr{D}(\Omega)$ (which is simply the integration on $\Omega$ here ...). A change of variable leads to
\begin{align*}
\int_{0}^T \int_\Omega \frac{{\underline{u}_{\hspace{1pt} 1}^{\boldsymbol{\tau}}}-\sigma_{-\tau}{\underline{u}_{\hspace{1pt} 1}^{\boldsymbol{\tau}}}}{\tau} \cdot \theta (t,x)  \,\dd x\,\dd t &= \int_{0}^T \int_\Omega {\underline{u}_{\hspace{1pt} 1}^{\boldsymbol{\tau}}} \cdot \frac{\theta}{\tau} (t,x)  \,\dd x\,\dd t - \int_{-\tau}^{T-\tau} \int_\Omega {\underline{u}_{\hspace{1pt} 1}^{\boldsymbol{\tau}}} \cdot \frac{\sigma_{\tau}\theta}{\tau} (t,x)  \,\dd x\,\dd t\\
&= \int_0^{T-\tau}  \int_\Omega {\underline{u}_{\hspace{1pt} 1}^{\boldsymbol{\tau}}} \cdot \frac{\theta-\sigma_\tau \theta}{\tau} (t,x) \,\dd x\,\dd t \\
&+ \stackrel{\fbox{1}}{\overbrace{\int_{T-\tau}^T \int_\Omega {\underline{u}_{\hspace{1pt} 1}^{\boldsymbol{\tau}}} \cdot \frac{\theta}{\tau} (t,x)  \,\dd x\,\dd t}} - \stackrel{\fbox{2}}{\overbrace{\int_{-\tau}^{0} \int_\Omega {\underline{u}_{\hspace{1pt} 1}^{\boldsymbol{\tau}}} \cdot \frac{\sigma_{\tau}\theta}{\tau} (t,x)  \,\dd x\,\dd t}}.
\end{align*}
Then, $\fbox{1}$ equals to $0$ for $\tau$ small enough because $\theta\in\mathscr{D}([0,T[\times\Omega)$, and
$\fbox{2}$ equals to $0$ for all $\tau$ because of the definition of ${\underline{u}_{\hspace{1pt} 1}^{\boldsymbol{\tau}}}$, so that finally
\begin{align}\label{eq:taux}
 \sum_{k=1}^N \int_{(k-1)\tau}^{k\tau} \langle\partial_\tau u_1^k,  \theta(t)  \rangle\, \dd t + \frac{1}{\tau}\int_0^\tau \langle u^1_1, \theta(t)\rangle\,\dd t &= \int_0^{T-\tau}  \int_\Omega {\underline{u}_{\hspace{1pt} 1}^{\boldsymbol{\tau}}} \cdot \frac{\theta-\sigma_\tau \theta}{\tau} (t,x) \,\dd x\,\dd t.
\end{align}
Since for all $k\in\llbracket 1, N\rrbracket$ we have (in the weak sense)
\begin{align*}
\partial_\tau u_1^k - \Delta\Big(\big[d_{11}^\ep(u_1^k)+a_{12}^\ep(u_2^k)\big]{u_1^k}\Big) + \ep ({w_1}^k-\Delta w_1^k)= R_{12}^\ep(\uu^{k}),
\end{align*}
we also can write, for all $t\in[0,T[$,
\begin{align*}
\langle \partial_\tau u_1^k,\theta(t)\rangle - \Big\langle a_{11}^\ep(u_1^k)+u_1^k a_{12}(u_2^k)+\ep u_1^k u_2^k,\Delta \theta(t)\Big\rangle  + \ep \langle w_1^k, \theta(t)-\Delta \theta(t)\rangle = \langle R_{12}^\ep(\uu^{k}),\theta(t)\rangle,
\end{align*}
so that integrating on $](k-1)\tau,\tau]$ and summing over $k\in\llbracket 1, N\rrbracket$, we get 
\begin{align*}
\sum_{k=1}^N \int_{(k-1)\tau}^{k\tau} \langle\partial_\tau u_1^k,  \theta(t)  \rangle\, \dd t &- \int_0^T \int_\Omega \Big[ a_{11}^\ep\big({\underline{u}_{\hspace{1pt} 1}^{\boldsymbol{\tau}}}\big)+{\underline{u}_{\hspace{1pt} 1}^{\boldsymbol{\tau}}}a_{12}\big({\underline{u}_{\hspace{1pt} 2}^{\boldsymbol{\tau}}}\big)+\ep {\underline{u}_{\hspace{1pt} 1}^{\boldsymbol{\tau}}}{\underline{u}_{\hspace{1pt} 2}^{\boldsymbol{\tau}}} \Big]\cdot \Delta \theta (t,x) \, \dd x\,\dd t \\
&+ \ep \sum_{k=1}^N \int_{(k-1)\tau}^{k\tau}\langle w_1^k,\theta(t)-\Delta\theta(t)\rangle\,\dd t = \int_0^T \int_\Omega R^\ep_{12}(\underline{\uu}^{\boldsymbol{\tau}})(t,x) \theta(t,x)\dd x\,\dd t.
\end{align*}
Using \eqref{eq:taux}, we eventually get
\begin{align*}
 \int_0^{T}  \int_\Omega \mathbf{1}_{[0,T-\tau[}(t){\underline{u}_{\hspace{1pt} 1}^{\boldsymbol{\tau}}} \cdot \frac{\theta-\sigma_\tau \theta}{\tau} (t,x) \,\dd x\,\dd t &- \int_0^T \int_\Omega \Big[a_{11}^\ep\big({\underline{u}_{\hspace{1pt} 1}^{\boldsymbol{\tau}}}\big)+{\underline{u}_{\hspace{1pt} 1}^{\boldsymbol{\tau}}}a_{12}\big({\underline{u}_{\hspace{1pt} 2}^{\boldsymbol{\tau}}}\big)+\ep {\underline{u}_{\hspace{1pt} 1}^{\boldsymbol{\tau}}}{\underline{u}_{\hspace{1pt} 2}^{\boldsymbol{\tau}}} \Big]\cdot \Delta \theta (t,x) \, \dd x\,\dd t \\
&+ \ep \sum_{k=1}^N \int_{(k-1)\tau}^{k\tau}\langle w_1^k,\theta(t)-\Delta\theta(t)\rangle\,\dd t\\
&= \frac{1}{\tau}\int_0^\tau \langle u^1_1, \theta(t)\rangle\,\dd t +  \int_0^T \int_\Omega R^\ep_{12}(\underline{\uu}^{\boldsymbol{\tau}})(t,x)\, \theta(t,x)\,\dd x\,\dd t.
\end{align*}
We now can study the limit $(\tau,\ep)\rightarrow (0,0)$. Thanks to the compactness result that we proved above, we get the existence of $\uu:=(u_1,u_2)\in\L^1\big([0,T];\L^r(\Omega)\big)$ (with $r<q^\star_\alpha$) such that, up to a subsequence, we have
\begin{align*}
({\underline{u}_{\hspace{1pt} 1}^{\boldsymbol{\tau}}})_{\tau,\ep} & \operatorname*{\longrightarrow}_{(\tau,\ep)\rightarrow (0,0)} u_1, \\
({\underline{u}_{\hspace{1pt} 2}^{\boldsymbol{\tau}}})_{\tau,\ep} & \operatorname*{\longrightarrow}_{(\tau,\ep)\rightarrow (0,0)} u_2,
\end{align*}
in $\L^1\big([0,T];\L^r(\Omega)\big)$, and also almost everywhere on $Q_T$. Because of the dual estimate \eqref{ineq:dual} $\Big({\underline{u}_{\hspace{1pt} 1}^{\boldsymbol{\tau}}} \sqrt{a_{12}\big({\underline{u}_{\hspace{1pt} 2}^{\boldsymbol{\tau}}}\big)}\Big)_{\tau,\ep}$ is bounded in $\L^2(Q_T)$.
 Using assumption \textnormal{\textbf{H2}}, we see that $a_{12}$ is at most
linearly growing, so that using again the dual estimate \eqref{ineq:dual} (but inverting the subscripts $1$ and $2$), $\Big(a_{12}\big({\underline{u}_{\hspace{1pt} 2}^{\boldsymbol{\tau}}}\big)\Big)_{\tau,\ep}$ is bounded in $\L^2(Q_T)$. Writing
\begin{align*}
{\underline{u}_{\hspace{1pt} 1}^{\boldsymbol{\tau}}}a_{12}\big({\underline{u}_{\hspace{1pt} 2}^{\boldsymbol{\tau}}}\big) = {\underline{u}_{\hspace{1pt} 1}^{\boldsymbol{\tau}}}\sqrt{a_{12}\big({\underline{u}_{\hspace{1pt} 2}^{\boldsymbol{\tau}}}\big)} \sqrt{a_{12}\big({\underline{u}_{\hspace{1pt} 2}^{\boldsymbol{\tau}}}\big)},
\end{align*}
we see eventually that $\Big({\underline{u}_{\hspace{1pt} 1}^{\boldsymbol{\tau}}}a_{12}\big({\underline{u}_{\hspace{1pt} 2}^{\boldsymbol{\tau}}}\big)\Big)_{\tau,\ep}$ is bounded in $\L^{4/3}(Q_T)$ and we may thus extract a subsequence converging weakly in this space, and whose limit has to be equal to $u_1a_{12}(u_2)$ (because of the previous almost everywhere convergence). 
\vspace{2mm}\\
As for the self-diffusion, the dual estimate \eqref{ineq:dual} ensures that $\Big({\underline{u}_{\hspace{1pt} 1}^{\boldsymbol{\tau}}} \sqrt{d_{11}^\ep\big({\underline{u}_{\hspace{1pt} 1}^{\boldsymbol{\tau}}}\big)}\Big)_{\tau,\ep}$ is bounded in $\L^2(Q_T)$. Since $d_{11}^\ep$ is a nondecreasing function, we infer the boundedness of $\Big(a_{11}^\ep\big({\underline{u}_{\hspace{1pt} 1}^{\boldsymbol{\tau}}}\big)\Big)_{\tau,\ep}=\Big({\underline{u}_{\hspace{1pt} 1}^{\boldsymbol{\tau}}}d_{11}^\ep\big({\underline{u}_{\hspace{1pt} 1}^{\boldsymbol{\tau}}}\big)\Big)_{\tau,\ep}$ in $\L^1(Q_T)$, so that the weak convergence of this sequence can be deduced from its uniform integrability (Dunford-Pettis theorem). Since $d_{11}^\ep \leq d_{11}$ which is a nondecreasing continuous function, introducing the pseudo-inverse $g_{11}(t):=\inf\{x\in\R_+ ;x\,d_{11}(x) \geq t\}$ which is also going to $+\infty$ with $t$, we get
\begin{align*}
\int_0^T \int_\Omega {\underline{u}_{\hspace{1pt} 1}^{\boldsymbol{\tau}}} d_{11}^\ep\big({\underline{u}_{\hspace{1pt} 1}^{\boldsymbol{\tau}}}\big) \mathbf{1}_{ \underline{u}_{\hspace{1pt} 1}^{\boldsymbol{\tau}} d_{11}^\ep({\underline{u}_{\hspace{1pt} 1}^{\boldsymbol{\tau}}})\geq M}(t,x) \dd x \dd t &\leq \int_0^T \int_\Omega {\underline{u}_{\hspace{1pt} 1}^{\boldsymbol{\tau}}} d_{11}^\ep\big({\underline{u}_{\hspace{1pt} 1}^{\boldsymbol{\tau}}}\big) \mathbf{1}_{\underline{u}_{\hspace{1pt} 1}^{\boldsymbol{\tau}}d_{11}({\underline{u}_{\hspace{1pt} 1}^{\boldsymbol{\tau}}})\geq M}(t,x) \dd x \dd t \\
&= \int_0^T \int_\Omega {\underline{u}_{\hspace{1pt} 1}^{\boldsymbol{\tau}}}d_{11}^\ep\big({\underline{u}_{\hspace{1pt} 1}^{\boldsymbol{\tau}}}\big) \mathbf{1}_{{\underline{u}_{\hspace{1pt} 1}^{\boldsymbol{\tau}}}\geq g_{11}(M)}(t,x) \dd x \dd t\\
&\leq \frac{1}{g_{11}(M)} \left\|{\underline{u}_{\hspace{1pt} 1}^{\boldsymbol{\tau}}}\sqrt{d_{11}^\ep\big({\underline{u}_{\hspace{1pt} 1}^{\boldsymbol{\tau}}}\big)}\right\|_{\L^2(Q_T)}^2,
\end{align*}
which indeed goes to $0$ with $M$, uniformly in $\ep,\tau$, thanks to \eqref{ineq:dual}.
\vspace{2mm}\\
We hence get 
\begin{align*}
\int_0^T \int_\Omega \Big[ a_{11}^\ep\big({\underline{u}_{\hspace{1pt} 1}^{\boldsymbol{\tau}}}\big)+{\underline{u}_{\hspace{1pt} 1}^{\boldsymbol{\tau}}}a_{12}\big({\underline{u}_{\hspace{1pt} 2}^{\boldsymbol{\tau}}}\big)\Big]\cdot \Delta \theta (t,x) \, \dd x\,\dd t\operatorname*{\longrightarrow}_{(\tau,\ep)\rightarrow (0,0)} \int_0^T \int_\Omega \Big[a_{11}(u_1)+u_1a_{12}(u_2)\Big]\cdot \Delta \theta (t,x) \, \dd x\,\dd t.
\end{align*}
Since $\theta$ is smooth, we have
\begin{align*}
 \frac{\theta-\sigma_\tau \theta}{\tau} \operatorname*{\longrightarrow}_{\tau\rightarrow 0} -\partial_t \theta,
\end{align*}
uniformly on $Q_T$, and $\mathbf{1}_{[0,T-\tau[}$ converges to $1$ in all $\L^s(Q_T)$ ($s<\infty$), hence
\begin{align*}
 \int_0^{T}  \int_\Omega \mathbf{1}_{[0,T-\tau[}(t){\underline{u}_{\hspace{1pt} 1}^{\boldsymbol{\tau}}} \cdot \frac{\theta-\sigma_\tau \theta}{\tau} (t,x) \,\dd x\,\dd t \operatorname*{\longrightarrow}_{(\tau,\ep)\rightarrow (0,0)}  -\int_0^{T}  \int_\Omega u_1 \cdot \partial_t \theta (t,x) \,\dd x\,\dd t.
\end{align*}
For the reactions terms, since the nonlinearities are always strictly sublinear or dominated by the self diffusion,  one easily manages (using the dual estimate) to use Dunford Pettis criterion. Indeed, first, since $(\gamma_\ep)_\ep$ is increasing to the identity, 
the uniform integrability of $R_{12}^{-,\ep}(\underline{\uu}^{\boldsymbol{\tau}})$ reduces to check this property for both $s_{11}(\underline{u}_{\hspace{1pt} 1}^{\boldsymbol{\tau}})\underline{u}_{\hspace{1pt} 1}^{\boldsymbol{\tau}}$ and $s_{12}(\underline{u}_{\hspace{1pt} 2}^{\boldsymbol{\tau}})\underline{u}_{\hspace{1pt} 1}^{\boldsymbol{\tau}}$. Since $d_{11}$ is nondecreasing, we always have $\displaystyle \lim_{z \rightarrow +\infty}\frac{s_{11}(z)}{z d_{11}(z)}= 0$, so that  using again the pseudo-inverse $f_{11}(t):=\inf\{x\in\R_+ ;x\,s_{11}(x) \geq t\}$, which is also going to $+\infty$ with $t$, we get for $M>0$
\begin{align*}
\int_0^T \int_\Omega s_{11}(\underline{u}_{\hspace{1pt} 1}^{\boldsymbol{\tau}})\underline{u}_{\hspace{1pt} 1}^{\boldsymbol{\tau}} \mathbf{1}_{s_{11}(\underline{u}_{\hspace{1pt} 1}^{\boldsymbol{\tau}})\underline{u}_{\hspace{1pt} 1}^{\boldsymbol{\tau}} \geq M} (t,x) \dd x \dd t& = \int_0^T \int_\Omega s_{11}(\underline{u}_{\hspace{1pt} 1}^{\boldsymbol{\tau}})\underline{u}_{\hspace{1pt} 1}^{\boldsymbol{\tau}} \mathbf{1}_{\underline{u}_{\hspace{1pt} 1}^{\boldsymbol{\tau}} \geq f_{11}(M)} (t,x) \dd x \dd t \\
&\leq \sup_{z \geq f_{11}(M)} \frac{s_{11}(z)}{z d_{11}(z)} \left\|{\underline{u}_{\hspace{1pt} 1}^{\boldsymbol{\tau}}}\sqrt{d_{11}\big({\underline{u}_{\hspace{1pt} 1}^{\boldsymbol{\tau}}}\big)}\right\|_{\L^2(Q_T)}^2,
\end{align*}
which goes to $0$ with $M^{-1}$ uniformly in $\ep,\tau$ thanks to \eqref{ineq:dual}. We used here the fact that estimate \eqref{ineq:dual} is still true if one replaces $d_{11}^\ep$ by $d_{11}$ (Fatou's lemma). For $s_{12}(\underline{u}_{\hspace{1pt} 2}^{\boldsymbol{\tau}})\underline{u}_{\hspace{1pt} 1}^{\boldsymbol{\tau}}$, using
 again the pseudo-inverse,
 it is easy to exhibit some positive function $\ell$ going to $+\infty$ in $+\infty$ such as $s_{12}(\underline{u}_{\hspace{1pt} 2}^{\boldsymbol{\tau}})\underline{u}_{\hspace{1pt} 1}^{\boldsymbol{\tau}} \geq M$ large enough, $\underline{u}_{\hspace{1pt} 2}^{\boldsymbol{\tau}} \leq \ell(M) \Rightarrow \underline{u}_{\hspace{1pt} 1}^{\boldsymbol{\tau}} \geq \ell(M)$. In the case of the previous implication, one may easily use 
the already noticed uniform integrability of $(\underline{u}_{\hspace{1pt} 1}^{\boldsymbol{\tau}})_\tau$ and if $\underline{u}_{\hspace{1pt} 2}^{\boldsymbol{\tau}} \leq \ell(M)$,
 then we have by Young's inequality for any small $\delta >0$, 
$$ u_1\, s_{12}(u_2) \le \delta \, u_1^2\, a_{12}(u_2) + \frac{s_{12}^2(u_2)}{\delta\, a_{12}(u_2)}, $$
 so that we may conclude using \eqref{ineq:dual} and assumption \textbf{H1} :
\begin{align*}
\lim_{z \to +\infty} \frac{s_{12}(z)}{z\, \sqrt{d_{22}(z) + a_{12}(z)} } = 0.
\end{align*} 
\vspace{2mm}\\
For the other terms, first notice that 
\begin{align*}
\left| \ep \sum_{k=1}^N \int_{(k-1)\tau}^{k\tau}\langle w_1^k,\theta(t)-\Delta\theta(t)\rangle\,\dd t\right| &\leq \ep \|\theta-\Delta\theta\|_{\L^\infty([0,T]; \L^2(\Omega))} \sum_{k=1}^N \tau \|w_1^k\|_{\L^2(\Omega)}\\
&\leq  \|\theta-\Delta\theta\|_{\L^\infty([0,T]; \L^2(\Omega))} \ep\tau \sqrt{N}\left\{\sum_{k=1}^N \|w_1^k\|_{\L^2(\Omega)}^2\right\}^{1/2}\\
&\leq \|\theta-\Delta\theta\|_{\L^\infty([0,T]; \L^2(\Omega))} \ep\tau \sqrt{N} \frac{\sqrt{K_T(\mathscr{E}_\ep(\uu^0) +1)}}{\sqrt{\ep\tau}}\\
&= \|\theta-\Delta\theta\|_{\L^\infty([0,T]; \L^2(\Omega))} \sqrt{\ep} \sqrt{T} \sqrt{K_T(\mathscr{E}_\ep(\uu^0) +1)} \operatorname*{\longrightarrow}_{(\tau,\ep)\rightarrow (0,0)} 0,
\end{align*}
where we used \eqref{ineq:afterlim1} for the last but one inequality.\vspace{2mm}\\
We also have
\begin{align*}
\left|\int_0^T \int_\Omega \ep {\underline{u}_{\hspace{1pt} 1}^{\boldsymbol{\tau}}}{\underline{u}_{\hspace{1pt} 2}^{\boldsymbol{\tau}}} \Delta \theta (t,x) \, \dd x\,\dd t \right| \leq \ep \big\|{\underline{u}_{\hspace{1pt} 1}^{\boldsymbol{\tau}}}\big\|_{\L^2(Q_T)} \big\|{\underline{u}_{\hspace{1pt} 2}^{\boldsymbol{\tau}}}\big\|_{\L^2(Q_T)} \|\Delta \theta\|_{\L^\infty(Q_T)},
\end{align*}
which goes to zero with $(\tau,\ep)$ thanks to the dual estimate \eqref{ineq:dual}.\vspace{2mm}\\
Finally because of the continuity of $\theta$, we can write
\begin{align*}
 \frac{1}{\tau}\int_0^\tau \langle u^0_1, \theta(t)\rangle\,\dd t = \int_0^1  \langle u^0_1, \theta(\tau t)\rangle\,\dd t \operatorname*{\longrightarrow}_{(\tau,\ep)\rightarrow (0,0) } \langle u^0_1, \theta(\tau t)\rangle = \int_\Omega u^0_1(x)\theta(0,x)\,\dd x,
\end{align*}
so that it only remains to show 
\begin{align*}
\frac{1}{\tau}\int_0^\tau \langle u_1^1-u_1^0 , \theta(t)\rangle\,\dd t = \int_0^\tau \langle \partial_\tau u^1_1, \theta(t)\rangle\,\dd t \operatorname*{\longrightarrow}_{(\tau,\ep)\rightarrow (0,0) } 0.
\end{align*}
From equation \eqref{eq:weak_form} for $k=1$, $i=1$, $j=2$, we have ($\chi_1 = \theta(t)$)
\begin{align*}
\langle \partial_\tau u_1^1, \theta(t)\rangle- \left\langle \Big[ a_{11}^\ep(u^1_1)+u_1^1 a_{12}(u_2^1)+ \ep u_1^1 u_2^1 \Big],\Delta \theta(t) \right \rangle
 &+\ep \langle w_1^{1},\theta(t)- \Delta \theta(t)\rangle \\
&= \left\langle R_{12}^\ep(u_1^{1},u_2^{1}),\theta(t) \right\rangle,
\end{align*}
that gives, since $\L^1(\Omega)\hookrightarrow \W^{2,\infty}(\Omega)'$, for some positive constant $C_\theta$ depending on $\theta$,
\begin{align*}
\frac{1}{C_\theta}\left|\int_0^\tau \langle \partial_\tau u_1^1, \theta(t)\rangle \dd t\right| &\leq \tau \| a_{11}^\ep(u^1_1)+u_1^1 a_{12}(u_2^1)+ \ep u_1^1 u_2^1+\ep w_1^{1}+ R_{12}^\ep(u_1^{1},u_2^{1})\|_{\L^1(\Omega)}\\
&\leq  \left\|{\underline{u}_{\hspace{1pt} 1}^{\boldsymbol{\tau}}} {d_{11}^\ep\big({\underline{u}_{\hspace{1pt} 1}^{\boldsymbol{\tau}}}\big)}\mathbf{1}_{[0,\tau]\times\Omega}\right\|_{\L^1(Q_T)} + \|{\underline{u}_{\hspace{1pt} 1}^{\boldsymbol{\tau}}}a_{12}\big({\underline{u}_{\hspace{1pt} 2}^{\boldsymbol{\tau}}}\big)\mathbf{1}_{[0,\tau]\times\Omega}\|_{\L^1(Q_T)}\\
&+ \ep \|{\underline{u}_{\hspace{1pt} 1}^{\boldsymbol{\tau}}}\|_{\L^2(Q_T)}^2 + \ep \|{\underline{u}_{\hspace{1pt} 2}^{\boldsymbol{\tau}}}\|_{\L^2(Q_T)}^2 + \ep\tau \sqrt{\mu(\Omega)} \|w_1^1\|_{\L^2(\Omega)} +\|R^\ep_{12}(\underline{\uu}^{\boldsymbol{\tau}}) \mathbf{1}_{[0,\tau]\times\Omega}\|_{\L^1(Q_T)}.
\end{align*}
As shown before,
 all the sequences written in front of the characteristic
 function $\mathbf{1}_{[0,\tau]\times\Omega}$ are uniformly integrable 
(we actually showed this for the self diffusion term, and have some $\L^p(Q_T)$ with $p>1$ for the two others), and they hence vanish with $(\tau,\ep)$. The three other terms also vanish, 
because of the duality bound for the $\|\underline{u}_{\hspace{1pt i}}^{\boldsymbol{\tau}}\|_{\L^2(Q_T)}^2$ terms and because estimate \eqref{ineq:afterlim1} for the $\|w_1^1\|_{\L^2(\Omega)}$ term.
\vspace{2mm}\\
We get then 
\begin{align*}
\int_{Q_T} u_1 \Big\{\partial_t \theta - \big[d_{11}(u_1)+a_{22}(u_2)]\Delta\theta \Big\}\, \dd x\,\dd t =  \int_\Omega u^0_1(x)\theta(0,x)\,\dd x,
\end{align*}
that is the weak formulation on $Q_T$ of the equation
\begin{align*}
\partial_t u_1 -\Delta \Big[ a_{11}(u_1) + u_1 a_{12}(u_2)\Big] = R_{12}(u_1,u_2),
\end{align*}
initialized with $u_1(0,x) = u_1^0(x)$. A similar (symmetric) formulation holds for $u_2$.
\bigskip

We end up the proof of our Theorem with the passage to the limit in the duality estimates.
First notice that the constant $D$ in \eqref{ineq:dual} can be written as a
polynomial function (with positive coefficients) of $(\|\uu^0\|_{\L^1(\R)}, \mathscr{E}_\ep(\uu^0))$. Since $({\underline{u}_{\hspace{1pt} 1}^{\boldsymbol{\tau}}}\sqrt{d_{11}^\ep\big({\underline{u}_{\hspace{1pt} 1}^{\boldsymbol{\tau}}}\big)+a_{12}^\ep\big({\underline{u}_{\hspace{1pt} 2}^{\boldsymbol{\tau}}}\big)})_{(\ep,\tau)}$ converges almost everywhere to $u_1 \sqrt{d_{11}(u_1)+a_{12}(u_2)}$, and since $\mathscr{E}_\ep(\uu^0)$ converges to $\mathscr{E}(\uu^0)$, the classical weak estimate ensures that
\begin{align*}
\left\|u_1 \sqrt{d_{11}(u_1)+a_{12}(u_2)}\right\|_{\L^2(Q_T)}\leq \P(\|\uu^0\|_{\L^1(\Omega)},\mathscr{E}(\uu^0)), 
\end{align*}
for some polynomial function $\P$ with positive coefficients. 
\end{proof}

\bibliographystyle{abbrv}
\bibliography{DLM}

\end{document}